\documentclass{amsart}
\usepackage{amsmath, amsthm, amssymb, amsfonts}
\usepackage{graphicx, tikz, units, url}
\usepackage{color}

\usepackage{mathrsfs} 
\usepackage{bm,bbm} 
\usepackage{colonequals} 
\usepackage{physics} 
\usepackage{extarrows} 
\usepackage{mathtools} 

\usepackage{enumitem}
\usepackage{scalerel} 
\usepackage{stackengine}

\usepackage[colorlinks,citecolor=blue,linkcolor=blue,backref=page]{hyperref}
\usepackage[nameinlink]{cleveref}

\newcommand{\sA}{\mathcal{A}}

\newcommand{\sN}{\mathcal{N}}

\newcommand{\sR}{\mathcal{R}}
\newcommand{\sQ}{\mathcal{Q}}
\newcommand{\sV}{\mathcal{V}}

\newcommand{\N}{\mathbb{N}}
\newcommand{\Z}{\mathbb{Z}}

\newcommand{\R}{\mathbb{R}}

\newcommand{\D}{\mathbb{D}}

\DeclareMathOperator{\diam}{Diam}

\theoremstyle{plain}
\newtheorem{ENPdefinition}{Definition}

\newtheorem{theorem}{Theorem}[section] 
\newtheorem{lemma}[theorem]{Lemma} 
\newtheorem{proposition}[theorem]{Proposition} 
\newtheorem{corollary}[theorem]{Corollary}

\newtheorem*{construction*}{Construction}
\newtheorem*{theorem*}{\bf Theorem}
\newtheorem*{corollary*}{Corollary}

\newtheorem*{observation*}{Observation}
\newtheorem*{claim*}{Claim}

\theoremstyle{plain}

\newtheorem*{remark*}{Remark}
\newtheorem*{question*}{Question}

\theoremstyle{definition}
\newtheorem{definition}[theorem]{Definition} 
\newtheorem*{def*}{Definition}
\allowdisplaybreaks
\numberwithin{equation}{section}
\begin{document}

\title[Eigenvalue estimates and P\'olya's conjecture]
{New Qualitative and Quantitative Estimates for  Neumann Eigenvalues toward P\'olya's Conjecture}

\author{Xiang He, Renjin Jiang, Fanghua Lin and Zuoqin Wang}
	\thanks{X. He was partially supported by the China Postdoctoral Science Foundation No. 2024M761591 \& NNSFC No. 12601107; R. Jiang was partially supported by NNSFC No. 12526205 \& No. 12471094; F. Lin was
partially supported by National Science Foundation grant DMS2247773.; Z. Wang was partially supported by NNSFC No. 12571064. }
\address{Yau Mathematical Sciences Center\\
	Tsinghua University\\
	Beijing, 100084\\ P.R. China\\}
\email{x-he@mail.tsinghua.edu.cn}

\address{Academy for Multidisciplinary Studies\\
	Capital Normal University\\
	Beijing, 100084\\ P.R. China\\}
\email{rejiang@cnu.edu.cn}

\address{Courant Institute of Mathematical Sciences\\
251 Mercer Street, New York, NY 10012, USA}
\email{linf@cims.nyu.edu}

\address{School of Mathematical Sciences\\
	University of Science and Technology of China\\
	Hefei, 230026\\ P.R. China\\}
\email{wangzuoq@ustc.edu.cn}

\begin{abstract}
	We introduce the Enhanced Neumann P\'olya inequality, which is stronger than P\'olya’s conjecture, and verify it for many classes of domains, including strip-tiling domains, Euclidean balls, thin product domains, and certain higher-dimensional tiling domains. We develop a geometric removal approach for the Enhanced Neumann P\'olya inequality.
	In particular, our results provide new classes of domains satisfying the Neumann part of P\'olya's conjecture.
\end{abstract}

\maketitle

\section{Introduction}

\subsection{Geometric removal method  toward P\'olya's Conjecture}
Let $\Omega$ be a bounded domain in $\R^n$. The Dirichlet Laplacian on $\Omega$ has a discrete spectrum
\[
0< \lambda_1(\Omega)< \lambda_2(\Omega)\leq \lambda_3(\Omega) \leq \cdots \nearrow +\infty.
\]
Assume throughout that $\Omega$ has Lipschitz boundary, or that  the embedding $H^1(\Omega) \subset L^2(\Omega)$ is compact. Then the Neumann Laplacian on $\Omega$ also has a discrete spectrum,
\[
0= \mu_0(\Omega)< \mu_1(\Omega)\leq \mu_2(\Omega) \leq \cdots \nearrow +\infty.
\]	
Denote the Dirichlet eigenvalue counting function by
\[
\sN^D_\Omega(\lambda):=\#\{k:\ \lambda_k(\Omega)< \lambda\},
\]
and the Neumann eigenvalue counting function by
\[
\sN^N_\Omega(\lambda):=\#\{k:\ \mu_k(\Omega)< \lambda\}.
\]

In 1954, P\'olya \cite{Pol54} conjectured that for any $k\ge 1$,
\[
\lambda_k(\Omega)\ge \frac{4\pi^2}{(\omega_n |\Omega|)^{2/n}}k^{\frac 2n}\ge\mu_{k}(\Omega).
\]
To reformulate this conjecture in terms of the  eigenvalue counting functions, we first introduce the general semiclassical constants. For any $\gamma\ge0$ and $n\ge0$, let
\begin{equation}\label{Lgammad}
	L_{\gamma, n}= \frac{\Gamma(\gamma+1)}{(4\pi)^{n/2} \Gamma(\gamma+1+n/2)}.
\end{equation}
It is straightforward to verify that these constants satisfy the product formula
\begin{equation}\label{Cd1d2}
	L_{0,m} L_{\frac{m}{2},n} = L_{0,m+n}, \qquad \forall  m,n \ge 0.
\end{equation}
P\'olya's conjecture can be equivalently formulated as
\begin{equation}\label{PolyaConj}
\sN^D_{\Omega}(\lambda)\le L_{0,n}|\Omega|\lambda^{\frac n2}\le \sN^N_{\Omega}(\lambda), \qquad \forall \lambda>0.
\end{equation}

In this paper, we focus on the Neumann part of P\'olya's conjecture. Since every bounded domain in $\R$ satisfies P\'olya's conjecture,  we assume throughout that ${n\ge 2}$.
  
To state our main results, we first introduce the \ref{ENP} inequality, which provides an abstract framework for the Neumann part of P\'olya's conjecture and serves as a unified quantitative condition for  establishing P\'olya's conjecture for Neumann eigenvalues via geometric removal.
\begin{ENPdefinition}\label{ENP}
	We say that a bounded domain $\Omega \subset \mathbb R^n$ satisfies the
	\emph{Enhanced Neumann P\'olya inequality} (the \emph{\ref{ENP} inequality} for short)
	if there exist positive constants $C_1(\Omega)$ and
	$C_2(\Omega)$ such that 
	\[
	\sN_\Omega^N(\lambda) \ge
	L_{0,n}|\Omega|\lambda^{\frac n2}
	+ 	\max\left( 	C_1(\Omega)\lambda^{\frac{n-1}2}, C_2(\Omega) \right), \qquad \forall \lambda>0.
	\]
\end{ENPdefinition}

Although the \ref{ENP} inequality is strictly stronger than P\'olya's conjecture, it is satisfied by a broad class of domains. In particular,as shown in Sections~\ref{sec3} and~\ref{app}, it holds for many domains for which P\'olya's conjecture has already been established.

Our first result establishes the stability of the \ref{ENP} inequality under a geometric removal procedure. As a consequence, this method yields a large class of Euclidean domains for which the Neumann part of P\'olya's conjecture holds. This result may be viewed as a natural but nontrivial extension of the recent progress on the Dirichlet case obtained in \cite{JL25}. The corresponding result in \cite[Theorem 5.2]{HW24} requires an additional geometric restriction on the location of the obstacle, whereas our current result removes this restriction entirely.

\begin{theorem}\label{general}
	Let $\Omega\subset\mathbb{R}^n$ be a bounded domain satisfying the \ref{ENP} inequality, and let $U\subset\mathbb{R}^n$ be a bounded domain with smooth boundary. Then there exists a constant $a_0>0$, depending only on $\Omega$ and $U$, such that, for every $a\in(0,a_0)$ and every $x\in\mathbb{R}^n$ satisfying $\overline{x+aU}\subset\Omega$,  
	the domain 
	\[
	\Omega'=\Omega\setminus\overline{x+aU}
	\]
	also satisfies the \ref{ENP} inequality. In particular, the Neumann eigenvalues of $\Omega'$ satisfy P\'olya's conjecture.
\end{theorem}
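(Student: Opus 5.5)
The plan is to compare Neumann spectra of $\Omega'$ and $\Omega$ through a test-function (min-max) argument. Since $\sN^N_{\Omega'}(\lambda)$ is the maximal dimension of a subspace of $H^1(\Omega')$ on which $\int|\nabla u|^2<\lambda\int u^2$, I would take the span of Neumann eigenfunctions of $\Omega$ with eigenvalue $<\lambda$, restricted to $\Omega'$, and show that this restriction loses only a controlled number of dimensions. The volume term changes by $L_{0,n}a^n|U|\lambda^{n/2}$, and this loss must be absorbed by the \ref{ENP} surplus $\max(C_1\lambda^{(n-1)/2},C_2)$. I would split into a low-frequency regime $\lambda\le \Lambda(a)$ and a high-frequency regime $\lambda>\Lambda(a)$.

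\emph{High frequency.} Here I would localize. Decompose $\Omega'$ into $\Omega\setminus\overline{B(x,Ra)}$ and a collar $B(x,Ra)\setminus\overline{x+aU}$, using Neumann bracketing: cutting $\Omega'$ along a sphere only decreases Neumann eigenvalues, so $\sN^N_{\Omega'}\ge$ the sum of the counting functions. That bracketing inequality points the wrong way, however. The correct direction is that imposing extra Neumann conditions on an interior cut enlarges the form domain, which gives an upper bound on $\sN^N_{\Omega'}$, not a lower bound. So instead I would use a Dirichlet–Neumann sandwich in a more quantitative form. Rescaling by $a$, the collar region $B(0,R)\setminus \overline U$ is a fixed domain with smooth boundary. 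Its Neumann counting function satisfies the two-term Weyl law with a positive boundary term $\tfrac14 L_{0,n-1}|\partial U|\mu^{(n-1)/2}$ plus $o(\mu^{(n-1)/2})$, and this extra boundary contribution from $\partial U$ is favorable. The idea is to glue: take $C_1(\Omega)\lambda^{(n-1)/2}$-many surplus modes of $\Omega$ supported away from $x$. This requires a version of the \ref{ENP} surplus that survives multiplication by a cutoff vanishing on $B(x,2Ra)$. The cost of that cutoff, measured by a Hardy/capacity estimate, is of order $\lambda^{n/2}a^n$ plus gradient errors of order $(a\sqrt\lambda)^{n-2}$ in terms of dimension count. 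For $\lambda\ge \Lambda(a):=(Ka)^{-2}$ these are $\ll C_1\lambda^{(n-1)/2}$ only when $a^n\lambda^{1/2}\ll 1$. Because this fails as $\lambda\to\infty$, I would, in the truly semiclassical range $a\sqrt\lambda\gg1$, abandon the perturbative viewpoint. Instead I would use the uniform two-term lower bound on the rescaled obstacle collar, in which the boundary term $|\partial U|a^{n-1}\lambda^{(n-1)/2}$ beats the removed volume $a^n|U|\lambda^{n/2}$ only up to $\lambda\sim a^{-2}$. Beyond that point, the removed volume must be compensated by the genuinely new Neumann modes living near $\partial(x+aU)$, via a Weyl-type lower bound for $\sN^N_{\Omega'}$ with an explicit error that is uniform in $a$ after scaling.

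\emph{Low frequency} ($\lambda\le C/a^2$, but in particular $\lambda$ bounded). Here the Neumann spectrum of $\Omega'$ converges to that of $\Omega$ as $a\to0$, by the standard small-hole result (Rauch–Taylor type, valid for $n\ge2$ with Neumann data). This gives $\mu_k(\Omega')\le\mu_k(\Omega)+o(1)$ uniformly for $k\le k_0$. Combined with the constant $C_2(\Omega)$, this handles every bounded range of $\lambda$ once $a_0$ is small. For $\lambda$ large but with $a\sqrt\lambda\le \varepsilon$, the restriction-plus-cutoff argument above applies: the dimension lost is $O((a\sqrt\lambda)^{n-2}+a^n\lambda^{n/2})\cdot O(1)$, which is $\le \tfrac12 C_1\lambda^{(n-1)/2}$.

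\emph{Main obstacle.} The intermediate range $a\sqrt\lambda\approx1$ and the fully semiclassical range are where the argument is hardest. The key step is a uniform-in-$a$ lower bound for $\sN^N_{\Omega'}$ whose error is $o(\lambda^{(n-1)/2})$ relative to the surplus. I would first try to obtain it by a partition-of-unity (IMS-type) argument on the min-max side. Using $\Omega$'s eigenfunctions times a cutoff away from the hole, together with rescaled Neumann eigenfunctions of the model exterior annulus $B(0,R)\setminus \overline U$, produces nearly orthogonal test spaces whose dimensions add. The \ref{ENP} surplus of $\Omega$ and the positive boundary term of the obstacle then jointly dominate the removed volume $L_{0,n}a^n|U|\lambda^{n/2}$.
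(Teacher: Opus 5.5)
There is a genuine gap: the high-frequency regime is never actually proved. For it, you list several possible routes: cutoffs with a Hardy/capacity cost, a uniform-in-$a$ two-term Weyl law for the rescaled collar $B(0,R)\setminus\overline U$, and an IMS-type gluing with model-annulus modes. You leave the ``main obstacle'' open, and none of the claimed dimension counts (e.g.\ the loss $O((a\sqrt\lambda)^{n-2}+a^n\lambda^{n/2})$) is justified. The intermediate range $\Lambda_0\le\lambda\le\varepsilon^2/a^2$ relies on that same unproved estimate.

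More seriously, the plan rests on a misconception. You repeatedly say the removed volume $L_{0,n}a^n|U|\lambda^{n/2}$ must be absorbed by the surplus, or ``jointly dominated'' by the surplus and the obstacle's boundary term. For fixed $a$ this is impossible as $\lambda\to\infty$, since $a^n\lambda^{n/2}\gg\lambda^{(n-1)/2}$. It is also unnecessary: the target is $L_{0,n}|\Omega'|\lambda^{n/2}$ with $|\Omega'|=|\Omega|-a^n|U|$, so the removed volume is already subtracted. Only boundary-order errors need to be absorbed by $C_1(\Omega)$.

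The missing idea is the Neumann bracketing you discarded, applied to the other configuration. You correctly noted that cutting with Neumann conditions bounds the counting function of the \emph{whole} from above. So cut $\Omega$, not $\Omega'$, into $\Omega'$ and the hole $x+aU$. Lemma~\ref{cover} with $h=1$ gives $\sN^N_\Omega(\lambda)\le\sN^N_{\Omega'}(\lambda)+\sN^N_U(a^2\lambda)$. Combine this with Seeley's bound $\sN^N_U(\mu)\le L_{0,n}|U|\mu^{n/2}+\alpha(U)\mu^{(n-1)/2}+1$ and the \ref{ENP} inequality for $\Omega$:
\[
\sN^N_{\Omega'}(\lambda)\ge L_{0,n}|\Omega'|\lambda^{\frac n2}+\bigl(C_1(\Omega)-\alpha(U)a^{n-1}\bigr)\lambda^{\frac{n-1}2}-1,\qquad \forall\lambda>0.
\]
This holds uniformly in $x$, and the volume terms cancel exactly. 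Once $a^{n-1}<C_1(\Omega)/(2\alpha(U))$, it yields the \ref{ENP} bound for all $\lambda>\Lambda_1:=(4/C_1(\Omega))^{2/(n-1)}$, a threshold independent of $a$.

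That independence is what makes the low-frequency step work. On $(0,\Lambda_1]$ only the fixed finite family $\varphi_0,\dots,\varphi_{N_1-1}$ of eigenfunctions of $\Omega$ is involved. Restricting their span to $\Omega'$, with $\int_{x+aU}f^2\le a^n|U|\Phi^2\bigl(\sum_j|\alpha_j|\bigr)^2$, gives $\mu_i(\Omega')\le\mu_i(\Omega)/(1-\epsilon)$ (Lemma~\ref{genhssta}), and $C_2(\Omega)$ absorbs the loss.

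Your bounded-$\lambda$ step is essentially this. However, your low-frequency range $\lambda\le C/a^2$ grows as $a\to0$, so that finite-dimensional argument cannot cover it. Also, appealing to Rauch--Taylor convergence does not by itself give the required uniformity in the position $x$, since the hole may approach $\partial\Omega$. The elementary one-sided restriction bound does give it, because it depends only on $|x+aU|=a^n|U|$.
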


Although the theorem is stated for a connected domain $U$ with smooth boundary, the connectedness assumption is inessential. Indeed, by applying the theorem iteratively, one may successively remove finitely many sufficiently small holes, regardless of their shapes or locations.  The smoothness assumption on $U$ is imposed only to ensure the applicability of Seeley's asymptotic formula (cf. Lemma \ref{Seeasy}) for the eigenvalue counting function. Consequently, it may be replaced by any regularity condition under which the required asymptotic estimate remains valid. In particular, the same argument applies to polygons and, more generally, to any class of domains for which the corresponding eigenvalue-counting asymptotics are available. We also point out that the assumption   $\overline{x+aU} \subset \Omega$ may be weakened to ${x+aU}\subset\Omega$, thus allowing the boundary of the removed hole to touch $\partial\Omega$,  provided that the resulting domain $\Omega'=\Omega\setminus\overline{x+aU}$ has purely discrete Neumann spectrum.

\begin{remark*}
    Theorem \ref{general} admits a Dirichlet analogue. More precisely, we say that a bounded domain $\Omega$ satisfies the \emph{EDP inequality} if  there exists a constant $C(\Omega)>0$ such that 
	\begin{equation}\label{AssDiri}
	\sN_\Omega^D(\lambda) \le
	L_{0,n}|\Omega|\lambda^{\frac n2}
	\left(1-C(\Omega)\lambda^{-\frac12}\right)_+, \qquad  \forall \lambda>0.
	\end{equation}
	By an argument essentially identical to that used in \cite[Theorem 1.9]{JL25}, one can show that the EDP inequality is also stable under the removal of sufficiently small holes. More precisely, for any $U$, there exists a constant $a_0>0$, depending only on $\Omega$ and $U$, such that, for every $a\in(0,a_0)$ and every $x\in\mathbb R^n$ satisfying	$x+aU\subset\Omega$, 
	the domain 	$\Omega\setminus\overline{(x+aU)}	$  also satisfies the EDP inequality.  We omit the details and focus only on the Neumann case in this paper.
\end{remark*}
 
A number of stability results for Neumann eigenvalues under the removal of a small interior hole have already been established in the literature; see, for example, \cite{RT75,LNS11,Kab20,LM12,FLO25}. However, the available results typically impose geometric restrictions on the location of the hole and therefore do not apply to the setting of Theorem~\ref{general}, where the hole may be placed arbitrarily inside $\Omega$. To overcome this difficulty, we establish in Lemma~\ref{genhssta} a tailored one-sided comparison between the Neumann eigenvalues of $\Omega$ and $\Omega'$, which serves as the key ingredient in the proof.

Theorem \ref{general} is essentially qualitative, since our method does not yield explicit geometric constants (cf. the formula \eqref{eq:theexpressionofa}). The next theorem studies a special case in which the removed subsets are cubes, and gives a quantitative refinement in terms of their total surface area and volume. Although the thresholds  $A(M,\Omega)$ and $V(M,\Omega)$ remain implicit for a general domain, they can be computed explicitly via geometric data when $\Omega$ is a cuboid; see  Theorem \ref{thm:cuboid}.

Throughout the paper, for any cube $Q$, we denote by
\[
\ell(Q):=\mbox{the side length of a cube} \ Q.
\]

\begin{theorem}\label{mthm}
    Let $\Omega\subset \R^n$ be a bounded domain satisfying the \ref{ENP} inequality. For any $M\in\Z_+$, let $\{Q_i\}_{i=1}^M$ be a family of pairwise disjoint open cubes whose closures are contained in $\Omega$. Then there exist positive constants $A(M,\Omega)$ and $V(M,\Omega)$ such that, if
    \[
    \sA_{\sQ}:=\sum_{k=1}^M \ell(Q_k)^{n-1}<A(M,\Omega),\qquad
    \sV_{\sQ}:=\sum_{k=1}^M \ell(Q_k)^n<V(M,\Omega),
    \]
    the Neumann Laplacian eigenvalues of
    \[\Omega'=\Omega\setminus \overline{\cup_{k=1}^M Q_k}\]
    satisfy P\'olya's conjecture.
\end{theorem}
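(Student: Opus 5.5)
The natural route is a Neumann bracketing / domain-decomposition lower bound for $\sN^N_{\Omega'}$ in terms of $\sN^N_\Omega$, with the cubes' own Neumann spectra entering with a minus sign. The key inequality I would establish first is
\[
\sN^N_{\Omega'}(\lambda)\ \ge\ \sN^N_{\Omega}(\lambda)-\sum_{k=1}^M \sN^D_{Q_k}(\lambda)-E(\lambda),
\]
where $E$ is an error term controlled by the cubes' boundary data. Cubes are convenient because their Dirichlet and Neumann spectra are explicit. The inequality should come from a one-sided comparison in the spirit of the tailored lemma mentioned for Theorem~\ref{general} (Lemma~\ref{genhssta}).

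Concretely, I would take the span $W$ of the Neumann eigenfunctions of $\Omega$ with eigenvalue $<\lambda$. I would restrict these functions to $\Omega'$ and use the min-max characterization $\sN^N_{\Omega'}(\lambda)\ge\dim\{u|_{\Omega'}:u\in W,\ \int_{\Omega'}|\nabla u|^2<\lambda\int_{\Omega'}u^2\}$. The restriction fails to have Rayleigh quotient $<\lambda$ only on a subspace detected by the energy and mass carried inside $\cup Q_k$. On a cube $Q_k$, a function in $W$ with $\int_{Q_k}|\nabla u|^2\ge\lambda\int_{Q_k}u^2$ essentially costs one dimension per Neumann eigenvalue of $Q_k$ below a multiple of $\lambda$. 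This gives a loss of at most $\sum_k\sN^N_{Q_k}(c\lambda)$, up to an adjustment near the cube boundaries.

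I would then use the explicit lattice-point bound for cubes,
\[
\sN^N_{Q}(\lambda)\le L_{0,n}\ell(Q)^n\lambda^{n/2}+C_n\sum_{j=0}^{n-1}\ell(Q)^{j}\lambda^{j/2},
\]
whose leading part $L_{0,n}\ell(Q)^n\lambda^{n/2}$ exactly cancels the volume defect $|\Omega|-|\Omega'|=\sV_\sQ$. The constant $c$ must therefore be $1$ in the leading term, so the comparison has to be sharp to leading order; bracketing with the right boundary conditions should give this.

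The remainder is bounded by
\[
C_n\Big(\sA_\sQ\lambda^{\frac{n-1}2}+\sum_{j=1}^{n-2}\sum_k \ell(Q_k)^j\lambda^{j/2}+M\Big).
\]
The ENP inequality for $\Omega$ supplies $\max(C_1\lambda^{\frac{n-1}2},C_2)$ to absorb this. The main obstacle is the regime splitting, so I would split into high and low frequencies.
\begin{enumerate}
\item For large $\lambda$, the top-order term is absorbed by $C_1\lambda^{\frac{n-1}2}$ once $\sA_\sQ$ is small. The intermediate terms are handled by H\"older's inequality, $\sum_k\ell_k^j\le M^{1-j/(n-1)}\sA_\sQ^{j/(n-1)}$, and interpolation between $\lambda^{\frac{n-1}2}$ and constants.
\item For small $\lambda$, specifically $\lambda<\pi^2/\max_k\ell(Q_k)^2$, the Dirichlet count of each cube vanishes, so no dimension is lost. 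Only the volume term $L_{0,n}\sV_\sQ\lambda^{n/2}$ must then be compared with $\max(C_1\lambda^{\frac{n-1}2},C_2)$, which holds if $\sV_\sQ$ is small.
\end{enumerate}
The additive $M$ term is exactly why the thresholds depend on $M$. I expect the hardest step to be establishing the comparison inequality with leading constant exactly $1$ and cube-boundary-sized error, for arbitrarily placed cubes. If the direct argument fails, my fallback is to extend functions from $\Omega'$ into each cube by a bounded reflection-type extension and exploit the cube geometry.

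Finally, I would choose $A(M,\Omega)$ and $V(M,\Omega)$ so that all the absorptions above hold. This gives $\sN^N_{\Omega'}(\lambda)\ge L_{0,n}|\Omega'|\lambda^{n/2}$, which is the Neumann part of P\'olya's conjecture for $\Omega'$.
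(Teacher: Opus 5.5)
\textbf{The gap is in your low-frequency step.} The claim that for $\lambda<\pi^2/\max_k\ell(Q_k)^2$ ``the Dirichlet count of each cube vanishes, so no dimension is lost'' does not follow from your argument. In general it is false.

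In your restriction argument you need $\int_{Q_k}|\nabla u|^2\ge\lambda\int_{Q_k}u^2$ on every cube. Poincar\'e on $Q_k$ only controls $u-\overline{u}_{Q_k}$, so you must still kill the mean of $u$ on each $Q_k$. Hence the correct subtracted quantity is $\sN^N_{Q_k}(\lambda)\ge 1$, not $\sN^D_{Q_k}(\lambda)$, and you lose $M$ dimensions at every $\lambda>0$. Your unspecified $E(\lambda)$ is therefore at least $M$.

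No cleverer comparison removes this loss. Neumann counting functions are not monotone under cutting holes. A small hole at a point where an eigenfunction has $u\neq 0$ and $\nabla u=0$ raises that eigenvalue: to first order the removed mass dominates the removed energy. So $\sN^N_{\Omega'}(\lambda)<\sN^N_\Omega(\lambda)$ just above $\mu_k(\Omega)$, however large $\pi^2/\ell(Q_k)^2$ is. A loss of $M$ (or even $1$) cannot be absorbed by $C_2(\Omega)$, which may be smaller than $1$. As a result, $\sV_\sQ$ plays no real role in your argument, whereas in the theorem it is essential.

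\textbf{What is missing is the paper's quantitative low-frequency step.} Your high-frequency part is fine. Bracketing together with Lemma~\ref{upbdIn} handles all $\lambda>\Lambda_2:=(2M/C_1(\Omega))^{2/(n-1)}$ once $\sA_\sQ$ is small. The point is that this threshold does not depend on the cube sizes.

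For $\lambda\le\Lambda_2$, only the $N_2=\sN^N_\Omega(\Lambda_2)$ eigenfunctions of $\Omega$ below $\Lambda_2$ matter; let $\Phi$ be the maximum of their $L^\infty$ norms. Restrict $f\in\mathrm{span}\{\varphi_0,\dots,\varphi_i\}$ to $\Omega'$. The lost mass satisfies $\int_{\cup Q_k}f^2\le\sV_\sQ\Phi^2(i+1)\int_\Omega f^2$, which gives $\mu_i(\Omega')\le\mu_i(\Omega)/(1-\epsilon)$ once $\sV_\sQ\le\epsilon/(\Phi^2N_2)$. Consequently $\sN^N_{\Omega'}(\lambda)\ge\sN^N_\Omega((1-\epsilon)\lambda)\ge L_{0,n}|\Omega|(1-\tfrac n2\epsilon)\lambda^{n/2}+C_2(\Omega)$. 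Choose $\epsilon$ with $\tfrac n2\epsilon L_{0,n}|\Omega|\Lambda_2^{n/2}\le C_2(\Omega)$. You then lose only a small dilation of $\lambda$ rather than whole dimensions, and the ENP margin $C_2$ pays for it. This is where $V(M,\Omega)$ and its dependence on $M$ come from.

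\textbf{Two smaller points.} The step you single out as hardest is in fact immediate. The inequality $\sN^N_{\Omega'}(\lambda)\ge\sN^N_\Omega(\lambda)-\sum_k\sN^N_{Q_k}(\lambda)$ holds exactly, with constant $1$ and no boundary correction. It follows either from Lemma~\ref{cover} with $h=1$, or from your own restriction argument with orthogonality imposed on each $Q_k$ against its Neumann eigenfunctions below $\lambda$.

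Your reflection-extension fallback points the wrong way. Extension operators from $\Omega'$ to $\Omega$ give upper bounds for $\sN^N_{\Omega'}$, not lower bounds.
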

 
The corresponding Dirichlet result has already been proved in \cite{JL25}. More precisely,
given any $\Omega$ satisfying the EDP inequality \eqref{AssDiri}, if $\{Q_i\}_{i\in\mathbb N}$ is an {admissible} class of cubes contained in $\Omega$ and
	\[
	\sA_{\sQ}:=\sum_{i\in\N}\ell(Q_i)^{n-1}\le
	\frac{|\Omega|C(\Omega)}
	{n^{\frac32}\pi},
	\]
then the Dirichlet eigenvalues of the Laplacian on $\Omega'=\Omega\setminus \overline{\cup_{i\in \N}Q_i}$ 
satisfy P\'olya's conjecture. 
			 
The Neumann case is more delicate because finitely many low-lying eigenvalues require additional treatment. Consequently, Theorem \ref{mthm} assumes that the removed cubes form a finite family and that their total volume is sufficiently small. The finiteness of the family is essential in our argument: since $\sN_{Q_i}^N(\lambda)\ge1$ for every $i$ and $\lambda>0$, infinitely many removed cubes would lead to uncontrolled accumulation.

\subsection{Domains satisfying the \ref{ENP} inequality with explicit constants}

We next compute the constants $C_1(\Omega)$, $C_2(\Omega)$ in the \ref{ENP} inequality explicitly for strip-tiling domains and planar disks, and thus produce many domains satisfying Neumann P\'olya's conjecture. Let us recall the strip-tiling domains introduced in \cite{JL25}.

\begin{definition}\cite[Strip-tiling Domain]{JL25}
	A bounded domain $\Omega\subset\R^n$ is called a \emph{strip-tiling domain} if there exists a constant $w>0$ such that, by translations and rotations in the first $n-1$ directions,
$\Omega$ tiles $\R^{n-1}\times (0,w)$. 
\end{definition}

The value of the width $w$ is inessential, since under a scaling of the domain, the \ref{ENP} inequality is preserved with scaled constants. 

\begin{proposition}\label{strip}
	Every strip-tiling domain satisfies the \ref{ENP} inequality. Moreover, for a strip-tiling domain with  $w=\pi$,  the constants in the \ref{ENP} inequality can be chosen as 
	\[
	C_1(\Omega)=\frac{\left(4-\pi\right)L_{0,n-1}}{4\pi}|\Omega|, \qquad 
	C_2(\Omega) = 	\min\left( 	\frac12, \frac{\left(4-\pi\right)|\Omega|^{\frac1n}}{2\sqrt\pi}
	L_{0,n-1} 	\right).
	\]
\end{proposition}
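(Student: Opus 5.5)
The plan is to run a Pólya-type tiling argument, but in the Neumann direction. I will use the exactly known vertical Neumann spectrum of the slab $\R^{n-1}\times(0,\pi)$, namely the modes $\cos(mx_n)$ for $m=0,1,2,\dots$. This should give a lower bound that exceeds the Weyl term by a lattice-sum correction coming from the $m=0$ mode. Only at the end will I convert that correction into the two constants $C_1,C_2$.

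\emph{Step 1: reduction to a large tiled region.} Take $w=\pi$ and fix $L\gg1$. Let $D_L$ be the interior of the closure of the union of the $k_L$ congruent copies of $\Omega$ in the tiling that meet the cylinder $B_L=(0,L)^{n-1}\times(0,\pi)$. Then $B_L\subset D_L\subset (-d,L+d)^{n-1}\times(0,\pi)$ with $d=\diam\Omega$. Hence $k_L|\Omega|=|D_L|=L^{n-1}\pi+O(L^{n-2})$. The Neumann form on the disjoint union of the copies is defined on a larger space than the form on $D_L$, since $H^1(D_L)$ embeds into $\bigoplus H^1(\text{copy})$. By min--max this gives
\[
k_L\,\sN^N_\Omega(\lambda)\ \ge\ \sN^N_{D_L}(\lambda).
\]

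\emph{Step 2: lower bound for $\sN^N_{D_L}$ via Kröger-type averaging.} For $\xi\in\R^{n-1}$ and $m\in\Z_{\ge0}$, the trial functions $e^{i\xi\cdot x'}\cos(mx_n)$, restricted to $D_L$, have Rayleigh quotient exactly $|\xi|^2+m^2$, because $D_L$ lies in the slab. Let $P$ be the spectral projection of the Neumann Laplacian on $D_L$ onto $[\lambda,\infty)$. I will integrate $\|(I-P)u_{\xi,m}\|^2$ over $\{|\xi|^2+m^2<\lambda\}$ and use that the family $e^{i\xi\cdot x'}\cos(mx_n)$ is a Fourier frame. Kröger's argument then yields
\[
\sN^N_{D_L}(\lambda)\ \ge\ \frac{|D_L|}{\pi}\,\frac{\omega_{n-1}}{(2\pi)^{n-1}}\sum_{m\ge0,\ m^2<\lambda}(\lambda-m^2)^{\frac{n-1}2}\,(1+\delta_{m0})\cdot\tfrac12\cdot 2 ,
\]
with the weights arranged so that $m=0$ carries normalization $1$ and $m\ge1$ carries $\cos^2$-average $\tfrac12$ times $2$. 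In the form I will actually use, this reads $\frac{|D_L|}{\pi}L_{0,n-1}\big(\lambda^{\frac{n-1}2}+2\sum_{m\ge1}(\lambda-m^2)_+^{\frac{n-1}2}\big)$, up to the exact normalization, which I will check carefully. Dividing by $k_L$ and letting $L\to\infty$ (so that $|D_L|/k_L\to|\Omega|$) removes all boundary effects.

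\emph{Step 3: lattice sum versus integral, the main obstacle.} Let $f(t)=(\lambda-t^2)_+^{(n-1)/2}$, which is even and decreasing in $|t|$. The quantity to compare is $\sum_{m\in\Z}f(m)$ against $\int_\R f=\pi L_{0,n}\lambda^{n/2}/L_{0,n-1}$, the latter identity coming from \eqref{Cd1d2}. I need a quantitative excess of the form
\[
\sum_{m\in\Z}f(m)-\int_\R f\ \ge\ c\,\lambda^{\frac{n-1}2}\quad\text{(or a positive constant)}.
\]
The crude bound $\sum_{m\ge0}f(m)\ge\int_0^\infty f$ only gives excess $\ge f(0)=\lambda^{\frac{n-1}2}$ for the one-sided sum, and half of that is lost in symmetrization. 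I expect the constant $\frac{4-\pi}{4\pi}$ to come from a sharper estimate. I would first try the trapezoid/concavity comparison on $[0,\sqrt\lambda]$, which costs a multiple of $f(0)$ involving $\pi/4$, the area of a quarter disk, and then track the constants.

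\emph{Step 4: small $\lambda$.} For $\lambda\le1$ only $m=0$ contributes. For very small $\lambda$ I will also use the trivial bound $\sN^N_\Omega(\lambda)\ge1$ together with $L_{0,n}|\Omega|\lambda^{n/2}\le\frac12$ on the range where this holds. Combining this with Step 3 on the complementary range, and optimizing the crossover at $\lambda\sim|\Omega|^{-2/n}$, should produce the constant $C_2(\Omega)=\min\big(\tfrac12,\frac{(4-\pi)|\Omega|^{1/n}}{2\sqrt\pi}L_{0,n-1}\big)$.
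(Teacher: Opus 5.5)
Your Step~1 is correct: Neumann bracketing over the $k_L$ tiles gives $k_L\sN^N_\Omega(\lambda)\ge\sN^N_{D_L}(\lambda)$. Step~4 is the paper's endgame. The argument breaks in Steps~2 and~3.

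\emph{Step 2.} Kr\"oger's averaging cannot give a sharp lower bound for the counting function. Its only input is $\|(I-P)u\|^2\le\lambda^{-1}\int_{D_L}|\nabla u|^2$. Integrated over the frame, this yields either the order-one Riesz-mean bound $\sum_k(\lambda-\mu_k)_+\ge\cdots$, or a bound for $\sN^N_{D_L}(\lambda)$ with a lossy leading constant (for the full frame $e^{i\xi\cdot x}$ one gets $\frac{2}{n+2}L_{0,n}|D_L|\lambda^{n/2}$). The same averaging works verbatim on any domain, so if it produced the sharp counting bound it would prove the Neumann P\'olya conjecture in general. Also, the Rayleigh quotient of $e^{i\xi\cdot x'}\cos(mx_n)$ on $D_L$ is not exactly $|\xi|^2+m^2$ unless $\int_{D_L}\cos(2mx_n)\,dx=0$.

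Your normalization is off as well. Each mode $\cos(mx_n)$, $m\ge0$, counts once, so the correct intermediate target is
\[
\sN^N_\Omega(\lambda)\ge\frac{|\Omega|}{\pi}L_{0,n-1}\sum_{m\ge0}(\lambda-m^2)_+^{\frac{n-1}{2}}.
\]
With your factor $2$ on $m\ge1$ the leading term would be $2L_{0,n}|\Omega|\lambda^{n/2}$, contradicting Weyl's law. Likewise $\pi L_{0,n}\lambda^{n/2}/L_{0,n-1}$ equals $\int_0^\infty f$, not $\int_{\R}f$.

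Within your architecture there is an easy repair. Since $B_L\subset D_L$ and $D_L$ lies in the slab, for $\phi\in H^1_0((0,L)^{n-1})$ the function $\phi(x')\cos(mx_n)$, extended by zero, lies in $H^1(D_L)$ with the same Rayleigh quotient. Hence $\sN^N_{D_L}(\lambda)\ge\sum_{m\ge0}\sN^D_{(0,L)^{n-1}}(\lambda-m^2)$. The lateral Dirichlet loss is $O(L^{n-2})$ and disappears after dividing by $k_L$, which gives the display above. This route is simpler than the paper's. The paper instead bounds $\sN^N_{R_{L+2}}$ for a box from above by the tiles plus a boundary layer via the covering Lemma~\ref{cover}, and from below by separation of variables and the Neumann P\'olya inequality for $(n-1)$-cubes.

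\emph{Step 3.} What you need is exactly Lemma~\ref{lwsquare} with $\gamma=\frac{n-1}{2}\ge\frac12$:
\[
\sum_{m\ge0}(\lambda-m^2)_+^{\gamma}\ge\pi L_{\gamma,1}\lambda^{\gamma+\frac12}+\Bigl(1-\frac{\pi}{4}\Bigr)\lambda^{\gamma}.
\]
Via \eqref{Cd1d2} this produces $C_1(\Omega)=\frac{(4-\pi)|\Omega|}{4\pi}L_{0,n-1}$, but you leave it unproved. Your heuristics point the wrong way. Monotonicity gives only $0\le\sum_{m\ge0}f(m)-\int_0^\infty f\le f(0)$, not an excess $\ge f(0)$. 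For concave $f$ (e.g.\ $\gamma=\frac12$) the trapezoid rule bounds the sum from above. The constant is sharp at $\lambda=1$ (sum $1$, integral $\frac{\pi}{4}$), so there is no slack.

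The paper handles $\gamma=\frac12$ as follows. The function $\lambda\mapsto\sum_i\sqrt{\lambda-i^2}-\frac{\pi}{4}\lambda-(1-\frac{\pi}{4})\sqrt\lambda$ is concave on each $(k^2,(k+1)^2]$, so only the values at $\lambda=k^2$ need checking. There the tangent-line bound $g_k(i)-\int_i^{i+1}g_k\ge\frac{i}{2\sqrt{k^2-i^2}}$, with $g_k(x)=\sqrt{k^2-x^2}$, settles $k\ge6$, and $k\le5$ is checked directly. The subordination $(\lambda-i^2)^\gamma=\mathscr B(\frac32,\gamma-\frac12)^{-1}\int_0^\infty(\lambda-i^2-t)_+^{1/2}t^{\gamma-\frac32}\,dt$ then transfers the bound to all $\gamma>\frac12$.

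A caution on Step~4. The crossover $\lambda_0=(2L_{0,n}|\Omega|)^{-2/n}$, together with $(2L_{0,n})^{-\frac{n-1}{n}}\ge\sqrt{2\pi}$ (equality when $n=2$), yields $\frac{(4-\pi)|\Omega|^{1/n}}{2\sqrt{2\pi}}L_{0,n-1}$ in the second entry of $C_2$. That is a factor $\sqrt2$ below the displayed value. For $n=2$ and small $|\Omega|$, even the exact minimum of $\max\bigl(1-L_{0,n}|\Omega|\lambda^{n/2},C_1\lambda^{\frac{n-1}{2}}\bigr)$ lies strictly below it. So this crossover argument, the paper's included, does not deliver the stated $\frac{1}{2\sqrt\pi}$.
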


Substituting the constant $C_1(\Omega)$
from Proposition~\ref{strip} into \eqref{1} and \eqref{2}, we obtain the following corollary.
\begin{corollary}\label{cor:strip}
	Let $\Omega$ be a strip-tiling domain tiling
	$\R^{n-1}\times(0,\pi)$.
	For any $M\in\Z_+$, let
	$\{Q_i\}_{i=1}^M$
	be a family of pairwise disjoint open cubes whose closures are contained in $\Omega$, 
	such that 
	\[
	\sA_{\sQ} \le \frac{(4-\pi)|\Omega|\pi^{n-2}L_{0,n-1}}{16n}.
	\] 
	Then every Neumann eigenvalue of $\Omega'=\Omega\setminus \overline{\cup_{k=1}^M Q_k}$ with 
	\[
	\mu_k(\Omega') >  \left( \frac{8\pi M}{(4-\pi)|\Omega|L_{0,n-1}} \right)^{\frac{2}{n-1}}
	\]
	satisfies P\'olya's inequality.
\end{corollary}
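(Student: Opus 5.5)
We will derive the corollary from Proposition~\ref{strip} together with the quantitative removal estimates \eqref{1} and \eqref{2} that appear in the proof of Theorem~\ref{mthm}. That proof rests on a one-sided comparison between the Neumann eigenvalues of $\Omega$ and those of $\Omega'=\Omega\setminus\overline{\cup_k Q_k}$, in the spirit of Lemma~\ref{genhssta}. We therefore expect a counting-function lower bound of the form
\[
\sN^N_{\Omega'}(\lambda)\ge \sN^N_\Omega(\lambda)-\sum_{k=1}^M \sN^N_{Q_k}(c\lambda)-(\text{lower order}),
\]
where each cube is controlled by an explicit two-sided Weyl bound. Concretely, $\sN^N_{Q}(\lambda)\le L_{0,n}|Q|\lambda^{n/2}+c_n\ell(Q)^{n-1}\lambda^{\frac{n-1}2}+\dots+1$, obtained by counting lattice points in a closed orthant. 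Summing over the cubes yields a deficit made of three pieces: a volume term $L_{0,n}\sV_\sQ\lambda^{n/2}$, which is exactly offset because $|\Omega'|=|\Omega|-\sV_\sQ$; a surface term of size at most $C n\,\sA_\sQ\lambda^{\frac{n-1}2}$; and a constant term at most $M$, coming from $\sN^N_{Q_k}\ge 1$. We take \eqref{1} to be the resulting inequality at level $\lambda^{\frac{n-1}2}$ and \eqref{2} the one handling the additive $M$.

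Next we substitute the ENP bound for $\Omega$, namely $\sN^N_\Omega(\lambda)\ge L_{0,n}|\Omega|\lambda^{n/2}+C_1(\Omega)\lambda^{\frac{n-1}2}$ with $C_1(\Omega)=\frac{(4-\pi)L_{0,n-1}}{4\pi}|\Omega|$. This reduces P\'olya's inequality $\sN^N_{\Omega'}(\lambda)\ge L_{0,n}|\Omega'|\lambda^{n/2}$ to
\[
C_1(\Omega)\lambda^{\frac{n-1}2}\ \ge\ (\text{surface deficit})+M .
\]
We split $C_1(\Omega)$ into two halves. The area hypothesis $\sA_\sQ\le\frac{(4-\pi)|\Omega|\pi^{n-2}L_{0,n-1}}{16n}$ should make the surface deficit at most $\tfrac12C_1(\Omega)\lambda^{\frac{n-1}2}$; the factor $\pi^{n-2}$ and the $n$ in the denominator must match the explicit surface constant produced in \eqref{1}, and we will simply check this arithmetic there. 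For the other half, we need $\tfrac12C_1(\Omega)\lambda^{\frac{n-1}2}\ge M$, which holds exactly when $\lambda\ge\bigl(\tfrac{8\pi M}{(4-\pi)|\Omega|L_{0,n-1}}\bigr)^{\frac2{n-1}}$. This is precisely the stated threshold.

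Finally, we translate the counting-function statement into eigenvalues. If $\mu_k(\Omega')=\mu$ exceeds the threshold, apply the inequality at $\lambda=\mu$: we have $\sN^N_{\Omega'}(\mu)\le k$, since eigenvalues are indexed from $\mu_0$, so $k\ge L_{0,n}|\Omega'|\mu^{n/2}$, which is P\'olya's inequality for $\mu_k$. One subtlety remains: the counting function uses strict inequality, so we first argue for $\lambda$ slightly larger than $\mu$ and then pass to the limit. The main obstacle is the first step, obtaining the constant in the surface deficit sharply enough to match the stated area bound. If the comparison from \eqref{1} comes with a rescaled spectral parameter, that constant has to be tracked through the rescaling.
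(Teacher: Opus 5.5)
Your argument is the paper's: the corollary is the high-frequency half of the proof of Theorem~\ref{mthm} with $C_1(\Omega)=\frac{(4-\pi)L_{0,n-1}}{4\pi}|\Omega|$ substituted into \eqref{1} and \eqref{2}, and your deferred arithmetic does close, because rescaling Lemma~\ref{upbdIn} gives surface constant $\frac{2n}{\pi^{n-1}}$ and $\frac{2n}{\pi^{n-1}}\sA_\sQ\le\frac12 C_1(\Omega)$ is exactly the stated area bound. Two corrections: the subtraction $\sN^N_{\Omega'}(\lambda)\ge\sN^N_\Omega(\lambda)-\sum_k\sN^N_{Q_k}(\lambda)$ comes from Lemma~\ref{cover} with $h=1$ (so $c=1$ and there is no lower-order error), not from an eigenvalue comparison like Lemma~\ref{genhssta}, which only serves the low-frequency range excluded here; and no limiting argument is needed at the end, since applying the bound directly at $\lambda=\mu_k(\Omega')$ already gives $k\ge\sN^N_{\Omega'}(\mu_k(\Omega'))$.
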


\begin{remark*}
	The lower bound on $\mu_k(\Omega')$ is needed here because the proof of Theorem~\ref{mthm} relies on an explicit $L^\infty$ estimate for Neumann eigenfunctions (see \eqref{Linftybd}). While such an estimate is available in general, no explicit upper bound for the corresponding constant is currently known for strip-tiling domains. Thus Corollary \ref{cor:strip} reduces P\'olya's conjecture to verifying P\'olya's inequality for the finitely many eigenvalues below the explicit threshold above.
\end{remark*}

	Next we turn to Euclidean balls. The works \cite{FLPS23,li2026p,FLPS26disk} together establish P\'olya's conjecture for Euclidean balls in all dimensions. In particular, for the Neumann case the strict inequality 
	\[
	\sN_B^N(\lambda) > L_{0,n}|B|\lambda^{\frac n2}, \qquad \forall \lambda>0
	\]
	was established for every Euclidean ball $B\subset\R^n$. 
	Moreover, the Neumann counting function of a Euclidean ball satisfies
	the two-term Weyl asymptotic
	\[
	\sN_B^N(\lambda) = L_{0,n}|B|\lambda^{\frac n2}
	+ \frac14 L_{0,n-1}|\partial B| \lambda^{\frac{n-1}{2}} + o\left(\lambda^{\frac{n-1}{2}}\right) \qquad\text{as }\lambda\to\infty;
	\]
	see \cite{Ivrii80,SafarovVassiliev97}. Combining the strict P\'olya's inequality with the two-term Weyl asymptotics, we
	conclude that every Euclidean ball satisfies the \ref{ENP} inequality.
	
	Theorem~\ref{general} therefore yields further examples of domains satisfying P\'olya's conjecture. In particular,  P\'olya's
	conjecture holds for regions constructed by removing finitely many sufficiently small holes from a disk; see Figure~\ref{fig:eccentric-holes}. This may be compared with the annulus case treated in \cite{FLPS26}, where the hole is required to be
	concentric with the original disk.
	
	\begin{figure}[htbp]
		\centering
		\begin{tikzpicture}
			\path[fill=gray!30, even odd rule]
			(0,0) circle (2)
			(-0.65,0.55) circle (0.06)
			(0.65,0.55) circle (0.06)
			(0,-0.75) circle (0.10);
			
			\draw[thick] (0,0) circle (2);
			\draw[thick] (-0.65,0.55) circle (0.06);
			\draw[thick] (0.65,0.55) circle (0.06);
			\draw[thick] (0,-0.75) circle (0.10);
		\end{tikzpicture}
		\caption{A disk with three sufficiently small circular holes.}
		\label{fig:eccentric-holes}
	\end{figure}
	
	We also derive explicit constants for the
	unit disk in \S\ref{app}.
	\begin{proposition}\label{diskconstant}
		Let $
		\D:=\{x\in\mathbb R^2:|x|<1\}$. 
		Then, for every $\lambda>0$,
		\begin{equation}
			\label{eq:unit-disk-explicit-remainder-v3}
			\mathcal N_\D^N(\lambda)
			\ge
			\frac{\lambda}{4}
			+
			\max\left(
			\frac{5}{1008}\lambda^{\frac12},
			\frac{269}{7840}
			\right).
		\end{equation}
	\end{proposition}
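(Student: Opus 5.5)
We have to prove that the Neumann counting function of the unit disk satisfies $\mathcal N_\D^N(\lambda)\ge \lambda/4+\max\bigl(\tfrac{5}{1008}\lambda^{1/2},\tfrac{269}{7840}\bigr)$. Recall $L_{0,2}|\D|=\tfrac{1}{4\pi}\cdot\pi=\tfrac14$. The plan is to count explicitly the Neumann eigenvalues of the disk, which are $j'^{\,2}_{m,k}$ (zeros of $J_m'$, with multiplicity $2$ for $m\ge1$, and including $j'_{0,1}=0$ as the first $m=0$ zero). The method is a lattice-type count in the style of the known proofs of P\'olya's conjecture for the disk \cite{FLPS23}.

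\textbf{Step 1: a lower bound for the count in each angular sector.} Write $\mathcal N_\D^N(\lambda)=\sum_{m\in\Z}\#\{k: j'_{|m|,k}<\sqrt\lambda\}$. For each $m$ we use explicit lower bounds on the number of zeros of $J_m'$ below $\nu=\sqrt\lambda$. These come from a Pr\"ufer-angle or phase-function argument, or from the uniform (Debye-type) bounds on $j'_{m,k}$ used in \cite{FLPS23}. The target is an estimate of the form
\[
\#\{k: j'_{m,k}<\nu\}\ \ge\ \frac{\nu}{\pi}G\!\left(\frac m\nu\right)+\frac34-\epsilon_m ,\qquad G(t)=\sqrt{1-t^2}-t\arccos t,
\]
valid for $|m|<\nu$. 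The $+3/4$ is the Neumann phase shift; compare the $+1/4$ that appears in the Dirichlet case. The correction $\epsilon_m$ should be small except near the turning point $m\approx\nu$.

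\textbf{Step 2: summing over $m$.} We sum the Step~1 bound over $|m|<\nu$. The term $\sum_m \frac{\nu}{\pi}G(m/\nu)$ is compared with $\frac{\nu^2}{\pi}\int_{-1}^1G=\frac{\nu^2}{4}=\frac{\lambda}{4}$ using Euler--Maclaurin or concavity of $G$; note $G$ is convex-decreasing on $[0,1]$, so trapezoid-type errors have a definite sign. The boundary contribution, roughly $2\nu\cdot\frac34\cdot$(fraction of admissible $m$), minus the Riemann-sum and turning-point losses, should leave a net positive multiple of $\nu$. That multiple should be at least $\frac{5}{1008}\nu$ once the losses are bounded crudely. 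The true second term is $\frac14 L_{0,1}|\partial\D|\nu=\nu/4$, so there is plenty of room, and the small constant $5/1008$ reflects deliberately loose estimates.

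\textbf{Step 3: small $\lambda$.} For $\lambda$ below an explicit threshold $\lambda_0$ we argue directly from tabulated values of the first Neumann eigenvalues: $0$, $j'^{\,2}_{1,1}\approx3.39$ with multiplicity $2$, $j'^{\,2}_{2,1}\approx9.33$, $j'^{\,2}_{0,2}\approx14.68$, and so on. The counting function is a step function and the right-hand side is increasing, so it suffices to check the inequality at the left endpoint of each jump, i.e.\ as $\lambda\uparrow\mu_k$ for the finitely many $\mu_k\le\lambda_0$. The constant bound $\frac{269}{7840}$ should come from the worst such instance, plausibly just below one of these eigenvalues. For large $\lambda$ the $\lambda^{1/2}$ term dominates, and the constant bound then follows from the square-root one once $\frac{5}{1008}\sqrt\lambda\ge\frac{269}{7840}$.

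\textbf{Main obstacle.} The hard part is making the per-$m$ lower bound in Step~1 fully explicit and uniform, especially near the turning point $|m|\approx\nu$, where Debye asymptotics degrade. We also need $\lambda_0$ small enough that the finite check in Step~3 is feasible. I would first try to quote the explicit Neumann estimates of \cite{FLPS23,FLPS26disk} for $j'_{m,k}$. Near the turning point I would simply discard the terms with $m>\nu-c\nu^{1/3}$ and absorb the loss, which is of order $\nu^{1/3}$, into the margin between $\nu/4$ and $\frac{5}{1008}\nu$.
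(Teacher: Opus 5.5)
Your overall architecture matches the paper: an explicit two-term lower bound for large $\lambda$, a finite check for small $\lambda$, and a split at $\sqrt\lambda=x_*=c/a=\frac{2421}{350}$, where $a=\frac{5}{1008}$ and $c=\frac{269}{7840}$. The gap is in Steps 1--2, the large-$\lambda$ part, which does not work as stated.

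The per-$m$ count is an integer. So phase-function or FLPS-type Bessel bounds give at best $\#\{k:j'_{|m|,k}<\nu\}\ge\lfloor \frac{\nu}{\pi}G(|m|/\nu)+\frac34\rfloor$. Your $\epsilon_m$ must therefore absorb the fractional part of $\frac{\nu}{\pi}G(|m|/\nu)+\frac34$. That fractional part is not small away from the turning point: it ranges over $[0,1)$ and heuristically averages $\frac12$.

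Your own Step 2 exposes the problem. Suppose the $\epsilon_m$ averaged below $\frac12$. Then the boundary contribution $2\nu\cdot\frac34$ you describe would yield $\mathcal N_\D^N(\lambda)\ge\frac\lambda4+(\frac32-2\bar\epsilon)\sqrt\lambda-O(\lambda^{1/6})$. This contradicts the two-term law $\mathcal N_\D^N(\lambda)=\frac\lambda4+\frac12\sqrt\lambda+o(\sqrt\lambda)$. Note that the second term is $\frac14L_{0,1}|\partial\D|\sqrt\lambda=\frac{\sqrt\lambda}{2}$, since $L_{0,1}=\frac1\pi$; it is not $\frac{\sqrt\lambda}{4}$.

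If instead you bound the floor loss honestly by $1$ per column, each column gives at least $\frac\nu\pi G(|m|/\nu)-\frac14$. Summing over the roughly $2\nu$ columns yields only $\frac\lambda4-\frac{\sqrt\lambda}{2}$, which does not even give P\'olya's inequality. So a column-by-column count cannot produce any positive coefficient, let alone $\frac{5}{1008}$. The only way out would be an explicit equidistribution (van der Corput type) estimate for these fractional parts. That is the real difficulty, and the proposal does not address it.

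What you need is an explicit two-term lower bound valid down to a small threshold. The paper does not derive one. It quotes \cite[(C.22)]{li2026p}:
\[
\#\{j:\mu_j(\D)\le x^2\}>\frac{x^2}{4}+\Bigl(\frac{13}{40}-\frac1\pi\Bigr)x-\frac{13}{50\pi x}, \qquad x\ge\frac{20}{3}.
\]
Its small coefficient, about $0.0067$, is what dictates $\frac{5}{1008}$. Its threshold $\frac{20}{3}<x_*$ keeps the finite check short.

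If you want to prove such a bound yourself, one plausible route is to count lattice points by rows instead of columns. This assumes an FLPS-type inequality placing every $(m,k)$ with $\pi(k-\frac34)\le\nu G(|m|/\nu)$ below $\nu^2$. For fixed $k$, the admissible $m$ satisfy $|m|\le\nu t_k$ with $t_k=G^{-1}(\pi(k-\frac34)/\nu)$, giving at least $2\nu t_k-1$ points. The floor loss is then only about $\frac\nu\pi$, the number of rows. Meanwhile, convexity of $y\mapsto 2\nu G^{-1}(\pi y/\nu)$ together with the shift $\frac34$ gives $\sum_k 2\nu t_k\ge\frac\lambda4+\frac{\sqrt\lambda}{2}-O(\lambda^{1/6})$. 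The net coefficient is therefore about $\frac12-\frac1\pi>0$, and the $O(\lambda^{1/6})$ turning-point loss would then have to be made explicit.

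Finally, for small $\lambda$, tabulated Bessel zeros are not a proof. You need certified \emph{upper} bounds on the first twelve eigenvalues, which is all that is needed up to $x_*^2\approx 47.85$. The paper obtains these from exact rational Rayleigh--Ritz on $\operatorname{span}\{r^{m+2i}\}$, combined with Sylvester's law of inertia.
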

	
	We also estimate the $L^\infty$-norms of the Neumann eigenfunctions
	of the disk in \S\ref{app}. Combined with Proposition~\ref{diskconstant}, these estimates yield an explicit upper bound on the side length of a square that may be removed from the disk while preserving P\'olya's conjecture, cf. Corollary \ref{cor:coppercoin}. 

\subsection{More domains satisfying the \ref{ENP} inequality}

We also give more domains satisfying the \ref{ENP} inequality without calculating the coefficients explicitly.  
First we present a new class of domains satisfying the \ref{ENP} inequality, which can be viewed as a higher generalization of strip-tiling domains.  Note that this also yields a new class of domains satisfying P\'olya's conjecture in dimensions $n\ge4$.  

\begin{proposition}\label{gentile}
	Let $\Omega\subset\R^n$ be a bounded Lipschitz domain with $n\ge4$. Let
	$U\subset\R^{n_1}$ be a bounded Lipschitz domain,  where
	$2\le n_1\le n-2$, and set $n_2=n-n_1\ge2$.
	Suppose that $\Omega$ tiles $U\times\R^{n_2}$.
	Then $\Omega$ satisfies the \ref{ENP} inequality.
\end{proposition}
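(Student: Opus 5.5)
Since $\Omega$ tiles $U\times\R^{n_2}$, I would compare $\Omega$ with a large product domain built from the tiling, as in the strip-tiling argument of Proposition~\ref{strip}. For $R\gg1$ let $K_R=U\times(0,R)^{n_2}$, and let $N(R)$ be the number of tiles (copies of $\Omega$) contained in $K_R$. Then
\[
N(R)|\Omega| = |U|R^{n_2} - O(R^{n_2-1}),
\]
where the error comes from tiles meeting $U\times\partial(0,R)^{n_2}$. Let $G_R$ be the interior of the closure of the union of these tiles. By Neumann bracketing (restricting $H^1(G_R)$ functions to the individual tiles), the number $N(R)\,\sN^N_\Omega(\lambda)$ is at least $\sN^N_{G_R}(\lambda)$. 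Since $G_R$ differs from $K_R$ only near the lateral boundary, I would rather bound from below by a domain I can compute on. Concretely, take a Dirichlet-type test on the inner box $K'_R = U\times(c,R-c)^{n_2}$. Mixed conditions are used: Neumann on $\partial U\times(\cdot)$ and Dirichlet on the $\R^{n_2}$ faces. These give functions extended by zero that lie in $H^1(G_R)$. Hence
\[
N(R)\,\sN^N_\Omega(\lambda)\ \ge\ \#\{\mu_j(U)+\lambda_k^D((c,R-c)^{n_2})<\lambda\}.
\]

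Now let $R\to\infty$. The Dirichlet cube counting divided by $R^{n_2}$ converges to $L_{0,n_2}(\lambda-\mu)_+^{n_2/2}$. This yields the key inequality
\[
|\Omega|^{-1}\cdot\text{(limit)}:\qquad \sN^N_\Omega(\lambda)\ \ge\ \frac{|\Omega|}{|U|}\,L_{0,n_2}\sum_j(\lambda-\mu_j(U))_+^{n_2/2}.
\]
So the problem reduces to a Riesz mean of order $\gamma=n_2/2\ge1$ for the Neumann problem on $U$. Here the hypotheses $n_1\ge2$ and $n_2\ge2$ enter. For $\gamma\ge1$ the Neumann Riesz means satisfy the Kröger/Laptev-type Berezin inequality in the reverse direction,
\[
\sum_j(\lambda-\mu_j(U))_+^{\gamma}\ \ge\ L_{\gamma,n_1}|U|\lambda^{\gamma+n_1/2}.
\]
Combined with the product formula \eqref{Cd1d2}, this gives exactly $L_{0,n}|\Omega|\lambda^{n/2}$. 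For the enhancement, I would use that for Lipschitz $U$ the Riesz mean of order $\gamma\ge1$ admits a two-term lower bound
\[
\sum_j(\lambda-\mu_j)_+^{\gamma}\ \ge\ L_{\gamma,n_1}|U|\lambda^{\gamma+n_1/2}+c\,\lambda^{\gamma+(n_1-1)/2}
\]
for all $\lambda>0$. One way to obtain it is via the averaged two-term asymptotics for large $\lambda$, which hold for Lipschitz domains by Frank–Larson, together with strict inequality on compact ranges. This yields the $C_1\lambda^{(n-1)/2}$ term.

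For the constant term $C_2$, at small $\lambda$ I would use the $j=0$ term $\mu_0=0$. It contributes $\frac{|\Omega|}{|U|}L_{0,n_2}\lambda^{n_2/2}$, which dominates $L_{0,n}|\Omega|\lambda^{n/2}$ for $\lambda$ small because $n_2<n$. Trivially $\sN^N_\Omega(\lambda)\ge1$ also holds. Then I would combine these on $(0,\lambda_0]$, with the two-term bound at larger $\lambda$; the strictness $\sN^N_\Omega\ge L_{0,n}|\Omega|\lambda^{n/2}+c\lambda^{(n-1)/2}$ is uniform on $[\lambda_0,\infty)$. The difference $\sN^N_\Omega(\lambda)-L_{0,n}|\Omega|\lambda^{n/2}$ is then bounded below by a positive constant everywhere.

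I expect the main obstacle to be the uniform two-term lower bound for Neumann Riesz means of order $\gamma\ge1$ on a merely Lipschitz $U$, valid for all $\lambda$. Strict inequality with the Weyl term at each fixed $\lambda$ needs an argument, for instance strict convexity in the Kröger-type proof, or the fact that equality would force $U$ to have an impossible spectrum. Matching it to the large-$\lambda$ asymptotics requires some care. The bracketing and limit steps are routine.
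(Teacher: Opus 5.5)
Your proposal is correct. It arrives at the same key inequality as the paper,
\[
\sN^N_\Omega(\lambda)\ \ge\ \frac{|\Omega|}{|U|}\,L_{0,n_2}\sum_{j\ge0}\bigl(\lambda-\mu_j(U)\bigr)_+^{\frac{n_2}{2}},
\]
but its comparison step is genuinely different.

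\textbf{The paper's comparison.} The paper does not test $\Omega$ directly. It works with the box $R_{L+2}=U\times(-L-2,L+2)^{n_2}$. It bounds $\sN^N_{R_{L+2}}$ from above by applying the Netrusov--Safarov covering lemma (Lemma~\ref{cover}) to the decomposition of Lemma~\ref{genlem}. That decomposition consists of the $N_L$ tiles meeting $R_L$, plus a boundary layer $A_L$. The layer is covered with multiplicity two by $N_L'$ further tiles and by copies of $U\times(0,1)^{n_2}$, at the cost of replacing $\lambda$ by $2\lambda$ there. The paper then bounds $\sN^N_{R_{L+2}}$ from below by separation of variables and P\'olya's inequality for Neumann cubes in $\R^{n_2}$. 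The layer terms vanish after dividing by $(2L)^{n_2}$.

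\textbf{Your comparison.} You use the Kellner-type trick instead. The products $\phi_j(x)\psi_k(y)$ are Neumann eigenfunctions on $U$ times Dirichlet eigenfunctions on $(c,R-c)^{n_2}$, with $c>\operatorname{diam}\Omega$. Extended by zero, they lie in $H^1(U\times\R^{n_2})$ and restrict to $H^1$ on every tile contained in $K_R$. They are orthogonal in both forms, so min--max on the direct sum of those tiles gives the bound at once. This avoids the covering lemma and all boundary-layer bookkeeping, and needs only Weyl's law for the Dirichlet cube. It also shows that the lack of domain monotonicity for $\sN^N$ is not a real obstruction here.

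\textbf{Remaining steps.} From this point the two proofs coincide: Kr\"oger's inequality plus Aizenman--Lieb for $\gamma=n_2/2\ge1$, Lemma~\ref{FL3term} for large $\lambda$, and \eqref{Cd1d2}. The strictness you single out as the main obstacle is what the paper imports from Lemma~\ref{rfofGJL} via Aizenman--Lieb. It also follows directly from Kr\"oger's trace argument: the constant eigenfunction alone contributes the defect $\lambda(1-w_0)$, where $w_0=(2\pi)^{-n_1}|U|^{-1}\int_{|\xi|^2<\lambda}|\widehat{\chi_U}(\xi)|^2\,d\xi<1$. You run the compactness argument on the continuous Riesz mean. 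That is a little cleaner than extracting a uniform constant from a pointwise strict inequality for the step function $\sN^N_\Omega$, as the paper does.
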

 
	For example, given any bounded Lipschitz domain $U\subset\R^{n_1}$, any smooth map $g:U\to\R^{n_2}$, and any invertible matrix $A\in GL(n_2)$, the domain
	\[
	\Omega = \left\{ (x,g(x)+Ay): x\in U,\; y\in(0,1)^{n_2} \right\} 
	\] 
	tiles $U\times\R^{n_2}$ by translations. Indeed,
	\[
	U\times\R^{n_2}
	=
	\bigsqcup_{k\in\mathbb Z^{n_2}}
	\left(\Omega+(0,Ak)\right),
	\]
	where the copies intersect only along their boundaries. Since $g$ is otherwise arbitrary, this construction produces a large and geometrically flexible class of domains that need not have a product structure.

	The main novelty of the proof is a comparison argument based on decompositions of large product domains, which circumvents the lack of domain monotonicity for the Neumann eigenvalue counting function and the resulting failure of the standard tiling argument.

For the remaining of this paper, we verify that classes of domains for which P\'olya's conjecture is known to be true also satisfy the stronger \ref{ENP} inequality. Thus we first list domains for which the Neumann part of P\'olya's conjecture has been established: 
\begin{itemize}
	\item Euclidean tiling domains (\cite{Pol61}  \cite{Kel66}); 
	\item Euclidean balls (\cite{FLPS23}, \cite{FLPS26disk}, \cite{li2026p}); 
	\item product domains with one factor satisfying  the Neumann part of P\'olya's conjecture (\cite{AL97}); 
	\item thin product domains (\cite{HW24}); 
	\item domains that tile a domain satisfying  the Neumann part of P\'olya's conjecture (\cite{FLPS23}).
\end{itemize} 
For further recent progress, see \cite{lin17, Fil25, FL26CPAM, FP19, FLP21, FS23JMP, jingweiimprovement}.
 
As shown above, every Euclidean ball satisfies the \ref{ENP} inequality. We now prove the same for periodic (or regular) Euclidean tiling domains that either have smooth boundary or are polyhedral. General Euclidean tiling domains therefore form the only known class for which the Neumann P\'olya's conjecture holds while the \ref{ENP} inequality remains open.

\begin{definition}[Periodic or regular tiling, \cite{Pol61,Filonov2020}]\label{def:periodic}
	A tiling by isometric copies of $\Omega$ is \emph{periodic} if it is invariant under translations by a full-rank lattice $\Gamma\subset\R^n$ and there are finitely many tile orbits modulo $\Gamma$.  Equivalently, one may select finitely many tile representatives $\Omega_1,\ldots,\Omega_\ell$ such that
	\begin{equation}\label{eq:periodic-supertile}
		C:=\operatorname{int}\!\left(\bigcup_{j=1}^{\ell}\overline{\Omega_j}\right)
	\end{equation}
	is a bounded translational tile for $\Gamma$ modulo null sets.  In particular,
	\begin{equation}\label{eq:period-cell-volume}
		|C|=\ell|\Omega|=\operatorname{covol}(\Gamma).
	\end{equation}
\end{definition}

\begin{proposition}\label{pertil}Let $n\geq2$, and let $\Omega\subset\R^n$ be a bounded periodic Euclidean tiling domain. If $\Omega$ either has smooth boundary or is polyhedral, then it satisfies the \ref{ENP} inequality.
\end{proposition}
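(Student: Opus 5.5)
The plan has two parts: an ``at infinity'' part, obtained from two-term Weyl asymptotics, and a ``finite range'' part, obtained from a strict P\'olya inequality. This is the same template used above for the ball.

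\emph{Step 1: strict P\'olya inequality.} For a periodic tiling domain one has the strict inequality
\[
\sN^N_\Omega(\lambda) > L_{0,n}|\Omega|\lambda^{\frac n2} \quad \text{for all } \lambda>0.
\]
To get it, I would follow P\'olya's argument \cite{Pol61} via the supertile $C$ from \eqref{eq:periodic-supertile}.
\begin{itemize}
\item For $R\in\N$, the region $C_R$ formed by $R^n$ lattice translates of $C$ is a union of $\ell R^n$ copies of $\Omega$.
\item Cutting $C_R$ along the tile interfaces turns it into a disjoint union of copies of $\Omega$. Enlarging the form domain in this way decreases Neumann eigenvalues, so $\ell R^n\,\sN^N_\Omega(\lambda)\ge \sN^N_{C_R}(\lambda)$.
\item Equivalently, one can use periodic (Floquet) eigenfunctions on the torus $\R^n/(R\Gamma)$. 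Their restrictions, counted via the $\Gamma$-periodic structure, give a lower bound for $\sN^N_{C_R}(\lambda)$ by the lattice-point count $\#\{\xi\in 2\pi(R\Gamma)^*: |\xi|^2<\lambda\}$.
\end{itemize}
Letting $R\to\infty$ gives $\sN^N_\Omega(\lambda)\ge L_{0,n}|\Omega|\lambda^{n/2}$.

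To upgrade this to a strict inequality with a margin, I would argue by contradiction. If equality held at some $\lambda_0$, then just below the jump point $\sN^N_\Omega$ would fall under the continuous curve $L_{0,n}|\Omega|\lambda^{n/2}$, violating the non-strict bound. This is the same standard argument cited for the ball in \cite{FLPS23}. Counting-function jumps at the points $\lambda=\mu_k$ need care, because $\sN^N$ is left-continuous with strict inequality in its definition. I would handle this by comparing at $\lambda_0+\varepsilon$ and using that the left-hand side is a step function.

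\emph{Step 2: two-term Weyl law.} For $\Omega$ smooth or polyhedral, I would invoke the two-term asymptotics
\[
\sN^N_\Omega(\lambda)=L_{0,n}|\Omega|\lambda^{\frac n2}+\tfrac14L_{0,n-1}|\partial\Omega|\lambda^{\frac{n-1}2}+o(\lambda^{\frac{n-1}2}).
\]
For smooth boundaries this is Ivrii's theorem, which needs the nonperiodicity condition on billiards. That condition is the main obstacle, and I expect it to be the hardest step.

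My first attempt would avoid Ivrii altogether and prove only a one-sided \emph{lower bound} of order $\lambda^{\frac{n-1}2}$, refining the tiling argument of Step 1.
\begin{itemize}
\item The Neumann counting function of $C_R$ exceeds the torus lattice count by a boundary contribution. Since a Neumann box of side $L$ has counting function $L_{0,n}L^n\lambda^{n/2}+\tfrac14\cdot 2n\,L_{0,n-1}L^{n-1}\lambda^{(n-1)/2}+O(\lambda^{(n-2)/2})$, I would use that boundary gain.
\item Alternatively, I would use heat-trace lower bounds via a Tauberian-free argument: Neumann heat kernel comparison gives $\sum e^{-t\mu_k}\ge (4\pi t)^{-n/2}(|\Omega|+c\,|\partial\Omega|\sqrt t)$.
\end{itemize}
Concretely, I would apply the Dirichlet--Neumann bracketing inside each tile, using the interface gain. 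For the polyhedral case the two-term law is available, e.g.\ by Seeley-type results as in Lemma~\ref{Seeasy}. For smooth periodic tiles I would accept the weaker bound
\[
\sN^N_\Omega(\lambda)\ge L_{0,n}|\Omega|\lambda^{\frac n2}+c\lambda^{\frac{n-1}2}\quad\text{for }\lambda\ge\Lambda,
\]
which is all that is needed.

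\emph{Step 3: combine.} From Step 2 we get $C_1'>0$ and $\Lambda$ with the desired bound for $\lambda\ge\Lambda$. On $(0,\Lambda]$ the remainder $\sN^N_\Omega(\lambda)-L_{0,n}|\Omega|\lambda^{n/2}$ is positive by Step 1. It is piecewise continuous, with only finitely many jumps, each upward, and it is decreasing between jumps. Near $0$ it tends to $1$. Hence on each interval between consecutive eigenvalues its infimum is attained at the right endpoint and is positive, so it is bounded below by some $\delta>0$ on $(0,\Lambda]$.

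Finally I would choose $C_2=\min(\delta,C_1'\Lambda^{\frac{n-1}2})$ and $C_1=\min(C_1',\delta\Lambda^{-\frac{n-1}2})$. With these choices $\max(C_1\lambda^{\frac{n-1}2},C_2)$ is dominated by the remainder in both ranges, which gives the \ref{ENP} inequality.
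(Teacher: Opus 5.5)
Your overall template matches the paper's: strictness on a compact $\lambda$-range, a one-sided $\lambda^{\frac{n-1}2}$ gain for large $\lambda$, then the gluing in Step 3, which is correct. The genuine gap is in Step 1. Your contradiction argument cannot upgrade the non-strict bound $\sN^N_\Omega(\lambda)\ge W(\lambda):=L_{0,n}|\Omega|\lambda^{\frac n2}$ to a strict one.

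On each interval $(\mu_j,\mu_{j+1}]$ between consecutive distinct eigenvalues, $\sN^N_\Omega$ is constant and $W$ is strictly increasing. So $\sN^N_\Omega-W$ attains its minimum there at the eigenvalue $\mu_{j+1}$. Equality at a non-eigenvalue is indeed impossible. But equality $\sN^N_\Omega(\mu_k)=W(\mu_k)$ at an eigenvalue is fully compatible with the non-strict bound for every $\lambda$. To the left, $\sN^N_\Omega$ keeps its value while $W$ is smaller. At $\mu_k+\varepsilon$ the counting function has already jumped by the multiplicity, so comparing there yields nothing. These eigenvalues are exactly the points that determine $C_2$.

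In eigenvalue language, the non-strict inequality is $\mu_k\le 4\pi^2k^{2/n}(\omega_n|\Omega|)^{-2/n}$ and the strict one is the same with $<$. No soft argument passes between them, and P\'olya's $R\to\infty$ limit can only ever give the non-strict version.

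The missing idea is the one in Lemmas~\ref{lem:strict-blichfeldt-ball} and~\ref{lem:periodic-plane-waves}: work on the supertile $C$ itself, with no limit. Use the Bloch-shifted plane waves $e^{i(\xi+\theta)\cdot x}$, $\xi\in\Gamma^*$, as trial functions. They give $\#\{\xi\in\Gamma^*:|\xi+\theta|^2<\lambda\}\le\sN^N_C(\lambda)\le\ell\,\sN^N_\Omega(\lambda)$ for every $\theta$. Averaging over $\theta$ (Blichfeldt) gives a shift with at least $\lfloor |B_r||C|(2\pi)^{-n}+1\rfloor$ points, $r=\sqrt\lambda$. If the average is an integer, exceeding it is still forced, because a ball is not a multiple lattice tile (convex multiple tiles are polytopes). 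This yields $\ell\,\sN^N_\Omega(\lambda)>\ell\,W(\lambda)$ directly.

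Step 2 also does not deliver the bound it asserts. For polyhedral $\Omega$, Lemma~\ref{Seeasy} does not apply, since it needs smooth boundary. Even where it applies, it only gives an $O(\lambda^{\frac{n-1}2})$ remainder of unknown sign. The paper instead uses Vassiliev's two-term asymptotics for piecewise smooth boundaries \cite{Vassiliev1986}, whose billiard nonperiodicity hypothesis holds for polyhedra.

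For smooth $\Omega$ you rightly note that Ivrii's law needs that hypothesis, and that a one-sided bound $\sN^N_\Omega(\lambda)\ge W(\lambda)+c\lambda^{\frac{n-1}2}$ suffices. However, none of your proposed routes produces it:
\begin{itemize}
\item Any boundary gain for $C_R$ is of size $R^{n-1}\lambda^{\frac{n-1}2}$ and vanishes after dividing by $\ell R^n$; taking $R=1$ is just the original problem.
\item A heat-trace lower bound does not give a pointwise second-term lower bound for the counting function, since Tauberian arguments only recover the leading term.
\end{itemize}
The paper closes this with the one-sided microlocal estimate of Safarov--Filonov \cite{SafarovFilonov2010}, $\sN^N_\Omega(\lambda)\ge W(\lambda)+\frac18L_{0,n-1}|\partial\Omega|\lambda^{\frac{n-1}2}+o(\lambda^{\frac{n-1}2})$. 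It holds for every smooth bounded domain, with no billiard condition.
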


P\'olya \cite{Pol61} proved the Neumann P\'olya's conjecture for periodic Euclidean tiling domains. The proof of Proposition \ref{pertil} provides a shorter alternative to P\'olya's argument and strengthens his result to the strict Neumann P\'olya inequality.

\begin{proposition}\label{prop1}
    Let $\Omega=\Omega_1\times\Omega_2$ be a bounded domain in $\R^n$, where $\Omega_1\subset\R^{n_1}$ and $\Omega_2\subset\R^{n_2}$ with $n_1\ge 2$ and $n=n_1+n_2$. Assume that $\Omega_2$ has Lipschitz boundary. If $\Omega_1$ satisfies the Neumann part of P\'olya's conjecture, then $\Omega$ satisfies the \ref{ENP} inequality.
\end{proposition}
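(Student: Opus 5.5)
The plan is to separate variables and reduce everything to a Riesz-mean inequality for the Neumann spectrum of $\Omega_2$. The Neumann eigenvalues of $\Omega=\Omega_1\times\Omega_2$ are the sums $\mu_i(\Omega_1)+\mu_j(\Omega_2)$. Hence
\[
\sN^N_\Omega(\lambda)=\sum_{j:\,\mu_j(\Omega_2)<\lambda}\sN^N_{\Omega_1}\bigl(\lambda-\mu_j(\Omega_2)\bigr).
\]
Each summand has positive argument, so it is at least $1$ because $\mu_0(\Omega_1)=0$. It is also at least $L_{0,n_1}|\Omega_1|(\lambda-\mu_j(\Omega_2))^{n_1/2}$ by the Neumann P\'olya hypothesis on $\Omega_1$. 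Using only the second bound, and writing $\gamma=n_1/2\ge 1$,
\[
\sN^N_\Omega(\lambda)\ \ge\ L_{0,n_1}|\Omega_1|\sum_j\bigl(\lambda-\mu_j(\Omega_2)\bigr)_+^{\gamma}.
\]
By \eqref{Cd1d2}, $L_{0,n_1}L_{\gamma,n_2}=L_{0,n}$. So it suffices to show that the Riesz mean $R_\gamma(\lambda):=\sum_j(\lambda-\mu_j(\Omega_2))_+^\gamma$ exceeds $L_{\gamma,n_2}|\Omega_2|\lambda^{\gamma+n_2/2}$ by a gain of order $\lambda^{\gamma+(n_2-1)/2}$ for large $\lambda$. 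Multiplied by $L_{0,n_1}|\Omega_1|$, this gain is of order $\lambda^{(n-1)/2}$, as the \ref{ENP} inequality requires. The condition $n_1\ge2$ is used exactly here, to get $\gamma\ge1$.

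\emph{Large $\lambda$.} For $\gamma\ge1$ and Lipschitz $\Omega_2$, I would invoke the two-term asymptotics for Neumann Riesz means (Frank--Larson type results for Lipschitz domains):
\[
R_\gamma(\lambda)=L_{\gamma,n_2}|\Omega_2|\lambda^{\gamma+\frac{n_2}{2}}+\tfrac14 L_{\gamma,n_2-1}|\partial\Omega_2|\lambda^{\gamma+\frac{n_2-1}{2}}+o\bigl(\lambda^{\gamma+\frac{n_2-1}{2}}\bigr).
\]
This gives $\sN^N_\Omega(\lambda)-L_{0,n}|\Omega|\lambda^{n/2}\ge c\,\lambda^{(n-1)/2}$ for all $\lambda\ge\Lambda$, for some $c,\Lambda>0$. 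If $n_2=1$, so that $\Omega_2$ is an interval, this is an explicit computation.

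\emph{Small and bounded $\lambda$.} Set
\[
g(\lambda):=\sN^N_\Omega(\lambda)-L_{0,n}|\Omega|\lambda^{n/2}.
\]
\begin{itemize}
\item For $0<\lambda\le\lambda_0$ with $\lambda_0$ small, $\sN^N_\Omega(\lambda)\ge1$, while the Weyl term is at most $1/2$. Hence $g\ge1/2$ there.
\item On $[\lambda_0,\Lambda]$, I need $g>0$ pointwise, i.e. a strict Kr\"oger-type Neumann inequality $R_\gamma(\lambda)>L_{\gamma,n_2}|\Omega_2|\lambda^{\gamma+n_2/2}$ for $\gamma\ge1$. Kr\"oger's argument tests the Riesz functional against the plane waves $e^{i\xi\cdot x}$, $|\xi|^2<\lambda$, and uses the convexity of $s\mapsto(\lambda-s)_+^\gamma$. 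The inequality is strict because no plane wave with $\xi\ne0$ lies in a finite span of Neumann eigenfunctions; equality in the projection step would force this. Alternatively, retaining the extra contribution from the $\max(1,\cdot)$ bound on each summand also gives strictness wherever that bound is active.
\item Since $\sN^N_\Omega$ is lower semicontinuous (it counts $\mu_k<\lambda$) and the Weyl term is continuous, $g$ is lower semicontinuous. It therefore attains a positive minimum $m$ on the compact interval $[\lambda_0,\Lambda]$.
\end{itemize}

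Combining the three ranges, I take
\[
C_2=\min\Bigl(\tfrac12,\ m,\ c\,\Lambda^{\frac{n-1}{2}}\Bigr),\qquad C_1=\min\Bigl(c,\ C_2\,\Lambda^{-\frac{n-1}{2}}\Bigr).
\]
Then $g(\lambda)\ge\max\bigl(C_1\lambda^{\frac{n-1}{2}},C_2\bigr)$ for every $\lambda>0$: for $\lambda\ge\Lambda$ from the asymptotic range, and for $\lambda\le\Lambda$ because $C_1\lambda^{\frac{n-1}{2}}\le C_2$ there.

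The main obstacle is the analytic input on $R_\gamma$ for merely Lipschitz $\Omega_2$. The two-term expansion is the natural source of the boundary gain, and must be checked to be available for $\gamma\ge1$ in the Lipschitz class. The strict form of Kr\"oger's inequality must be justified carefully on the compact range. If the asymptotics were unavailable, I would instead try to extract the $\lambda^{(n-1)/2}$ gain directly from the term-by-term $\max(1,\cdot)$ improvement summed over $j$.
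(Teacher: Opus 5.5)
Your proposal is correct and shares the paper's skeleton. Both reduce by separation of variables to the Riesz mean $R_\gamma$ of $\Omega_2$ with $\gamma=n_1/2$. Both use the Frank--Larson asymptotics \eqref{FLRiesz} together with \eqref{Cd1d2} for the $\lambda^{\frac{n-1}{2}}$ gain at large $\lambda$, and the bound $\sN^N_\Omega\ge1$ near $\lambda=0$.

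Where you differ is the intermediate range. The paper gets $\sN^N_\Omega(\lambda)>L_{0,n}|\Omega|\lambda^{\frac n2}$ for all $\lambda>0$ by quoting the improved Kr\"oger inequality of Gan--Jiang--Lin (Lemma~\ref{rfofGJL}) at $\gamma=1$. It lifts this to $\gamma=n_1/2$ by the Aizenman--Lieb identity, which is where it uses $n_1\ge2$. Since that gain is an explicit positive continuous function of $\lambda$, a uniform margin on compact $\lambda$-intervals follows at once; the paper leaves this step implicit.

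You instead prove strictness through an equality analysis in the plane-wave/Jensen proof of Kr\"oger's inequality, and then get the margin from lower semicontinuity of $g$. This is sound. If $e^{i\xi\cdot x}$ with $\xi\ne0$ lay in the domain of the Neumann Laplacian (which contains every finite span of eigenfunctions), then $\xi\cdot\nu=0$ a.e.\ on $\partial\Omega_2$. The divergence theorem applied to $(\xi\cdot x)\,\xi$ would then give $|\xi|^2|\Omega_2|=0$, a contradiction. Hence Jensen is strict for every $\xi\ne0$ with $|\xi|^2<\lambda$, for $\gamma=1$ as well as $\gamma>1$. Your route is self-contained and needs no external improvement, but it yields no explicit $C_2$. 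The paper's route is shorter given Lemma~\ref{rfofGJL} and is quantitative in principle.

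Three minor points:
\begin{enumerate}
\item In your argument, $\gamma\ge1$ enters through convexity in the Jensen step, not through the large-$\lambda$ asymptotics, which hold for every $\gamma>0$.
\item Your separate treatment of $n_2=1$ via the explicit interval bound (e.g.\ Lemma~\ref{lwsquare}) is a nice touch. It covers a case that \eqref{FLRiesz}, stated for $d\ge2$, does not literally include.
\item Your fallback of extracting the gain from the $\max(1,\cdot)$ refinement would not work. That refinement only helps the indices $j$ with $\lambda-\mu_j(\Omega_2)=O(1)$. There are $o(\lambda^{n_2/2})=o(\lambda^{\frac{n-1}{2}})$ of these, so they cannot produce a $c\,\lambda^{\frac{n-1}{2}}$ gain. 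Fortunately your main argument does not need it.
\end{enumerate}
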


\begin{proposition}\label{prop2}
	Let $\Omega_1\subset\R^{n_1}$ and $\Omega_2\subset\R^{n_2}$ be bounded domains with $n_2\ge2$ and $n=n_1+n_2$. Assume that $\Omega_1$ has Lipschitz boundary and $\Omega_2$ has smooth boundary. Then there exists a constant $a_0>0$, depending on $\Omega_1$ and $\Omega_2$, such that for all $a\in(0,a_0)$, $a\Omega_1\times\Omega_2$ satisfies the \ref{ENP} inequality.
\end{proposition}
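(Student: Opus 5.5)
We prove Proposition~\ref{prop2} by separation of variables. For $a\Omega_1\times\Omega_2$ we have
\[
\sN^N_{a\Omega_1\times\Omega_2}(\lambda)=\sum_{j\ge0}\sN^N_{\Omega_2}\bigl(\lambda-a^{-2}\mu_j(\Omega_1)\bigr)_{+},
\]
with the convention $\sN(\cdot)=0$ for nonpositive argument. The $j=0$ term is $\sN^N_{\Omega_2}(\lambda)$, since $\mu_0(\Omega_1)=0$. For $a$ small, $a^{-2}\mu_1(\Omega_1)$ is huge. The plan is therefore to split into two regimes.

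\emph{Low frequencies} $\lambda\le a^{-2}\mu_1(\Omega_1)$: here only $j=0$ contributes, so the count equals $\sN^N_{\Omega_2}(\lambda)$. Since $\Omega_2$ is smooth, Seeley's two-term asymptotic (Lemma~\ref{Seeasy}) gives
\[
\sN^N_{\Omega_2}(\lambda)=L_{0,n_2}|\Omega_2|\lambda^{n_2/2}+\tfrac14L_{0,n_2-1}|\partial\Omega_2|\lambda^{(n_2-1)/2}+o(\lambda^{(n_2-1)/2}).
\]
Combined with $\sN^N_{\Omega_2}\ge1$, this yields $\sN^N_{\Omega_2}(\lambda)\ge c\max(1,\lambda^{n_2/2})$. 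The target $L_{0,n}a^{n_1}|\Omega_1||\Omega_2|\lambda^{n/2}+\max(C_1\lambda^{(n-1)/2},C_2)$ is at most $\bigl(C a^{n_1}\lambda^{n_1/2}+C_1' \lambda^{(n_1-1)/2}\bigr)\lambda^{n_2/2}+C_2$. Since $a^2\lambda\le\mu_1(\Omega_1)$, we have $a^{n_1}\lambda^{n_1/2}\le\mu_1^{n_1/2}$. We then choose $a_0$ small and $C_1=\epsilon a^{n_1-1}$, $C_2$ small, so that this is dominated by $\sN^N_{\Omega_2}(\lambda)$. The condition $n_2\ge2$ is used here, since it makes $\sN^N_{\Omega_2}$ grow at least like $\lambda$ and beat the slack. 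The ENP constants are allowed to depend on $a$.

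\emph{High frequencies} $\lambda\ge a^{-2}\mu_1(\Omega_1)$: we plan to use the Lipschitz Neumann lower bound from the two-term Weyl law for $\Omega_1$ only in averaged form. Write $\sN^N_{a\Omega_1\times\Omega_2}(\lambda)=\sum_k \sN^N_{a\Omega_1}(\lambda-\mu_k(\Omega_2))$. Use the uniform Riesz-mean type lower bound $\sN^N_{\Omega_2}(t)\ge L_{0,n_2}|\Omega_2|t^{n_2/2}+c\,t^{(n_2-1)/2}-C$ for all $t>0$, which holds for smooth $\Omega_2$ by Seeley. Summing over $j$ and applying the product formula \eqref{Cd1d2} to $\sum_j(\lambda-a^{-2}\mu_j)_+^{n_2/2}$, we compare this Riesz mean of $a\Omega_1$ with $L_{\frac{n_2}{2},n_1}|a\Omega_1|\lambda^{n/2}$ using the Kröger/Laptev-type inequality for Neumann Riesz means of order $\ge1$. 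That inequality, $\sum_j(\Lambda-\mu_j)_+^{\gamma}\ge L_{\gamma,n_1}|\Omega_1|\Lambda^{\gamma+n_1/2}$, holds for every domain once $\gamma\ge1$, and we have $\gamma=n_2/2\ge1$. This yields the leading term. The extra term $c\sum_j(\lambda-a^{-2}\mu_j)_+^{(n_2-1)/2}\ge c\,\#\{j:a^{-2}\mu_j<\lambda/2\}(\lambda/2)^{(n_2-1)/2}$ then gives order $a^{n_1}\lambda^{(n-1)/2}$ by the Weyl law for the Lipschitz domain $\Omega_1$. The error $-C\#\{j:a^{-2}\mu_j<\lambda\}\sim a^{n_1}\lambda^{n_1/2}$ is absorbed because $n_2\ge2$ and $\lambda\ge a^{-2}\mu_1$ is large.

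The main obstacle is making the second regime uniform: the constants $c,C$ for $\Omega_2$ must be valid for all $t>0$, not just asymptotically. We also need to check that the Neumann Riesz-mean inequality applies to the Lipschitz $\Omega_1$ with order $\gamma=n_2/2$. If $n_2=2$ gives $\gamma=1$ exactly, we rely on Kröger's bound. Matching the two regimes near $\lambda\approx a^{-2}\mu_1$ fixes $a_0$.
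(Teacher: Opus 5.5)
Your low-frequency step does not work as written. For $\lambda\le a^{-2}\mu_1(\Omega_1)$ put $t=a^2\lambda$. By \eqref{Cd1d2} the Weyl term of the product is
\[
L_{0,n}|a\Omega_1\times\Omega_2|\lambda^{\frac n2}=L_{\frac{n_2}{2},n_1}|\Omega_1|\,t^{\frac{n_1}{2}}\cdot L_{0,n_2}|\Omega_2|\lambda^{\frac{n_2}{2}} .
\]
So for $\lambda$ near $a^{-2}\mu_1(\Omega_1)$ it is the fixed fraction $r_*:=L_{\frac{n_2}{2},n_1}|\Omega_1|\mu_1(\Omega_1)^{\frac{n_1}{2}}$ of the leading term of $\sN^N_{\Omega_2}(\lambda)$, whatever $a$ is. Shrinking $a$ therefore buys nothing in this range. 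A bound $\sN^N_{\Omega_2}(\lambda)\ge c\max(1,\lambda^{\frac{n_2}{2}})$ with an unspecified $c$ cannot dominate the target, because by Weyl's law any such $c$ is at most $L_{0,n_2}|\Omega_2|$. What you actually need is $r_*\le1$, a lower bound for $\sN^N_{\Omega_2}$ with the exact leading coefficient $L_{0,n_2}|\Omega_2|$, and a positive margin: either $r_*<1$ or a positive second-order term. This comparison is the real content of the low range; the paper does not redo it but imports strict P\'olya for thin products from \cite[Theorem~1.1]{HW24}. It is also where $n_2\ge2$ enters, not through the growth of $\sN^N_{\Omega_2}$. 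The Kr\"oger--Laptev inequality \cite{AL97} of order $\gamma=\frac{n_2}{2}\ge1$ for $\Omega_1$, evaluated at $\Lambda=t\le\mu_1(\Omega_1)$ where the Riesz mean is just $t^{\frac{n_2}{2}}$, says precisely $L_{\frac{n_2}{2},n_1}|\Omega_1|\,t^{\frac{n_1}{2}}\le1$. It holds strictly by Lemma~\ref{rfofGJL} and the Aizenman--Lieb identity.

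Second, Lemma~\ref{Seeasy} gives only an unsigned remainder $O(\lambda^{\frac{n_2-1}{2}})$. The $\frac14$ two-term law requires a billiard condition, and a uniform bound $\sN^N_{\Omega_2}(t)\ge L_{0,n_2}|\Omega_2|t^{\frac{n_2}{2}}+c\,t^{\frac{n_2-1}{2}}-C$ with $c>0$ does not follow from Seeley. Your high-frequency argument needs $c>0$. With Seeley's sign, the sum $\sum_j(\lambda-a^{-2}\mu_j)_+^{\frac{n_2-1}{2}}$ becomes an error of size $a^{n_1}\lambda^{\frac{n-1}{2}}$. Only the boundary term of the thin factor beats it; that term has size $a^{n_1-1}$ and comes from Frank--Larson (Lemma~\ref{FL3term}, \eqref{FLRiesz}), which is exactly the paper's argument for $\lambda>a^{-2}\Lambda$.

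For smooth $\Omega_2$, however, the positive bound is available from the Safarov--Filonov estimate \eqref{eq:SF-one-sided}. With it, your high-frequency argument is a correct alternative route, and it also repairs the low range. Since Laptev's inequality holds at every $\Lambda$,
\[
\sN^N_{a\Omega_1\times\Omega_2}(\lambda)\ge L_{0,n}|a\Omega_1\times\Omega_2|\lambda^{\frac n2}+c\sum_j(\lambda-a^{-2}\mu_j)_+^{\frac{n_2-1}{2}}-C\,\#\{j:a^{-2}\mu_j<\lambda\}
\]
holds for all $\lambda>0$. For $\lambda\le a^{-2}\mu_1(\Omega_1)$ the right side equals $L_{0,n}|a\Omega_1\times\Omega_2|\lambda^{\frac n2}+c\lambda^{\frac{n_2-1}{2}}-C$. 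This gives the \ref{ENP} bound for $\lambda\ge\Lambda_*$, with $\Lambda_*$ independent of $a$ and $C_1$ of order $a^{n_1}$. For $\lambda<\Lambda_*$ you use $\sN^N_{a\Omega_1\times\Omega_2}\ge1$ and $L_{0,n}|a\Omega_1\times\Omega_2|\Lambda_*^{\frac n2}\le\frac12$ for small $a$. The regime split then becomes unnecessary. You obtain a proof using only the Kr\"oger--Laptev inequality, Weyl's law for $\Omega_1$ and \eqref{eq:SF-one-sided}, bypassing both Frank--Larson and \cite{HW24}. The cost is harmless: a second-order constant of size $a^{n_1}$ rather than the natural $a^{n_1-1}$.
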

 
\begin{proposition}\label{tile}
	Let $\Omega,\Omega'\subset\R^n$ be bounded domains. If $\Omega$ satisfies the \ref{ENP} inequality and $\Omega'$ tiles $\Omega$, then $\Omega'$ also satisfies the \ref{ENP} inequality.
\end{proposition}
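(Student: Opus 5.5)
We plan the standard tiling argument for Neumann counting functions, keeping track of the lower-order terms. Suppose $\Omega'$ tiles $\Omega$: there are $N$ congruent copies $\Omega'_1,\dots,\Omega'_N$ of $\Omega'$, pairwise disjoint and open, with $\overline\Omega=\bigcup_j\overline{\Omega'_j}$ up to null sets. Then $|\Omega|=N|\Omega'|$.

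The first step is the Neumann bracketing inequality
\[
\sN^N_\Omega(\lambda)\le \sum_{j=1}^N \sN^N_{\Omega'_j}(\lambda)=N\,\sN^N_{\Omega'}(\lambda),\qquad \forall\lambda>0 .
\]
To prove it, note that $H^1(\Omega)$ embeds isometrically, via restriction, into $\bigoplus_j H^1(\Omega'_j)$. This uses that $\partial\Omega'_j$ has measure zero, so the $L^2$ norm and the Dirichlet energy split exactly over the pieces. The min-max principle, comparing the Rayleigh quotients on the larger form domain, then gives the counting inequality.

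Combining this with the \ref{ENP} inequality for $\Omega$, we get
\[
N\,\sN^N_{\Omega'}(\lambda)\ge L_{0,n}N|\Omega'|\lambda^{\frac n2}+\max\left(C_1(\Omega)\lambda^{\frac{n-1}2},C_2(\Omega)\right).
\]
Dividing by $N$ gives the \ref{ENP} inequality for $\Omega'$ with
\[
C_1(\Omega')=\frac{C_1(\Omega)}{N},\qquad C_2(\Omega')=\frac{C_2(\Omega)}{N}.
\]
Since $N=|\Omega|/|\Omega'|$ is finite, both constants remain positive.

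The main point to check carefully is the form-domain inclusion. We need that any $u\in H^1(\Omega)$ restricts to $H^1(\Omega'_j)$, which is immediate. We also need that the spectra involved are discrete, so that min-max applies. Compactness of $H^1(\Omega'_j)\subset L^2(\Omega'_j)$ is assumed throughout the paper (for example, Lipschitz boundary), so $\sN^N_{\Omega'}$ is well defined. With that in place, the inequality follows.

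We also need the direct sum operator to have counting function equal to the sum of the individual counting functions. This holds because its eigenvalues are the union, with multiplicity, of the eigenvalues of the pieces.

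If "tiles" is understood to allow infinitely many copies, the argument fails. In our setting, however, $\Omega$ is bounded and the copies are congruent with positive volume, so $N$ is necessarily finite.
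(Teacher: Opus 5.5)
Your proof is correct and is the standard argument the paper has in mind when it omits the proof of this proposition. The Neumann bracketing inequality $\sN^N_\Omega(\lambda)\le N\,\sN^N_{\Omega'}(\lambda)$ (equivalently, Lemma~\ref{cover} with $h=1$), combined with $|\Omega|=N|\Omega'|$, gives the ENP inequality for $\Omega'$ with constants $C_1(\Omega)/N$ and $C_2(\Omega)/N$, exactly as you wrote.
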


Since the arguments are similar to those in \cite{AL97,HW24}, 
we provide only a sketch of the proofs for Propositions \ref{prop1} and \ref{prop2}. We also omit the proof of Proposition \ref{tile} since it is standard.

\begin{remark*}
	All domains considered in this subsection also satisfy the EDP inequality \eqref{AssDiri}. The proofs follow the corresponding Dirichlet arguments in the literature and contain no new ingredients; since our focus is on the Neumann case, we omit the details.
\end{remark*}

\subsection*{Plan of the paper}
The paper is organized as follows. In \S\ref{pre}, we provide the necessary preliminaries and preparations for the subsequent proofs. In \S\ref{pfgeneral}, we prove Theorem \ref{general}. In \S\ref{pfthm1.1}, we prove Theorem \ref{mthm} and give explicit calculations for the constants for cuboids. In \S\ref{sec3}, we identify the classes of domains that satisfy the \ref{ENP} inequality. Finally, in Section~\ref{app}, we provide explicit calculations for the disk.

\noindent\textbf{AI-use statement.}
The proof of Proposition \ref{pertil} and those in Section \ref{app} were initially prepared with the assistance of ChatGPT (OpenAI). The authors subsequently verified every step of the proof independently and take full responsibility for the correctness of the final argument.

\section{Inequalities for the counting functions and the Riesz means }\label{pre}

\subsection{Inequalities for general domains}

    Let $\Omega\subset \R^n$ be a bounded Lipschitz domain. The $i$-th nonzero eigenvalue $\mu_i(\Omega)$ of the Neumann Laplacian on $\Omega$ satisfies
    \begin{equation}\label{minmax}
\mu_i(\Omega)
=\min_{\substack{E\subset H^1(\Omega)\\ \dim E=i+1}}
\ \max_{\substack{f\in E\\ f\neq 0}}
\frac{\displaystyle\int_\Omega |\nabla f|^2\,\mathrm d x}
{\displaystyle\int_\Omega f^2\,\mathrm dx}.
\end{equation}

\begin{lemma}\cite[Lemma 2.2]{NS05}\label{cover}
    Let $\Omega \subset \mathbb{R}^n$ be a bounded Lipschitz domain, and suppose that
    $$\bar \Omega=\bar\Omega_1\cup\cdots\cup\bar\Omega_m$$
    where each $\Omega_i$ is open and every $x\in \Omega$ belongs to at most $h$ of the sets $\Omega_i$. Then 
    \[
    \sN^N_\Omega(\lambda)\le \sum_{i=1}^m \sN^N_{\Omega_i}(h\lambda), \quad  \forall  \lambda>0.
    \]
\end{lemma}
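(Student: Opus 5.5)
The statement is the covering lemma, Lemma~\ref{cover}: if $\bar\Omega=\bar\Omega_1\cup\cdots\cup\bar\Omega_m$ and each point lies in at most $h$ of the $\Omega_i$, then $\sN^N_\Omega(\lambda)\le\sum_i \sN^N_{\Omega_i}(h\lambda)$. The plan is a min-max argument via \eqref{minmax}, arguing by contradiction. Fix $\lambda>0$ and set $N_i:=\sN^N_{\Omega_i}(h\lambda)$ and $N:=\sum_i N_i$. Suppose $\sN^N_\Omega(\lambda)>N$. Let $E$ be the span of the Neumann eigenfunctions of $\Omega$ with eigenvalues $<\lambda$. Then $\dim E>N$ and
\[
\int_\Omega|\nabla f|^2\,dx<\lambda\int_\Omega f^2\,dx \qquad \text{for every } f\in E\setminus\{0\}.
\]

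For each $i$, let $P_i$ be the $L^2(\Omega_i)$-orthogonal projection onto the span of the first $N_i$ Neumann eigenfunctions of $\Omega_i$, i.e. those with eigenvalues $<h\lambda$. Consider the linear map
\[
T:E\to\bigoplus_i \R^{N_i},\qquad f\mapsto \bigl(P_i(f|_{\Omega_i})\bigr)_i .
\]
Since $\dim E>N$, there is a nonzero $f\in E$ with $P_i(f|_{\Omega_i})=0$ for all $i$. Each restriction $f|_{\Omega_i}$ is an $H^1(\Omega_i)$ function orthogonal to all eigenfunctions of $\Omega_i$ with eigenvalue $<h\lambda$. The spectral theorem for the Neumann Laplacian on $\Omega_i$ then gives
\[
\int_{\Omega_i}|\nabla f|^2\,dx\ge h\lambda\int_{\Omega_i}f^2\,dx .
\]
This step uses that each $\Omega_i$ has discrete Neumann spectrum, which is the standing assumption; if $N_i$ is infinite the statement is trivial.

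Now sum over $i$. The closures cover $\bar\Omega$, and the boundaries of the (implicitly Lipschitz) pieces are null sets, so $\sum_i\int_{\Omega_i}f^2\ge\int_\Omega f^2$. The overlap bound gives $\sum_i\int_{\Omega_i}|\nabla f|^2\le h\int_\Omega|\nabla f|^2$. Combining,
\[
h\int_\Omega|\nabla f|^2\,dx\ \ge\ h\lambda\int_\Omega f^2\,dx,
\]
so $\int_\Omega|\nabla f|^2\ge\lambda\int_\Omega f^2$. This contradicts the strict inequality on $E$, and hence $\sN^N_\Omega(\lambda)\le N$.

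The only delicate point is the measure-theoretic covering claim: that $\Omega\setminus\bigcup_i\Omega_i$ is null, so that the $L^2$ norm of $f$ is captured by the pieces. This needs $|\partial\Omega_i|=0$, which holds in every application in the paper since the pieces are Lipschitz or polyhedral. The rest is the standard Neumann bracketing argument, with the factor $h$ absorbing the overlap.
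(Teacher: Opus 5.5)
Your proof is correct and is essentially the standard variational argument behind the cited Netrusov--Safarov lemma; the paper itself only cites that lemma and gives no proof. Restriction maps the low Neumann modes of $\Omega$ into $\bigoplus_i H^1(\Omega_i)$ with $\sum_i\|f\|^2_{L^2(\Omega_i)}\ge\|f\|^2_{L^2(\Omega)}$ and $\sum_i\|\nabla f\|^2_{L^2(\Omega_i)}\le h\|\nabla f\|^2_{L^2(\Omega)}$, and your projection/dimension count is the min--max principle applied to this. You are right that the first inequality needs $|\Omega\setminus\bigcup_i\Omega_i|=0$, which the closure hypothesis alone does not give but $|\partial\Omega_i|=0$ for each $i$ does. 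The inclusion $\Omega_i\subset\Omega$, which you use tacitly for the restriction and the overlap bound, follows because $\Omega_i$ is open, $\Omega_i\subset\bar\Omega$, and $\operatorname{int}\bar\Omega=\Omega$ for Lipschitz $\Omega$.
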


\begin{lemma}\cite{See1,See2}\label{Seeasy}
	Let $\Omega\subset \R^n$ be a bounded domain with smooth boundary, then as $\lambda\to +\infty$,
	\[
	\sN^{N}_\Omega(\lambda)=L_{0,n}|\Omega|\lambda^{\frac n2}+O(\lambda^{\frac{n-1}2}).
	\]
\end{lemma}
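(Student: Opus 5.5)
Throughout, write $\tau=\sqrt{\lambda}$ and $N(\tau):=\sN^N_\Omega(\tau^2)$. Since Lemma~\ref{Seeasy} is a classical result, I would follow the Levitan--Avakumovi\'c--H\"ormander route via the wave group. The plan is to prove a local estimate for the smoothed spectral measure and then remove the smoothing with a Tauberian argument.

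\textbf{Step 1: reduction to a smoothed spectral measure.} Let $\{\varphi_k\}$ be an $L^2$-orthonormal basis of Neumann eigenfunctions with $-\Delta\varphi_k=\mu_k\varphi_k$. Set $e(x,\tau)=\sum_{\sqrt{\mu_k}<\tau}\varphi_k(x)^2$, so that $N(\tau)=\int_\Omega e(x,\tau)\,dx$. Fix an even $\rho\in\mathcal S(\R)$ with $\rho\ge0$, $\rho(0)>0$, and $\hat\rho$ supported in $(-\varepsilon,\varepsilon)$, where $\varepsilon>0$ is less than the injectivity radius of the normal exponential map of $\partial\Omega$. The key identity is
\[
(\rho*dN)(\tau)=\frac1{2\pi}\int\hat\rho(t)e^{it\tau}\,\mathrm{Tr}\, e^{-it\sqrt{-\Delta_N}}\,dt .
\]
Only the wave kernel for $|t|<\varepsilon$ therefore enters. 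I then invoke H\"ormander's Fourier Tauberian lemma: $dN\ge0$ is a positive measure, so the bound
\[
(\rho*dN)(\tau)=(2\pi)^{-n}n\,\omega_n|\Omega|\,\tau^{n-1}+O(\tau^{n-2})
\]
implies $N(\tau)=(2\pi)^{-n}\omega_n|\Omega|\tau^n+O(\tau^{n-1})$. This last expression equals $L_{0,n}|\Omega|\lambda^{n/2}+O(\lambda^{(n-1)/2})$.

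\textbf{Step 2: the interior contribution.} By finite propagation speed, for $x$ with $\operatorname{dist}(x,\partial\Omega)>|t|$ the Neumann wave kernel $\cos(t\sqrt{-\Delta_N})(x,x)$ coincides with the free one on $\R^n$. Its smoothed Fourier transform gives exactly the Weyl density $(2\pi)^{-n}n\omega_n\tau^{n-1}$ at each such point, up to $O(\tau^{-\infty})$ errors. Integrating over $\{\operatorname{dist}(x,\partial\Omega)>\varepsilon\}$ gives the main term with that region's volume.

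\textbf{Step 3: the boundary layer.} This is the main obstacle. In the collar $\{\operatorname{dist}(x,\partial\Omega)<\varepsilon\}$ I would use boundary normal coordinates $(y,s)\in\partial\Omega\times[0,\varepsilon)$. I would first try Seeley's construction: a parametrix for the Neumann resolvent, or equivalently for the wave group, written as the free term plus a reflected term given by a boundary-layer pseudodifferential operator built from the symbol of $\Delta$ in these coordinates.

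The estimate needed is a uniform pointwise bound
\[
(\rho*d_\tau e)(x,\tau)=(2\pi)^{-n}n\omega_n\tau^{n-1}+O\bigl(\tau^{n-1}(1+\tau s)^{-1}\bigr)+O(\tau^{n-2}),
\]
so that the reflected contribution decays away from the boundary on scale $1/\tau$. Integrating in $s\in(0,\varepsilon)$ costs only $O(\tau^{n-2}\log\tau)$ from this term. Since we need only an $O(\tau^{n-1})$ remainder for $N$, i.e.\ $O(\tau^{n-2})$ for the smoothed density, I would weaken this if necessary. Even the crude bound $|(\rho*d_\tau e)(x,\tau)|\le C\tau^{n-1}$, uniform up to the boundary, suffices on the strip $s<1/\tau$, which has volume $O(1/\tau)$. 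On $s>1/\tau$, stationary phase for the reflected wave (whose phase has no critical points when $|t|<\varepsilon$ and the point is off the diagonal) gives the decay.

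The uniform bound up to the boundary is where smoothness of $\partial\Omega$ is used essentially. One can obtain it via the reflection principle in normal coordinates: extend $\varphi_k$ evenly in $s$, producing an elliptic operator with Lipschitz coefficients across $\partial\Omega$. A local Sobolev estimate for spectral clusters then gives $\sum_{\tau\le\sqrt{\mu_k}<\tau+1}\varphi_k(x)^2\le C\tau^{n-1}$.

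Combining Steps 2 and 3 yields the smoothed asymptotics of Step 1, and the Tauberian lemma concludes the proof.
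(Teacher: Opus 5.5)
The paper does not prove Lemma~\ref{Seeasy}; it quotes Seeley. Your overall scheme is the standard one: a smoothed wave trace with $\hat\rho$ supported near $0$, finite propagation speed in the interior, and H\"ormander's Tauberian lemma. Steps 1--2 are fine. Step~3, however, carries the entire content of Seeley's theorem, and it does not close.

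\textbf{The log loss.} The pointwise bound you aim for, $O(\tau^{n-1}(1+\tau s)^{-1})$, integrates over the collar to $O(\tau^{n-2}\log\tau)$. The Tauberian lemma then gives only $N(\tau)=(2\pi)^{-n}\omega_n|\Omega|\tau^n+O(\tau^{n-1}\log\tau)$, which is the classical bound with a logarithmic loss that Seeley's result removes. No ``weakening'' gets rid of this logarithm, and it matters here. In the proof of Theorem~\ref{general}, a term $\alpha(U)(a^2\lambda)^{\frac{n-1}{2}}\log(2+a^2\lambda)$ could never be absorbed into $C_1(\Omega)\lambda^{\frac{n-1}{2}}$ for all large $\lambda$.

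\textbf{The stationary-phase claim is wrong.} For $s<\varepsilon/2$, the normally reflected ray leaves $x$ and returns to $x$ at time $2s\in\operatorname{supp}\hat\rho$, so the reflected phase does have a critical point. In the flat half-space model, the reflected part of the smoothed density is exactly $R(s,\tau)=(2\pi)^{-n}\int_{\R^n}\rho(\tau-|\xi|)e^{2is\xi_n}\,d\xi$. For $\tau s\gg1$ it has amplitude $\asymp\tau^{n-1}(\tau s)^{-\frac{n-1}{2}}|\hat\rho(2s)|$ and oscillates like $e^{\pm 2is\tau}$. Consequently:
\begin{itemize}
\item for $n=2$ your bound is false: the decay is only $(\tau s)^{-1/2}$, so $\int_0^\varepsilon|R|\,ds\sim\tau^{1/2}$ against the required $O(1)$;
\item for $n=3$ it is sharp and produces the logarithm;
\item only for $n\ge4$ does an absolute-value estimate give $O(\tau^{n-2})$.
\end{itemize}

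\textbf{The missing idea.} In the collar you must not take absolute values; the reflected term has to be integrated over $x$ first, using its oscillation in the normal variable. In the model, do the $s$-integral first with $\int_0^\infty e^{2is\xi_n}\,ds=\frac{\pi}{2}\delta(\xi_n)+\frac{i}{2}\,\mathrm{p.v.}\frac{1}{\xi_n}$; the principal-value part cancels by evenness in $\xi_n$. This gives $\int_0^\infty R\,ds=\frac{\pi}{2}(2\pi)^{-n}\int_{\R^{n-1}}\rho(\tau-|\xi'|)\,d\xi'=O(\tau^{n-2})$. Multiplied by $|\partial\Omega|$ (with $\int\rho=1$), this is to leading order exactly $\frac{d}{d\tau}\bigl(\frac14L_{0,n-1}|\partial\Omega|\tau^{n-1}\bigr)$, the density of the second Weyl term. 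For $n=2,3$ this is a conditional cancellation, not an absolute bound.

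\textbf{The curved case.} Carrying this out on a curved boundary requires a reflected parametrix that is uniform across the collar. That includes near-glancing directions, where the geometric-optics reflected phase degenerates. This is what the boundary-layer analysis of \cite{See1,See2} supplies, so ``I would try Seeley's construction'' at this point simply invokes the result being proved.

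\textbf{The crude cluster bound.} The bound $\sum_{\tau\le\sqrt{\mu_k}<\tau+1}\varphi_k(x)^2\le C\tau^{n-1}$, uniformly up to $\partial\Omega$, does not follow from a local Sobolev estimate. Rescaled elliptic estimates on balls of radius $1/\tau$ give only $C\tau^{n}$. With that, the strip $\{s<1/\tau\}$ contributes $O(\tau^{n-1})$ to the smoothed trace, a full power of $\tau$ too much. The $\tau^{n-1}$ bound is true, but it is itself a theorem requiring propagation over times of order one (the main result of \cite{Grieser2002,Sogge2002}). It is not a local estimate for the reflected Lipschitz operator.
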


For a bounded domain $\Omega\subset\R^n$, denote 
\begin{align*}
	I(\Omega):=\inf_{a\in\R^n}\int_\Omega|x-a|^2\,dx,\qquad J_1:= \frac{ |\Omega|^{1/2}}{2|I(\Omega)|^{1/2}}.
\end{align*}

\begin{lemma}\cite[Theorem 1.3]{GJL26}\label{rfofGJL}
	With the notations above,  
	\begin{align}\label{riesz-neumann}
		\sum_{\mu_k(\Omega)<\lambda}\!(\lambda\!-\!\mu_k(\Omega)) \!\ge\! L_{1,n}|\Omega|\lambda^{1+\frac n2} \left(1\!+\!\frac{n(n\!+\!2)}{3} \frac{J_1^4}  {\gamma^{1/2}(4J_1^2 \!+\!\lambda)^{3/2}}
		\right), \quad  \forall  \lambda>0.
	\end{align}
\end{lemma}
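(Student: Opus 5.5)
The plan is to prove \eqref{riesz-neumann} by a Kröger-type variational argument for Riesz means, with a quantitative correction added on top. I note first that the symbol $\gamma$ in the displayed bound is not defined in the text above. I will treat it as a fixed numerical normalisation constant coming from \cite{GJL26}, and track it as a free constant to be fixed at the end.

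\textbf{Step 1: the variational principle and the plain Kröger bound.} For any bounded self-adjoint operator $0\le\Gamma\le 1$ of trace class with range in $H^1(\Omega)$,
\[
\sum_{\mu_k(\Omega)<\lambda}(\lambda-\mu_k(\Omega))\ \ge\ \operatorname{Tr}\big(\Gamma(\lambda+\Delta_N)\big).
\]
I would take $\Gamma_w=(2\pi)^{-n}\int_{\R^n} w(\xi)\,|e_\xi\rangle\langle e_\xi|\,d\xi$, where $e_\xi(x)=e^{ix\cdot\xi}$ is restricted to $\Omega$ and $w\ge0$ is a weight. Then
\[
\operatorname{Tr}\big(\Gamma_w(\lambda+\Delta_N)\big)=(2\pi)^{-n}|\Omega|\int w(\xi)\,(\lambda-|\xi|^2)\,d\xi,
\]
and $\langle f,\Gamma_w f\rangle=(2\pi)^{-n}\int w\,|\widehat{f\mathbf 1_\Omega}|^2$. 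With $w=\mathbf 1_{B(0,\sqrt\lambda)}$, Plancherel gives $\Gamma_w\le 1$ and the bound $L_{1,n}|\Omega|\lambda^{1+n/2}$. The task is to show this choice is wasteful by an amount of order $J_1^4\lambda^{n/2-1/2}$ for large $\lambda$, and of order $J_1\lambda^{n/2+1}$-ish for small $\lambda$. These two regimes are exactly what the factor $(4J_1^2+\lambda)^{-3/2}$ interpolates.

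\textbf{Step 2: the improvement.} The slack comes from the fact that $\widehat{f\mathbf 1_\Omega}$ cannot concentrate: it is smooth in $\xi$, with
\[
\int|\nabla_\xi \widehat{F}|^2\,d\xi=(2\pi)^n\int|x-a|^2|F|^2\,dx
\]
for $F=e^{-ia\cdot x}f\mathbf 1_\Omega$. I would therefore enlarge the weight to $w=\mathbf 1_B+\varepsilon\phi$, with $\phi$ a radial profile supported in a shell $\sqrt\lambda\le|\xi|\le\sqrt\lambda+\delta$. The shell contributes negatively to the trace, but only by $O(\varepsilon\delta^2\lambda^{(n-1)/2})$. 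Alternatively, I would try $w=(1+\varepsilon)\mathbf 1_B$ and then prove the operator bound $\Gamma_w\le 1$ via an uncertainty inequality. Such an inequality says that a normalised $F$ supported in $\Omega$ cannot carry all its Fourier mass in $B$ while also having ... — it should cost a factor depending on the moment of inertia. The minimisation over $a$ is what produces $I(\Omega)$, and hence $J_1$.

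\textbf{Step 3: the main obstacle.} I expect the hardest step to be turning this uncertainty heuristic into a sharp operator inequality. One needs a bound on $\sup_{\|F\|=1,\ \operatorname{supp}F\subset\Omega}\int_B|\widehat F|^2$ that is explicitly below $(2\pi)^n$, in terms of $\int|x-a|^2|F|^2\le$ something controlled by $I(\Omega)$. The difficulty is that $\sup|x-a|^2$ is the naive quantity, whereas the statement only involves $I(\Omega)$. My first attempt would be a rank-one-corrected trial family $e_\xi(x)\,(1+\eta\,\xi\cdot(x-a))$, which is natural since it is the first-order expansion of a shifted plane wave. Computing its Gram and energy matrices exactly uses only $\int_\Omega(x-a)\,dx=0$ (choose $a$ the centroid) and $\int_\Omega|x-a|^2=I(\Omega)$. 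Optimising $\eta$ as a function of $|\xi|^2/(4J_1^2)$ should yield the gain $\frac{n(n+2)}{3}J_1^4(4J_1^2+\lambda)^{-3/2}$. In this computation the factor $n(n+2)/3$ arises from averaging $|\xi\cdot\omega|^4$-type moments over the ball. The remaining work is a routine but careful optimisation of the radial integrals in $\xi$.
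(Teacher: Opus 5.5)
\textbf{There is a genuine gap.} The paper gives no proof of this lemma; it is quoted from \cite{GJL26}. Judged on its own, your proposal never produces the quantitative gain over Kr\"oger's bound, and it misreads the target.

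First, $\gamma$ cannot be a numerical constant. Under $\Omega\mapsto t\Omega$, $\lambda\mapsto t^{-2}\lambda$, the Riesz mean and $L_{1,n}|\Omega|\lambda^{1+\frac n2}$ both scale by $t^{-2}$. Since $J_1\mapsto t^{-1}J_1$, the quantity $J_1^4(4J_1^2+\lambda)^{-3/2}$ scales by $t^{-1}$. With $\gamma$ fixed, the bracket grows like $J_1$ at fixed $\lambda/J_1^2$, so the inequality fails for small dilates and cannot be "fixed at the end". The intended reading is $\gamma=\lambda$. Your predicted sizes of the gain are therefore off by $\lambda^{1/2}$; for large $\lambda$ the gain is $\asymp|\Omega|J_1^4\lambda^{\frac n2-1}$.

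None of your routes closes:
\begin{itemize}
\item Extra weight on $|\xi|>\sqrt\lambda$ can only lower $\int w(\lambda-|\xi|^2)$, and you do not say what it is traded for.
\item $w=(1+\varepsilon)\mathbf 1_B$ needs $\|\mathbf 1_\Omega\mathcal F^{-1}\mathbf 1_B\mathcal F\mathbf 1_\Omega\|\le(1+\varepsilon)^{-1}$. Testing on a fixed bump in $C_c^\infty(\Omega)$ shows this norm is $1-O(\lambda^{-N})$ for every $N$, so no power-law gain can come from it.
\item In Step 3, the constraint $0\le\Gamma\le1$ for the family $e_\xi(1+\eta\,\xi\cdot(x-a))$ is the entire difficulty and is not addressed. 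With $a$ the centroid and $\eta$ real, each such function even has Rayleigh quotient larger than $|\xi|^2$.
\item The factor $\frac{n(n+2)}3$ does not come from $|\xi\cdot\omega|^4$-moments.
\end{itemize}

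\textbf{The missing idea} is to stop improving the trial operator and instead bound the slack in Kr\"oger's argument from below, using the eigenfunctions themselves. Let $c_k(\xi)=\int_\Omega\varphi_k(x)e^{-i(x-a)\cdot\xi}\,dx$ and
\[ G(\xi)=(2\pi)^{-n}\sum_{\mu_k<\lambda}(\lambda-\mu_k)|c_k(\xi)|^2 . \]
Three facts follow.
\begin{enumerate}
\item $\int_{\R^n}G=\sum_{\mu_k<\lambda}(\lambda-\mu_k)$.
\item Parseval together with $\sum_k\mu_k|c_k|^2=|\xi|^2|\Omega|$ gives $G\ge(2\pi)^{-n}|\Omega|(\lambda-|\xi|^2)_+$. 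This alone is Kr\"oger's bound.
\item Cauchy--Schwarz with $(\lambda-\mu_k)_+\le\lambda$, plus Bessel's inequality for $(x-a)e^{i(x-a)\cdot\xi}$, gives $|\nabla\sqrt G|\le(2\pi)^{-n/2}\sqrt{\lambda I(\Omega)}$. This is how $\int_\Omega|x-a|^2$, rather than $\sup|x-a|^2$, enters.
\end{enumerate}
Since $\sqrt{(\lambda-|\xi|^2)_+}$ has infinite slope at $|\xi|=\sqrt\lambda$, $\sqrt G$ must overshoot there. Set $K=\sqrt\lambda/(2J_1)$ and $\delta=\sqrt{1+K^2}-K$. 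Comparing along each ray with the point of radius $K\sqrt\lambda/\sqrt{1+K^2}$ yields
\[ G(\xi)\ge(2\pi)^{-n}|\Omega|\bigl(\sqrt\lambda\,\delta-K(|\xi|-\sqrt\lambda)\bigr)_+^2,\qquad |\xi|\ge\sqrt\lambda . \]
Integrate this over $|\xi|>\sqrt\lambda$, using $|\xi|^{n-1}\ge\lambda^{\frac{n-1}2}$. This adds $L_{1,n}|\Omega|\lambda^{1+\frac n2}\cdot\frac{n(n+2)}{6}\frac{\delta^3}{K}$ to Kr\"oger's bound. Here $\frac{n(n+2)}2=nL_{0,n}/L_{1,n}$ comes from the area of the sphere, and the $\frac13$ from integrating the squared linear profile. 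Finally, $\delta\ge(2\sqrt{1+K^2})^{-1}$ gives $\frac{\delta^3}{K}\ge\frac{2J_1^4}{\lambda^{1/2}(4J_1^2+\lambda)^{3/2}}$. This is exactly the stated inequality with $\gamma=\lambda$.
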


By \cite[Theorem 1.1]{FL25}, for any bounded domain $\Omega\subset\mathbb R^d$ ($d\ge 2$) with Lipschitz boundary and any $\gamma> 0$, as $\lambda \to +\infty$,
\begin{equation}\label{FLRiesz}
		\sum_{\mu_k(\Omega)< \lambda} (\lambda-\mu_k(\Omega))^\gamma= L_{\gamma,d}|\Omega| \lambda^{\gamma+\frac{d}{2}}+ \frac{1}{4} L_{\gamma,d-1} |\partial \Omega| \lambda^{\gamma+\frac{d-1}{2}}+ \mathrm o(\lambda^{\gamma+\frac{d-1}{2}}).
\end{equation}
Consequently,  
\begin{lemma}\label{FL3term}
	Let $\Omega\subset\mathbb{R}^d$ ($d\ge 2$) be a bounded domain with Lipschitz boundary. For each $\gamma>0$, there exists a constant $\Lambda(\Omega,\gamma)>0$ such that for all $\lambda > \Lambda(\Omega,\gamma)$,
	\begin{equation*}
		\sum_{\mu_k(\Omega)< \lambda} (\lambda-\mu_k(\Omega))^\gamma\ge L_{\gamma,d}|\Omega| \lambda^{\gamma+\frac{d}{2}}+ \frac{1}{5} L_{\gamma,d-1} |\partial \Omega| \lambda^{\gamma+\frac{d-1}{2}}.
	\end{equation*}
\end{lemma}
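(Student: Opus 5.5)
This is a direct consequence of \eqref{FLRiesz}, so the plan is just to turn the little-$o$ remainder into an explicit lower bound.

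The argument follows the elementary "$\varepsilon$-absorption" route. Fix $\gamma>0$ and write \eqref{FLRiesz} as
\[
\sum_{\mu_k(\Omega)<\lambda}(\lambda-\mu_k(\Omega))^\gamma
= L_{\gamma,d}|\Omega|\lambda^{\gamma+\frac d2}
+\frac14 L_{\gamma,d-1}|\partial\Omega|\lambda^{\gamma+\frac{d-1}2}
+R(\lambda),
\]
where $R(\lambda)=o(\lambda^{\gamma+\frac{d-1}{2}})$ as $\lambda\to+\infty$.

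The key observation is that the target coefficient $\frac15$ is strictly smaller than $\frac14$. The gap between the two coefficients is
\[
\Bigl(\frac14-\frac15\Bigr)L_{\gamma,d-1}|\partial\Omega| = \frac1{20}L_{\gamma,d-1}|\partial\Omega|.
\]
This quantity is strictly positive. Indeed, $L_{\gamma,d-1}>0$ by \eqref{Lgammad}, since all the Gamma factors are positive. Also $|\partial\Omega|>0$, because $\Omega$ is a nonempty bounded Lipschitz domain in $\R^d$ with $d\ge2$; its boundary carries positive $(d-1)$-dimensional Hausdorff measure, for instance by the isoperimetric inequality.

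Set $\varepsilon:=\frac1{20}L_{\gamma,d-1}|\partial\Omega|>0$. By the definition of the little-$o$ symbol there exists $\Lambda(\Omega,\gamma)>0$ such that
\[
|R(\lambda)|\le \varepsilon\,\lambda^{\gamma+\frac{d-1}2}\qquad\text{for all }\lambda>\Lambda(\Omega,\gamma).
\]
Substituting $R(\lambda)\ge-\varepsilon\lambda^{\gamma+\frac{d-1}2}$ into the expansion gives the boundary coefficient $\frac14-\frac1{20}=\frac15$, which is exactly the claimed inequality.

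The only point needing care is that $\Lambda$ depends on both $\Omega$ and $\gamma$. This is because the remainder in \eqref{FLRiesz} is not claimed to be uniform in $\gamma$. The statement allows this dependence, so there is no real obstacle. Beyond that, the only thing to check is the positivity of $|\partial\Omega|$, which was handled above.
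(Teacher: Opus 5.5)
Your proposal is correct and matches the paper's approach: the paper simply states the lemma as an immediate consequence of the Frank--Larson two-term asymptotics \eqref{FLRiesz}, and your argument, absorbing the $o(\lambda^{\gamma+\frac{d-1}{2}})$ remainder into the gap $\bigl(\frac14-\frac15\bigr)L_{\gamma,d-1}|\partial\Omega|>0$, is exactly the step left implicit there. Your check that $L_{\gamma,d-1}|\partial\Omega|>0$ is the only detail worth recording, and you handled it correctly.
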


\subsection{Precise inequalities for cubes and intervals}

\begin{lemma}\label{upbdIn}
	Let $I_n=(0,\pi)^n$. Then, for every $\lambda>0$,
	\[
	\mathcal{N}_{I_n}^N(\lambda)
	\le
	L_{0,n}\pi^n\lambda^{\frac n2}
	+
	\max\left(2n\lambda^{\frac{n-1}{2}},1\right).
	\]
\end{lemma}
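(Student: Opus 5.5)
The plan is to count lattice points directly. The Neumann eigenvalues of $I_n=(0,\pi)^n$ are $|k|^2$ with $k\in\Z_{\ge0}^n$, so
\[
\sN^N_{I_n}(\lambda)=\#\{k\in\Z_{\ge0}^n:\ |k|^2<\lambda\}.
\]
Write $r=\sqrt\lambda$. Attach to each such $k$ the unit cube $k+[0,1)^n$. These cubes are pairwise disjoint and lie in the first orthant. Since $|x|\le |k|+\sqrt n$ for $x\in k+[0,1)^n$, all of them sit inside the orthant part of the ball of radius $r+\sqrt n$. That bound is too crude, since it produces a $\sqrt n$-dependent coefficient of $\lambda^{(n-1)/2}$. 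A better approach is inclusion–exclusion over the coordinate faces, or equivalently shifting the cubes to $k-[0,1)^n$ for $k$ with all coordinates $\ge1$.

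\textbf{Step 1 (decomposition by zero coordinates).} Split $\{k\in\Z_{\ge0}^n:|k|<r\}$ according to the set $S$ of indices where $k_i=0$. For each $S$ with $|S|=s$, the remaining coordinates form a point of $\Z_{\ge1}^{n-s}$ with norm $<r$. For $m\ge1$ put $P_m(r):=\#\{j\in\Z_{\ge1}^m:|j|<r\}$, and set $P_0:=1$. Then
\[
\sN^N_{I_n}(\lambda)=\sum_{s=0}^n\binom ns P_{n-s}(r).
\]
The cubes $j-[0,1)^m$ for $j\in\Z^m_{\ge1}$ are disjoint. Each lies in $\{x\in\R^m_{\ge0}:|x|<r\}$, because every point $x$ of such a cube satisfies $0\le x\le j$ componentwise. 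Hence $P_m(r)\le 2^{-m}\omega_m r^m=L_{0,m}\pi^m\lambda^{m/2}$, using $L_{0,m}=\omega_m/(2\pi)^m$.

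\textbf{Step 2 (the leading and subleading terms).} The term $s=0$ gives $P_n(r)$. For the sharp leading term I cannot simply add $n P_{n-1}$ on top of the volume bound, since that overcounts. The intended route is to use the $\Z_{\ge1}$-shift for all points at once: the cubes $k+[0,1)^n$ for all $k\in\Z_{\ge0}^n$ with $|k|<r$ are disjoint. A cube $k+[0,1)^n$ leaves the ball of radius $r$ only if $k$ lies within distance $\sqrt n$ of the sphere. I would rather use the exact identity
\[
\#\{k\in\Z_{\ge0}^n:|k|<r\}=\#\{j\in\Z_{\ge1}^n:|j-\mathbf 1|<r\}
\]
and note that $j-[0,1)^n\subset\{x\ge0: |x|<r\}\cup(\text{region where }|x-\mathbf 1|<r\text{ but }|x|\ge r)$. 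The simpler route is to bound by Step 1:
\[
\sN^N_{I_n}(\lambda)\le L_{0,n}\pi^n\lambda^{n/2}+\sum_{s\ge1}\binom ns L_{0,n-s}\pi^{n-s}\lambda^{(n-s)/2}.
\]
Here $L_{0,m}\pi^m=\omega_m/2^m$, and $\sum_{s\ge1}\binom ns\omega_{n-s}2^{-(n-s)}\lambda^{(n-s)/2}$ must be dominated by $\max(2n\lambda^{(n-1)/2},1)$.

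\textbf{Step 3 (the main obstacle: comparing the lower-order sum with the max).} For $\lambda\ge1$, each term with $s\ge1$ is at most $\binom ns\omega_{n-s}2^{-(n-s)}\lambda^{(n-1)/2}$. So it suffices to prove
\[
\sum_{s=1}^n\binom ns\frac{\omega_{n-s}}{2^{n-s}}\le 2n.
\]
This fails for large $n$, since it behaves like $(1+\text{const})^n$. The crude bound is therefore not enough, and I expect this to be the real difficulty. The fix is to avoid splitting into faces and instead bound $P$ on the closed orthant directly. Each $k\in\Z^n_{\ge0}$ with $|k|<r$ gets the cube $k+[0,1)^n$, and I intersect with the region $\{x\ge0:|x|<r\}\cup\{x\ge0:\ \exists i,\ x_i<1\}$. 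A point $x$ in such a cube with all $x_i\ge1$ has $x-\mathbf 1\le k$, so $|x-\mathbf 1|<r$. Hence the union lies in $\{x\ge 0: |x|<r\}$ together with the slabs $\{0\le x_i<1\}$ cut down to the lifted ball. Each slab contributes at most $\omega_{n-1}2^{-(n-1)}(\lambda+\dots)^{(n-1)/2}$. I would then check $n\,\omega_{n-1}2^{-(n-1)}\le 2n$, which holds since $\omega_{m}\le 2^{m+1}$, and handle $\lambda<1$ separately. In that range only $k=0$ counts, so $\sN=1$, matching the max. The delicate point is tracking the slab regions exactly, and I would try a one-dimensional fibering induction on $n$: write $\sN_{I_n}(\lambda)=\sum_{j\ge0,\,j^2<\lambda}\sN_{I_{n-1}}(\lambda-j^2)$, then compare the sum with the integral $\int_0^{r}$ plus the $j=0$ boundary term.
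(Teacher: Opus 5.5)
Your Step 1 is exactly the decomposition the paper uses, and the bound $P_m(r)\le 2^{-m}\omega_m r^m$ via the cubes $j-[0,1)^m$ is correct. The genuine gap is Step 3.

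You abandon this route because splitting at $\lambda=1$ leads to the false inequality $\sum_{s\ge1}\binom ns\omega_{n-s}2^{-(n-s)}\le 2n$. But the failure comes from the threshold, not from the decomposition. After dividing by $\lambda^{\frac{n-1}{2}}$, the lower-order part is $\sum_{j=0}^{n-1}\binom nj\omega_j2^{-j}\lambda^{-\frac{n-1-j}{2}}$, which decreases in $\lambda$. At $\lambda=1$ it can exceed $2n$; for $n=3$ it equals $4+\frac{3\pi}{4}>6$. For $\lambda\ge4$, however, one has $\lambda^{j/2}2^{-j}\le 2^{1-n}\lambda^{\frac{n-1}{2}}$ for $0\le j\le n-1$. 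So it suffices to prove $2^{1-n}\sum_{j=0}^{n-1}\binom nj\omega_j\le 2n$. This follows from $\omega_0=1$, $\omega_j\le 2j$ for $j\ge1$, and $\sum_{j=1}^{n-1}j\binom nj=n(2^{n-1}-1)$, which together give $2n+(1-2n)2^{1-n}\le 2n$.

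The range $0<\lambda<4$ is then a finite check. Only $\{0,1\}$-vectors with at most three entries equal to $1$ are counted, so $\sN^N_{I_n}(\lambda)=\sum_{i=0}^{k}\binom ni$ for $\lambda\in(k,k+1]$, $k=0,1,2,3$. One verifies directly that this is at most $\max\bigl(2nk^{\frac{n-1}{2}},1\bigr)$. Your small-$\lambda$ discussion covers only $k=0$. This threshold-at-$4$ argument is the paper's proof.

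Neither alternative you sketch closes the gap.

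In the slab argument, the part of the union of cubes lying in $\{0\le x_i<1\}$ is exactly the union of the cubes with $k_i=0$. Its volume is therefore $\sN^N_{I_{n-1}}(\lambda)$. This is never smaller than $\omega_{n-1}2^{-(n-1)}\lambda^{\frac{n-1}{2}}$, since the cubes $k+[0,1)^{n-1}$ with $k\in\Z^{n-1}_{\ge0}$, $|k|<r$, cover $\{y\ge0:|y|<r\}$. So the unspecified ``$\dots$'' hides exactly the lower-order mass you need to control. What the argument actually yields is the recursion $\sN^N_{I_n}(\lambda)\le 2^{-n}\omega_n\lambda^{\frac n2}+n\,\sN^N_{I_{n-1}}(\lambda)$. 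That is weaker than Step 1, because points with several vanishing coordinates are counted repeatedly. Also, cube points with all $x_i\ge1$ lie in $\mathbf 1+\{y\ge0:|y|<r\}$, not in $\{x\ge0:|x|<r\}$. This is harmless for volumes, but the inclusion as written is false.

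The fibering induction is only named, not carried out. Comparing the sum over $j$ with the integral leaves the $j=0$ endpoint contributions, including $2(n-1)\lambda^{\frac{n-2}{2}}$ from the inductive remainder. There are also the terms where the inductive maximum equals $1$. In the straightforward version this again forces a threshold in $\lambda$; for $n=3$ the crude comparison only works above roughly $\lambda\approx5$. A separate low-frequency check is then needed, and the proposal supplies neither.

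As it stands, the proposal does not prove the lemma.
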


\begin{proof}
	Decomposing a nonnegative lattice point according to its set of nonzero coordinates gives
	\begin{equation}\label{eq:decomposition}
		\mathcal{N}_{I_n}^N(\lambda)
		=
		\sum_{j=0}^n\binom{n}{j}A_j(\sqrt{\lambda}),
	\end{equation}
	where $A_0(r):=1$ and for $j\ge1$, 
	\[
	A_j(r)
	:=	\#\left\{m\in\mathbb Z_{>0}^j:\ |m|<r\right\}
	\le  \frac{\omega_j}{2^j}r^j.
	\]
	Here $\omega_j$ denotes the volume of the
	unit ball in $\mathbb R^j$.  
	
	First suppose  $\lambda \ge 4$. 
	Then $\frac{{\sqrt \lambda}^j}{2^j}\le	2^{1-n}\lambda^{\frac{n-1}2}$ for  $0\le j\le n-1$, and thus 
	\[
	\mathcal{N}_{I_n}^N(\lambda)
	\le 	\sum_{j=0}^{n}
	\binom{n}{j}\frac{\omega_j}{2^j}\sqrt{\lambda}^j
	\le 	\frac{\omega_n}{2^n}\lambda^{\frac n2}+
	2^{1-n}\lambda^{\frac{n-1}2}
	\sum_{j=0}^{n-1}\binom{n}{j}\omega_j.
	\]	 
	So the desired inequality follows from the fact 
	$\frac{\omega_n}{2^n} = L_{0,n}\pi^n$ 
	and 
	\[
	2^{1-n} \sum_{j=0}^{n-1}\binom{n}{j}\omega_j
	\le 2^{1-n} \left( 1+2\sum_{j=1}^{n-1}j\binom{n}{j}
	\right) 
	=\left( 2n+(1-2n)2^{1-n} \right) \le 2n,
	\]
	where we used the trivial identity $\omega_j \le 2j$ and the combinatorial identity 	
	\[ \sum_{j=1}^{n-1}j\binom{n}{j}=\sum_{j=1}^{n-1}n\binom{n-1}{j-1}=n(2^{n-1}-1).\] 
	
	It remains to consider $0<\lambda<4$. Since  
	\[
	\mathcal{N}_{I_n}^N(\lambda)
	=\begin{cases}
		1, & 0<\lambda \le 1,\\
		n+1, & 1 < \lambda \le 2, \\
		\frac{n^2+n+2}{2}, & 2 < \lambda \le 3, \\
		\frac{n^3+5n+6}{6}, & 3 < \lambda \le 4,
	\end{cases}
	\]
	the desired inequality holds for $0<\lambda \le 1$ trivially, and it is not hard to verify for $\lambda \in (k,k+1]$, where $k=1,2,3$, that 
	\[ 
	\mathcal{N}_{I_n}^N(\lambda) \le 	2n k^{\frac{n-1}2} \le 2n\lambda^{\frac{n-1}2}.
	\]
	This completes the proof. 
\end{proof}

The following estimate is related to the one-dimensional Riesz-mean bounds of Harrell and Stubbe \cite[Lemma~3.2]{HarrellStubbe2018}.
In contrast to their estimates, we require the endpoint exponent
$\gamma=\frac12$ and a positive remainder that is uniform for all
$\gamma\ge\frac12$.
\begin{lemma}\label{lwsquare}
	For every $\gamma\ge \frac12$ and $\lambda>0$,
	\[
	\sum_{\substack{i\in\Z_{\ge0}, i^2<\lambda}}
	(\lambda-i^2)^\gamma
	\ge
	 L_{\gamma,1}\pi\lambda^{\gamma+\frac12}
	+\left(1-\frac{\pi}{4}\right)\lambda^\gamma.
	\]
\end{lemma}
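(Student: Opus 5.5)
The plan is to reduce to $\gamma=\tfrac12$ by a Riesz-mean lifting, and then to prove the $\gamma=\tfrac12$ case by an elementary lattice-point count for the quarter disk. The lifting step is fully checked; the $\gamma=\tfrac12$ count is only partly settled.

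\textbf{Step 1: the main term is an integral.} Since $\int_0^1(1-t^2)^\gamma\,dt=\frac{\sqrt\pi\,\Gamma(\gamma+1)}{2\Gamma(\gamma+3/2)}$, we have $L_{\gamma,1}\pi\lambda^{\gamma+\frac12}=\int_0^{\sqrt\lambda}(\lambda-x^2)^\gamma\,dx$. So the claim says that $\sum_{i\ge0}f(i)$ exceeds $\int_0^\infty f$ by at least $(1-\frac\pi4)\lambda^\gamma$, where $f(x)=(\lambda-x^2)_+^\gamma$.

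\textbf{Step 2: reduction to $\gamma=\tfrac12$.} For $\gamma>\frac12$ I would use the subordination identity
\[
(\lambda-s)_+^\gamma=c_\gamma\int_0^\lambda(\lambda-\mu)^{\gamma-\frac32}(\mu-s)_+^{\frac12}\,d\mu,\qquad c_\gamma=\frac{\Gamma(\gamma+1)}{\Gamma(\tfrac32)\Gamma(\gamma-\tfrac12)} .
\]
Apply it with $s=i^2$, sum over $i$, and insert the $\gamma=\frac12$ inequality at level $\mu$.
\begin{itemize}
\item The term $L_{1/2,1}\pi\mu$ integrates to exactly $L_{\gamma,1}\pi\lambda^{\gamma+\frac12}$. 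This is the usual consistency of Riesz means, and also follows from Step 1.
\item The term $(1-\frac\pi4)\mu^{1/2}$ integrates to $(1-\frac\pi4)\lambda^\gamma\,c_\gamma B(\gamma-\frac12,\frac32)=(1-\frac\pi4)\lambda^\gamma$.
\end{itemize}
So the constant is preserved exactly, and this explains why the remainder is uniform in $\gamma\ge\frac12$.

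\textbf{Step 3: the case $\gamma=\tfrac12$.} Set $R=\sqrt\lambda$ and $f(x)=\sqrt{R^2-x^2}$ on $[0,R]$. We must show
\[
\sum_{0\le i<R}f(i)\ge\frac\pi4R^2+\Bigl(1-\frac\pi4\Bigr)R .
\]
For $R\le1$ only $i=0$ contributes. The inequality becomes $R\ge\frac\pi4R^2+(1-\frac\pi4)R$, which is equivalent to $R\le1$, with equality at $R=1$. This shows the constant $1-\frac\pi4$ is sharp and that the small-$R$ regime is the delicate one.

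For $R>1$, $f$ is concave and decreasing on $[0,R]$. Concavity gives the midpoint inequality $f(i)\ge\int_{i-1/2}^{i+1/2}f$ for every $i$ with $[i-\frac12,i+\frac12]\subset[0,R]$. Summing these over $1\le i\le N-1$, where $N=\lceil R\rceil-1$ is the largest admissible lattice point, would give
\[
\sum_{0\le i<R}f(i)-\frac\pi4R^2\ \ge\ f(0)-\int_0^{1/2}f\;-\;E_N\ \ge\ \frac R2-E_N ,
\]
where $E_N$ is the deficit from the last cell $[N-\frac12,R]$.

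\textbf{Main obstacle: the last cell.} The task is to show $E_N\le(\frac\pi4-\frac12)R$ (the gain beyond $(1-\frac\pi4)R$), at least when $R$ is near $1$ or $2$. On $[N-\frac12,R]$ the function $f$ has a square-root singularity at $R$. The cell has length $R-N+\frac12\in(\frac12,\frac32]$, so the midpoint trick alone does not bound its integral by $f(N)$.

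My first attempt would be to split into the cases $R-N\le\frac12$ and $R-N>\frac12$. In each case I would bound $\int_{N-1/2}^Rf$ by $f(N-\frac12)\,(R-N+\frac12)$, or by the exact circular-segment area, and compare with $f(N)$ plus the slack $(\frac\pi4-\frac12)R$ available from the first cell.

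For large $R$ this slack dominates, since the last-cell error is $O(\sqrt R)$. So only a bounded range such as $1<R\le R_0$ needs checking. There I would compute the finitely many sums $\sum_{i<R}\sqrt{R^2-i^2}$ directly on each interval $R\in(k,k+1]$, $k=1,\dots,K$. On each such interval the difference between the two sides is an explicit function of $R$ whose minimum can be located, and near $R=1^+$ the new term $i=1$ enters continuously and restores the inequality.
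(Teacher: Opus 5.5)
Your Step 2 is the paper's reduction: the beta-integral identity, written in the variable $t=\lambda-\mu$. Your constants check out, so everything rests on the case $\gamma=\frac12$, and there the proposal has a genuine gap.

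The sub-goal you set is false exactly where it matters. Equality holds at $R=1$, so the crude first-cell bound $\int_0^{1/2}f\le R/2$ throws away precisely the slack you need. Indeed at $R=1$ one has $E_1=\int_{1/2}^1\sqrt{1-x^2}\,dx\approx0.307>\frac\pi4-\frac12\approx0.285$, so $E_N\le(\frac\pi4-\frac12)R$ fails for $R$ close to $1^+$. For large $R$, your bound $E_N\le\frac32\sqrt{3R}$ only wins once $R\gtrsim83$. For the whole intermediate range you only assert that the difference on each $(k,k+1]$ is ``an explicit function whose minimum can be located.'' That is not yet an argument.

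The missing idea is a concavity reduction to integer radii. On $\lambda\in(k^2,(k+1)^2]$, let the $i=0$ term absorb the negative $\sqrt\lambda$:
\[
\sum_{i=0}^k\sqrt{\lambda-i^2}-\frac\pi4\lambda-\Bigl(1-\frac\pi4\Bigr)\sqrt\lambda
=\frac\pi4\sqrt\lambda+\sum_{i=1}^k\sqrt{\lambda-i^2}-\frac\pi4\lambda .
\]
The right-hand side is concave in $\lambda$, so only the endpoints need checking. The right endpoint $(k+1)^2$ is the integer case $k+1$, because the $i=k+1$ term vanishes there. Hence it suffices to prove the inequality at $R=k\in\mathbb N$.

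At integer $R$ your last-cell obstacle disappears. Set $g_k(x)=\sqrt{k^2-x^2}$ and use cells $[i,i+1]$ rather than midpoint cells, with the tangent-line bound from concavity at the left endpoint:
\[
g_k(i)-\int_i^{i+1}g_k\ge-\tfrac12 g_k'(i)=\frac{i}{2\sqrt{k^2-i^2}} .
\]
This is valid up to $i=k-1$, since the square-root singularity sits at the right endpoint, where $g_k(k)=0$. Comparing the increasing function $x/\sqrt{k^2-x^2}$ with its integral gives
\[
\sum_{i=0}^k\sqrt{k^2-i^2}-\frac\pi4k^2\ \ge\ \frac12\sum_{i=1}^{k-1}\frac{i}{\sqrt{k^2-i^2}}\ \ge\ \frac12\bigl(k-\sqrt{2k-1}\bigr).
\]
This exceeds $(1-\frac\pi4)k$ for $k\ge6$, and the cases $k\le5$ are explicit computations.

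Your midpoint scheme could be rescued by the same concavity observation, now in $R$ on each $(k,k+1]$. Even then it would leave about eighty integer cases instead of five, and as written you never state the concavity that justifies checking only endpoints.
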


\begin{proof}
	We first prove the assertion for $\gamma=\frac12$, in which case $L_{\gamma,1}=\frac 14$.  
	If $0<\lambda\le1$, the desired inequality holds trivially. 
	To prove that the same inequality holds for all $\lambda>1$, we consider, on each interval $(k^2,(k+1)^2]$, $k\ge1$, the function
	\[
	f_k(\lambda)
	:=\sum_{i=0}^{k}\sqrt{\lambda-i^2}
	-\frac{\pi}{4}\lambda-\left(1-\frac{\pi}{4}\right)\sqrt\lambda =\frac{\pi}4 \sqrt \lambda+\sum_{i=1}^{k}\sqrt{\lambda-i^2}
	-\frac{\pi}{4}\lambda.
	\]
	It is concave and thus its minimum is attained at an endpoint. So it
	suffices to verify $f_k(k^2) \ge 0$ and $f_k((k+1)^2) \ge 0$ for all $k$, or equivalently, to prove 
	\begin{equation}\label{eq:square-points}
		\sum_{i=0}^{k}
		\sqrt{k^2-i^2}
		-\frac{\pi}{4}k^2 \ge \left(1-\frac{\pi}{4}\right) k,
		\qquad k\in\mathbb N.
	\end{equation}
	
	Write
	$g_k(x)=\sqrt{k^2-x^2}$ on $[0,k]$.  Since
	$\int_0^k g_k(x)\,dx=\pi k^2/4$, 
	\begin{align*}
		\sum_{i=0}^{k}
		\sqrt{k^2-i^2}
		-\frac{\pi}{4}k^2
		&=\sum_{i=0}^{k-1}
		\left(g_k(i)-\int_i^{i+1}g_k(x)\,dx\right) \\
		&\ge \frac12\sum_{i=1}^{k-1}
		\frac{i}{\sqrt{k^2-i^2}} \\
		&\ge \frac12\int_0^{k-1}
		\frac{x}{\sqrt{k^2-x^2}}\,dx
		=\frac12\left(k-\sqrt{2k-1}\right).
	\end{align*}
    This gives \eqref{eq:square-points} for $k\ge 6$.
	For $k=1,\dots,5$, direct computation shows that the difference
	between the left-hand side and the right-hand side of
	\eqref{eq:square-points} equals 
	\[
	0,\quad
	\sqrt3-\frac\pi2,\quad
	2\sqrt2+\sqrt5-\frac{3\pi}{2},\quad
	\sqrt{15}+2\sqrt3+\sqrt7-3\pi,\quad
	2\sqrt6+\sqrt{21}+7-5\pi
	\]
	respectively, all of which are nonnegative.  This proves the lemma for $\gamma=1/2$. 
	
	Now let $\gamma>\frac12$.  The beta-integral identity
	\[
	(\lambda-i^2)^\gamma
	=
	\mathscr{B}\left(\frac32,\gamma-\frac12\right)^{-1}
	\int_0^\infty
	(\lambda-i^2-t)_+^{\frac 12}
	\,t^{\gamma-\frac32}\,dt,
	\]
	and the already proved endpoint case yield
	\begin{align*}
		 \sum_{i\in \Z_{\ge 0},i^2<\lambda}\!(\lambda-i^2)^\gamma  
		 =&\mathscr{B}\left(\frac32,\gamma-\frac12\right)^{-1}\int_0^\infty \sum_{i\in \Z_{\ge 0},i^2<\lambda}(\lambda-i^2-t)_+^{\frac 12}t^{\gamma-\frac32}\mathrm dt\\
		\ge&\mathscr{B}\left(\frac32,\gamma-\frac12\right)^{-1}\!\int_0^\infty\! \left(\frac\pi4 (\lambda\!-\!t)_++\left(1\!-\!\frac{\pi}{4}\right)(\lambda\!-\!t)_+^{\frac 12}\right)t^{\gamma-\frac32}\mathrm dt\\
		=&L_{\gamma,1}\pi\lambda^{\gamma+\frac12}+\left(1-\frac{\pi}{4}\right)\lambda^\gamma.
	\end{align*}
%
\end{proof}

\section{Proof of Theorem \ref{general}}\label{pfgeneral}

Since translations preserve the Neumann eigenvalues of the Laplacian, we may assume without loss of generality that $x=0$. Accordingly, we write $aU$ in place of $x+aU$ throughout the proof.

First, by Lemma \ref{Seeasy}, there exists a constant $\alpha(U)>0$ such that for all $\lambda>0$,
\[
\sN^N_U(\lambda)\le L_{0,n}|U|\lambda^{\frac n2}+\alpha(U)\lambda^{\frac {n-1}2}+1.
\]
Hence, by Lemma \ref{cover} and the \ref{ENP} inequality, for all $\lambda>0$,
\begin{align*}
	\sN^N_{\Omega'}(\lambda)
	\ge&\sN^N_\Omega(\lambda)-\sN^N_{aU}(\lambda)\\
	=  &\sN^N_\Omega(\lambda)-\sN^N_{U}(a^2\lambda)\\
	\ge&L_{0,n}|\Omega|\lambda^{\frac n2}+C_1(\Omega)\lambda^{\frac{n-1}2}-L_{0,n}|U|(a^2\lambda)^{\frac n2}-\alpha(U)(a^2\lambda)^{\frac {n-1}2}-1\\
	=  &L_{0,n}|\Omega'|\lambda^{\frac n2}+\left(C_1(\Omega)-\alpha(U)a^{n-1}\right)\lambda^{\frac{n-1}2}-1.
\end{align*}
Therefore, if
\[
a<\left(\frac{C_1(\Omega)}{2\alpha(U)}\right)^{\frac 1{n-1}},
\]
then, for
\[
\lambda>\left(\frac 4{C_1(\Omega)}\right)^{\frac 2{n-1}}=:\Lambda_1(\Omega),
\]
we have
\begin{equation}\label{biglambda}
\sN^N_{\Omega'}(\lambda)\ge L_{0,n}|\Omega'|\lambda^{\frac n2}\!+\!\frac {C_1(\Omega)}{2}\lambda^{\frac{n-1}2}-1\ge L_{0,n}|\Omega'|\lambda^{\frac n2}\!+\!\frac {C_1(\Omega)}{4}\lambda^{\frac{n-1}2}.
\end{equation}

It remains to consider the case $\lambda\le\Lambda_1(\Omega)$. Let
\[
N_1=\sN^N_\Omega (\Lambda_1(\Omega)),
\]
and let
$
\{\varphi_0,\varphi_1,\cdots,\varphi_{N_1-1}\}
$
be an orthonormal family of Neumann eigenfunctions on $\Omega$ corresponding to eigenvalues less than $\Lambda_1(\Omega)$. Write 
\[
\Phi:=\max_{0\le i\le N_1-1}\|\varphi_i\|_{L^\infty(\Omega)}.
\]
We prove 
\begin{lemma}\label{genhssta}
	For any $0<\epsilon<1$, if $
	a\le\left(\frac\epsilon{|U|\Phi^2N_1 }\right)^{\frac 1n},$ 
	then  
	\[
	\mu_i(\Omega')\le\frac{\mu_i(\Omega)}{1-\epsilon}, \qquad 0\le i\le N_1-1.
	\]
\end{lemma}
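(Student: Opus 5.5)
Fix $0\le i\le N_1-1$ and apply the min-max characterization \eqref{minmax} on $\Omega'$, using as test space the restrictions of the first $i+1$ eigenfunctions of $\Omega$ to $\Omega'$. Set
\[
E_i:=\operatorname{span}\{\varphi_0|_{\Omega'},\dots,\varphi_i|_{\Omega'}\}\subset H^1(\Omega').
\]
Every $\varphi_j\in H^1(\Omega)$, so $\varphi_j|_{\Omega'}\in H^1(\Omega')$; no extension across the hole is needed. This one-sided direction is why the hole may sit anywhere in $\Omega$.

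Take $f=\sum_{j=0}^i c_j\varphi_j$ with $\sum c_j^2=1$. On $\Omega$ orthonormality gives $\int_\Omega f^2=1$ and $\int_\Omega|\nabla f|^2=\sum c_j^2\mu_j(\Omega)\le\mu_i(\Omega)$. Since $\Omega'\subset\Omega$, the gradient integral only decreases:
\[
\int_{\Omega'}|\nabla f|^2\le\mu_i(\Omega).
\]
For the denominator, Cauchy--Schwarz and the $L^\infty$ bound $\Phi$ give
\[
|f(y)|^2\le\Big(\sum_j c_j^2\Big)\Big(\sum_{j=0}^i\varphi_j(y)^2\Big)\le (i+1)\Phi^2\le N_1\Phi^2 .
\]
Hence
\[
\int_{aU}f^2\le |aU|N_1\Phi^2=a^n|U|N_1\Phi^2\le\epsilon,
\]
so that
\[
\int_{\Omega'}f^2\ge 1-\epsilon>0.
\]
In particular the restriction map is injective on $\operatorname{span}\{\varphi_0,\dots,\varphi_i\}$: a nonzero combination cannot vanish a.e.\ on $\Omega'$. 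Thus $\dim E_i=i+1$. If $\Omega'$ is disconnected this is still fine, because min-max remains valid.

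The Rayleigh quotient of every nonzero element of $E_i$ is therefore at most $\mu_i(\Omega)/(1-\epsilon)$, and \eqref{minmax} yields $\mu_i(\Omega')\le\mu_i(\Omega)/(1-\epsilon)$.

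Two points need checking. First, $\Phi<\infty$: Neumann eigenfunctions on a bounded Lipschitz domain are bounded, by standard elliptic/heat-kernel estimates, and only finitely many of them enter here. Second, $\Omega'$ must have discrete spectrum so that \eqref{minmax} applies; this holds because $\overline{aU}$ is a smooth compact set inside $\Omega$. The only real content is the pointwise bound $\sum_{j\le i}\varphi_j^2\le N_1\Phi^2$, which I expect to be the main step; the rest is routine.
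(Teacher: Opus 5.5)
Your proof is correct and follows the paper's argument: restrict $\operatorname{span}\{\varphi_0,\dots,\varphi_i\}$ to $\Omega'$, bound the Dirichlet energy by $\mu_i(\Omega)$, and bound the mass lost on $aU$ by $a^n|U|(i+1)\Phi^2\sum_j c_j^2$. Your pointwise Cauchy--Schwarz step gives the same constant as the paper's two-step bound $\|f\|_{L^\infty}\le\Phi\sum_j|\alpha_j|$ followed by $(\sum_j|\alpha_j|)^2\le(i+1)\sum_j\alpha_j^2$, and your explicit check that the restricted test space still has dimension $i+1$ fills in a point the paper leaves implicit.
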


\begin{proof}
	Let $E_i=\operatorname{span}\{\varphi_0,\varphi_1,\ldots,\varphi_i\}.$ 
	By \eqref{minmax},
	\begin{equation}\label{genmax1}
		\mu_i(\Omega')
		\le
		\max_{\substack{f\in E_i\\f\neq0}}
		\frac{\displaystyle\int_{\Omega'}|\nabla f|^2\,dx}
		{\displaystyle\int_{\Omega'}f^2\,dx}.
	\end{equation}
	Write $
	f=\sum_{j=0}^i\alpha_j\varphi_j.$ 
	Then $
	\|f\|_{L^\infty(\Omega)}
	\le
	\Phi\sum_{j=0}^i|\alpha_j|$ and hence
	\begin{align*}
		\frac{\displaystyle\int_{\Omega'}|\nabla f|^2\,dx}
		{\displaystyle\int_{\Omega'}f^2\,dx}
		&\le
		\frac{\displaystyle\int_{\Omega}|\nabla f|^2\,dx}
		{\displaystyle\int_{\Omega}f^2\,dx
			-a^n|U|\Phi^2\cdot\left(\sum_{j=0}^i|\alpha_j|\right)^2}\\
		&=
		\frac{\displaystyle\int_{\Omega}|\nabla f|^2\,dx}
		{\displaystyle\int_{\Omega}f^2\,dx}
		\cdot
		\frac{\displaystyle\int_{\Omega}f^2\,dx}
		{\displaystyle\int_{\Omega}f^2\,dx
			-a^n|U|\Phi^2\cdot\left(\sum_{j=0}^i|\alpha_j|\right)^2}\\
		&\le
		\mu_i(\Omega)
		\cdot
		\frac{\sum_{j=0}^i|\alpha_j|^2}
		{\sum_{j=0}^i|\alpha_j|^2
			-a^n|U|\Phi^2(i+1)\sum_{j=0}^i|\alpha_j|^2}\\
		&\le
		\frac{\mu_i(\Omega)}{1-\epsilon}.
	\end{align*}
	The conclusion follows from \eqref{genmax1}.
\end{proof}

Now let $0<\epsilon<1$ and $0<\lambda\le\Lambda_1(\Omega)$. If
$
\mu_i(\Omega)<(1-\epsilon)\lambda,
$
then necessarily $i\le N_1-1$. Hence, by Lemma \ref{genhssta}, if
\begin{equation}\label{upbdofa}
	a<
	\left(
	\frac{\epsilon}{|U|\Phi^2N_1}
	\right)^{\frac1n},
\end{equation}
then
\[
\mu_i(\Omega')
\le
\frac{\mu_i(\Omega)}{1-\epsilon}
<
\lambda.
\]
Consequently, for all $0<\lambda\le\Lambda_1(\Omega)$,
\[
\sN_{\Omega'}^N(\lambda)
\ge
\sN_\Omega^N((1-\epsilon)\lambda).
\]
Applying the \ref{ENP} inequality gives
\[
\sN_{\Omega'}^N(\lambda)
\ge
L_{0,n}|\Omega|\lambda^{\frac n2}(1-\epsilon)^{\frac n2}
+C_2(\Omega)
\ge
L_{0,n}|\Omega|\lambda^{\frac n2}
\left(1-\frac n2\epsilon\right)
+C_2(\Omega),
\]
where we used $0<\epsilon<1$ and $n\ge 2$. 
Now choose
\[
\epsilon=\min\left(
\frac{C_2(\Omega)}
{L_{0,n}|\Omega|\Lambda_1(\Omega)^{\frac n2}n},\frac 12\right).
\]
Then
\[
C_2(\Omega)
-
L_{0,n}|\Omega|
\Lambda_1(\Omega)^{\frac n2}
\frac n2\epsilon
\ge\frac{C_2(\Omega)}2.
\]
Substituting this choice of $\epsilon$ into \eqref{upbdofa}, we see that if
\[
a<\min\left(
\left(
\frac{C_2(\Omega)}
{L_{0,n}|\Omega|
	\Lambda_1(\Omega)^{\frac n2}
	n|U|\Phi^2N_1}
\right)^{\frac1n},\left(
\frac{1}{2|U|\Phi^2N_1}
\right)^{\frac1n}\right),
\]
then, for all $0<\lambda\le\Lambda_1(\Omega)$,
\[
\begin{aligned}
	\sN_{\Omega'}^N(\lambda)
	&\ge
	L_{0,n}|\Omega|\lambda^{\frac n2}
	+
	C_2(\Omega)
	-
	L_{0,n}|\Omega|
	\Lambda_1(\Omega)^{\frac n2}
	\frac n2\epsilon\\
	&\ge
	L_{0,n}|\Omega|\lambda^{\frac n2}+\frac{C_2(\Omega)}2
	>
	L_{0,n}|\Omega'|\lambda^{\frac n2}+\frac{C_2(\Omega)}2.
\end{aligned}
\]

Combining the estimates for
$0<\lambda\le\Lambda_1(\Omega)$
and
$\lambda>\Lambda_1(\Omega)$,
suppose that
\begin{equation}\label{eq:theexpressionofa}
a\!<\!
\min\!\left(
\!\left(
\frac{C_1(\Omega)}
{2\alpha(U)}
\right)^{\frac1{n\!-\!1}}\!\!,
\,
\left(\!
\frac{C_2(\Omega)}
{L_{0,n}|\Omega|
	\Lambda_1\!(\Omega)^{\frac n2}
	n|U|\Phi^2N_1}\!
\right)^{\frac1n}\!\!,\left(\!
\frac{1}{2|U|\Phi^2N_1}
\!\right)^{\frac1n}\!
\right).
\end{equation}
Define
\[
C_2(\Omega')
:=
\min\left(\frac{C_2(\Omega)}2,1\right),
\qquad
C_1(\Omega')
:=
\frac{C_1(\Omega)}4C_2(\Omega').
\]
Since
\[
C_1(\Omega')
\Lambda_1(\Omega)^{\frac{n-1}{2}}
=C_2(\Omega'),
\]
for $0<\lambda\le\Lambda_1(\Omega)$ we have
\[
\max\left(
C_1(\Omega')\lambda^{\frac{n-1}{2}},
C_2(\Omega')
\right)
=C_2(\Omega')
\le\frac{C_2(\Omega)}2.
\]
On the other hand, for $\lambda>\Lambda_1(\Omega)$,
the inequalities $C_2(\Omega')\le1$ and
$C_1(\Omega')\le C_1(\Omega)/4$ yield
\[
\max\left(
C_1(\Omega')\lambda^{\frac{n-1}{2}},
C_2(\Omega')
\right)
\le
\frac{C_1(\Omega)}4\lambda^{\frac{n-1}{2}}.
\]
Consequently, for all $\lambda>0$,
\[
\sN_{\Omega'}^N(\lambda)
\ge
L_{0,n}|\Omega'|\lambda^{\frac n2}
+
\max\left(
C_1(\Omega')\lambda^{\frac{n-1}{2}},
C_2(\Omega')
\right).
\]

\section{Proof of Theorem \ref{mthm} and Calculations for Cuboids}\label{pfthm1.1}

\subsection{Proof of Theorem \ref{mthm}}

The proof follows \S\ref{pfgeneral}, with only the necessary modifications.

By the \ref{ENP} inequality, Lemmas~\ref{cover} and~\ref{upbdIn}, for all $\lambda>0$,
\begin{align*}
	\sN^N_{\Omega'}(\lambda)
	&\ge
	\sN^N_\Omega(\lambda)
	-
	\sum_{k=1}^M
	\sN^N_{Q_k}(\lambda)\\
	&=\sN^N_\Omega(\lambda)
	-
	\sum_{k=1}^M\sN^N_{I_n}\left(\frac{\ell(Q_k)^2\lambda}{\pi^2}\right)\\
	&\ge
	L_{0,n}|\Omega|
	\lambda^{\frac n2}
	\!+\!
	C_1(\Omega)
	\lambda^{\frac{n-1}2}
	\!-\!
	\sum_{k=1}^M
	\left(
	L_{0,n}\ell(Q_k)^n\lambda^{\frac n2}
	\!+\!
	\frac{2n}{\pi^{n-1}}
	\ell(Q_k)^{n-1}
	\lambda^{\frac{n-1}2}
	\!+\!
	1
	\right)\\
	&=
	L_{0,n}
	|\Omega'|
	\lambda^{\frac n2}
	+
	\left(
	C_1(\Omega)
	-
	\frac{2n}{\pi^{n-1}}
	\mathcal A_{\mathcal Q}
	\right)
	\lambda^{\frac{n-1}2}
	-
	M.
\end{align*}

Hence, if
\begin{equation}\label{1}
\mathcal A_{\mathcal Q}
\le
\frac{C_1(\Omega)\pi^{n-1}}
{4n},
\end{equation}
then for all
\begin{equation}\label{2}
	\lambda>
	\left(
	\frac{2M}
	{C_1(\Omega)}
	\right)^{\frac2{n-1}}:=\Lambda_2(\Omega),
\end{equation}
we have
\[
\sN^N_{\Omega'}(\lambda)
\ge L_{0,n}
|\Omega'|
\lambda^{\frac n2} +\frac{C_1(\Omega)}2\lambda^{\frac{n-1}2}-M
\ge
L_{0,n}
|\Omega'|
\lambda^{\frac n2}.
\]

It remains to consider the case
$\lambda\le\Lambda_2(\Omega)$.
Let
\[
N_2
=
\sN^N_\Omega(\Lambda_2(\Omega)),
\]
and define
\begin{equation}\label{Linftybd}
\Phi
=
\max_{0\le i\le N_2-1}
\|\varphi_i\|_{L^\infty(\Omega)},
\end{equation}
where
$\{\varphi_0,\ldots,\varphi_{N_2-1}\}$
denotes the Neumann eigenfunctions associated with the Neumann eigenvalues below $\Lambda_2(\Omega)$. 
Exactly as in Lemma \ref{genhssta}, one obtains

\begin{lemma}\label{hssta}
	For every $0<\epsilon<1$, if $
	\mathcal V_{\mathcal Q} \le \frac{\epsilon}
	{\Phi^2N_2},$ 
	then  
	\[
	\mu_i(\Omega') \le \frac{\mu_i(\Omega)}
	{1-\epsilon}, \qquad   0\le i\le N_2-1.
	\]
\end{lemma}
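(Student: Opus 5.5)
The plan is to repeat the argument of Lemma \ref{genhssta} with the single hole $aU$ replaced by the union of cubes $\bigcup_{k=1}^M \overline{Q_k}$, whose total volume is $\mathcal V_{\mathcal Q}$. Fix $0\le i\le N_2-1$ and set $E_i=\operatorname{span}\{\varphi_0,\ldots,\varphi_i\}$. Since $i\le N_2-1$, every $\varphi_j$ in this span is one of the eigenfunctions entering the definition \eqref{Linftybd} of $\Phi$. The restrictions of elements of $E_i$ to $\Omega'$ lie in $H^1(\Omega')$.

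I will check that $\dim E_i|_{\Omega'}=i+1$. A nonzero $f\in E_i$ cannot vanish identically on the open set $\Omega'$; this also follows from the $L^2$ bound below, which is strictly positive when $\epsilon<1$. Given that, \eqref{minmax} gives
\[
\mu_i(\Omega')\le \max_{0\ne f\in E_i}\frac{\int_{\Omega'}|\nabla f|^2\,dx}{\int_{\Omega'}f^2\,dx}.
\]

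Write $f=\sum_{j=0}^i\alpha_j\varphi_j$ and estimate numerator and denominator separately. For the numerator, drop the cubes:
\[
\int_{\Omega'}|\nabla f|^2\,dx\le\int_\Omega|\nabla f|^2\,dx=\sum_j\mu_j(\Omega)\alpha_j^2\le\mu_i(\Omega)\sum_j\alpha_j^2 .
\]
For the denominator, use the pointwise bound $|f|\le\Phi\sum_j|\alpha_j|$. Cauchy--Schwarz gives $\left(\sum_j|\alpha_j|\right)^2\le(i+1)\sum_j\alpha_j^2\le N_2\sum_j\alpha_j^2$. Hence
\[
\int_{\bigcup Q_k} f^2\,dx\le \mathcal V_{\mathcal Q}\,\Phi^2 N_2\sum_j\alpha_j^2\le\epsilon\sum_j\alpha_j^2 ,
\]
and therefore $\int_{\Omega'}f^2\,dx\ge(1-\epsilon)\sum_j\alpha_j^2>0$. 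Dividing the two estimates yields $\mu_i(\Omega')\le\mu_i(\Omega)/(1-\epsilon)$.

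I expect no step to be a real obstacle. The only points needing care are the following:
\begin{itemize}
\item the closures of the cubes differ from the open cubes by a null set, so the volume removed is exactly $\mathcal V_{\mathcal Q}$;
\item $\Omega'$ may be disconnected or non-Lipschitz, but \eqref{minmax} holds as an upper bound for any trial subspace of $H^1(\Omega')$, which is all that is used;
\item the eigenfunctions $\varphi_j$ are bounded, so $\Phi<\infty$, which is implicit in \eqref{Linftybd}.
\end{itemize}
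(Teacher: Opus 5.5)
Your proposal is correct and takes the same approach as the paper, which proves this lemma "exactly as in Lemma \ref{genhssta}". You use $E_i=\operatorname{span}\{\varphi_0,\dots,\varphi_i\}$ as the min--max trial space, bound the energy by $\mu_i(\Omega)\sum_j\alpha_j^2$, and control the mass in the cubes via $|f|\le\Phi\sum_j|\alpha_j|$ and Cauchy--Schwarz with $i+1\le N_2$; your use of the $L^2$ lower bound to show the trial space keeps dimension $i+1$ on $\Omega'$ makes explicit a point the paper leaves implicit.
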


Applying the same argument as in \S\ref{pfgeneral}, with the bound on
$|aU|$ replaced by the corresponding bound on $\mathcal V_{\mathcal Q}$, and choosing
\[
\epsilon
=
\min\left(\frac{
	2C_2(\Omega)}
{
	L_{0,n}
	|\Omega|
	\Lambda_2(\Omega)^{\frac n2}
	n},\frac12\right),
\]
we conclude that if
\[
\mathcal V_{\mathcal Q}
<
\min\left(\frac{
	2C_2(\Omega)}
{
	L_{0,n}
	|\Omega|
	\Lambda_2(\Omega)^{\frac n2}
	n\Phi^2N_2},\frac 1{2\Phi^2N_2}\right),
\]
then for all $0<\lambda\le \Lambda_2(\Omega)$,
\[
\sN^N_{\Omega'}(\lambda)
\ge
L_{0,n}
|\Omega'|
\lambda^{\frac n2}.
\]

Combining the estimates for
$\lambda>\Lambda_2(\Omega)$
and
$0<\lambda\le\Lambda_2(\Omega)$
completes the proof.

\subsection{Explicit Calculations for Cuboids}\label{ECfC}
In this subsection, we derive explicit expressions for the constants appearing in Theorem \ref{mthm} in the case of cuboids in $\R^n$, where $n\ge2$.

Let $\sR$ be a cuboid in $\R^n$. Without loss of generality, after scaling and rotation, we may assume that
\[
\sR=(0,a_1)\times\cdots \times (0,a_{n-1})\times (0,\pi),
\]
for some $0<a_1\le\cdots \le a_{n-1}\le \pi$, and let
\[
\widetilde\sR=(0,a_1)\times\cdots \times(0,a_{n-1}).
\]

We first compute the constants appearing in the  \ref{ENP} inequality.

\begin{lemma}\label{lwbdR2}
	For all $\lambda>0$, we have
	\[
	\sN^N_\sR(\lambda)
	\ge
	L_{0,n}|\sR|\lambda^{\frac n2}
	+
	\max\left(
	\left(1-\frac{\pi}{4}\right)
	L_{0,n-1}|\widetilde\sR|
	\lambda^{\frac{n-1}{2}},
	\left(1-\frac{\pi}{4}\right)
	L_{0,n-1}|\widetilde\sR|
	\right).
	\]
\end{lemma}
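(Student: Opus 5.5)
The plan is to separate variables in the last coordinate and reduce everything to the one-dimensional Riesz-mean bound of Lemma~\ref{lwsquare}. Since $\sR=\widetilde\sR\times(0,\pi)$, its Neumann eigenvalues are exactly $\nu_j+i^2$. Here $\nu_j$ runs over the Neumann eigenvalues of $\widetilde\sR$ and $i\in\Z_{\ge0}$. Grouping by $i$ gives the exact identity
\[
\sN^N_\sR(\lambda)=\sum_{i\in\Z_{\ge0},\,i^2<\lambda}\sN^N_{\widetilde\sR}(\lambda-i^2).
\]

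\textbf{Step 1: P\'olya's bound on each slice.} I will use the Neumann part of P\'olya's conjecture for the $(n-1)$-dimensional cuboid $\widetilde\sR$. This is known because cuboids tile space (\cite{Pol61,Kel66}). When $n-1=1$ it is elementary: the interval $(0,a_1)$ has $\lceil a_1\sqrt t/\pi\rceil\ge a_1\sqrt t/\pi$ Neumann eigenvalues below $t$. This gives
\[
\sN^N_{\widetilde\sR}(t)\ge L_{0,n-1}|\widetilde\sR|\,t^{\frac{n-1}2}, \qquad t>0.
\]
Summing over $i$ then yields
\[
\sN^N_\sR(\lambda)\ge L_{0,n-1}|\widetilde\sR|\sum_{i^2<\lambda}(\lambda-i^2)^{\gamma}, \qquad \gamma=\tfrac{n-1}2\ge\tfrac12 .
\]

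\textbf{Step 2: the one-dimensional Riesz mean.} Apply Lemma~\ref{lwsquare} with this $\gamma$. This bounds the right-hand side below by
\[
L_{0,n-1}|\widetilde\sR|\Bigl(L_{\frac{n-1}2,1}\,\pi\,\lambda^{\frac n2}+\bigl(1-\tfrac\pi4\bigr)\lambda^{\frac{n-1}2}\Bigr).
\]
The product formula \eqref{Cd1d2} gives $L_{0,n-1}L_{\frac{n-1}2,1}=L_{0,n}$, and $\pi|\widetilde\sR|=|\sR|$. So the main term becomes $L_{0,n}|\sR|\lambda^{n/2}$, and we obtain the first entry of the max for every $\lambda>0$. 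For $\lambda\ge1$ this entry dominates the constant entry, so that range is done.

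\textbf{Step 3: small $\lambda$.} For $0<\lambda<1$ we only have $\sN^N_\sR(\lambda)\ge1$, coming from the constant eigenfunction. It therefore suffices to show
\[
L_{0,n}|\sR|+\bigl(1-\tfrac\pi4\bigr)L_{0,n-1}|\widetilde\sR|\le1 .
\]
The constraint $a_k\le\pi$ gives $|\widetilde\sR|\le\pi^{n-1}$ and $|\sR|\le\pi^n$. Using $L_{0,m}\pi^m=\omega_m/2^m$, the left side is at most $\frac{\omega_n}{2^n}+\bigl(1-\frac\pi4\bigr)\frac{\omega_{n-1}}{2^{n-1}}$. For $n=2$ this equals $\frac\pi4+1-\frac\pi4=1$. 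For $n\ge3$ it is smaller, since $\omega_m/2^m$ decreases in $m$.

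\textbf{Main obstacle.} I expect this small-$\lambda$ check to be the only delicate point, because the inequality is sharp when $n=2$ and $a_1=\pi$. Everything else follows directly from Lemma~\ref{lwsquare} and the product formula.
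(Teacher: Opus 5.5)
Your proposal is correct and follows the paper's proof essentially step for step. You use separation of variables together with P\'olya's inequality for $\widetilde\sR$, then Lemma~\ref{lwsquare} with $\gamma=\frac{n-1}{2}$ and the product formula \eqref{Cd1d2}, and finally the small-$\lambda$ check $L_{0,n}|\sR|+(1-\frac{\pi}{4})L_{0,n-1}|\widetilde\sR|\le 1$ via $|\widetilde\sR|\le\pi^{n-1}$. The only cosmetic difference is that the paper obtains the needed bounds $\omega_n/2^n\le\pi/4$ and $\omega_{n-1}/2^{n-1}\le 1$ from the recursion $A_{n+2}=\frac{\pi}{2n+4}A_n$ for $A_n=L_{0,n}\pi^n=\omega_n/2^n$; this recursion is also a quick way to make your monotonicity claim rigorous.
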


\begin{proof}
	Since $\widetilde\sR$ satisfies P\'olya's conjecture, for all $\lambda>0$,
	\[
	\begin{aligned}
		\sN^N_{\sR}(\lambda)
		=
		\sum_{i\in\Z_{\ge0},\,i^2<\lambda}
		\sN^N_{\widetilde\sR}(\lambda-i^2)
		\ge
		L_{0,n-1}|\widetilde\sR|
		\sum_{i\in\Z_{\ge0},\,i^2<\lambda}
		(\lambda-i^2)^{\frac{n-1}2}.
	\end{aligned}
	\]
	Applying Lemma~\ref{lwsquare} and \eqref{Cd1d2} gives
	\begin{equation}\label{new}
		\begin{aligned}
			\sN^N_{\sR}(\lambda)
			&\ge
			L_{0,n-1}|\widetilde\sR|
			\left(
			L_{\frac{n-1}{2},1}\pi
			\lambda^{\frac n2}
			+
			\left(1-\frac{\pi}{4}\right)
			\lambda^{\frac{n-1}{2}}
			\right)\\
			&=
			L_{0,n}|\sR|\lambda^{\frac n2}
			+
			\left(1-\frac{\pi}{4}\right)
			L_{0,n-1}|\widetilde\sR|
			\lambda^{\frac{n-1}{2}}.
		\end{aligned}
	\end{equation}
	
	It remains to consider $0<\lambda\leq1$. Since
	$\sN^N_{\sR}(\lambda)=1$, we have for all $0<\lambda\leq1$,
	\[ \sN^N_{\sR}(\lambda) - L_{0,n}|\sR|\lambda^{\frac n2} \ge 1-L_{0,n}|\sR|.
	\]
	For simplicity, write 
	\[
	A_n=L_{0,n}\pi^n.
	\]
	Since $A_{n+2}=A_n\frac{\pi}{2n+4} <A_n,$ 
	we have
	\begin{equation}\label{ineqAn}
		\sup_{n\geq1}A_n
		\leq
		\max(A_1,A_2)=1,
		\qquad
		\sup_{n\geq2}A_n
		\leq
		\max(A_2,A_3)=\frac{\pi}{4}.
	\end{equation}
	Moreover,
	\[
	L_{0,n}|\sR|\leq A_n,
	\qquad
	L_{0,n-1}|\widetilde\sR|\leq A_{n-1}.
	\]
	Hence, for every $n\geq2$,
	\[
		\left(1-\frac{\pi}{4}\right)
		L_{0,n-1}|\widetilde\sR|
		\leq
		\left(1-\frac{\pi}{4}\right)A_{n-1}
		\leq
		1-\frac{\pi}{4}
		\leq
		1-A_n
		\le
		1-L_{0,n}|\sR|.
	\]
	Consequently, for all $0<\lambda\leq1$,
	\[
	\sN^N_{\sR}(\lambda)
	\ge
	L_{0,n}|\sR|\lambda^{\frac n2}
	+
	\left(1-\frac{\pi}{4}\right)
	L_{0,n-1}|\widetilde\sR|.
	\]
	
	For $\lambda\geq1$, the desired result follows from
	\eqref{new}, since $\lambda^{\frac{n-1}{2}}\geq1$. Combining
	this with the preceding estimate completes the proof.
\end{proof}

Let $\{Q_i\}_{i=1}^M$ be a collection of pairwise disjoint open
cubes in $\sR$, and set
\[
\sR'=\sR\setminus\overline{\cup_{i=1}^M Q_i}.
\]
Substituting the constant from Lemma~\ref{lwbdR2} into
\eqref{1} and \eqref{2}, we obtain that if
\begin{equation}\label{dim2ass1}
	\sA_\sQ
	\leq
	\frac{(4-\pi)\pi^{n-1}L_{0,n-1}|\widetilde\sR|}{16n},
	\qquad
	\lambda
	\geq
	\left(
	\frac{8M}
	{(4-\pi)L_{0,n-1}|\widetilde\sR|}
	\right)^{\frac{2}{n-1}}
	=:\Lambda_2,
\end{equation}
then
\[
\sN^N_{\sR'}(\lambda)
\geq
L_{0,n}|\sR'|\lambda^{\frac n2}.
\]

It remains to consider $\lambda<\Lambda_2$. By the explicit
expression for the Neumann eigenvalues of cuboids and
Lemma~\ref{upbdIn},
\[
\begin{aligned}
	\sN^N_\sR(\Lambda_2)
	&\leq
	\sN^N_{I_n}(\Lambda_2)
	\leq
	L_{0,n}\pi^n\Lambda_2^{\frac n2}
	+
	2n\Lambda_2^{\frac{n-1}{2}}
	=:N_2.
\end{aligned}
\]

For any
\[
\ell=(l_1,\ldots,l_n)\in\Z_{\geq0}^n,
\]
let $m(\ell)$ denote the number of nonzero components of $\ell$.
Define $\varphi_\ell:\sR\to\R$ by
\[
\varphi_\ell(x)
=
\frac{2^{\frac{m(\ell)}2}}{\sqrt{|\sR|}}
\left(
\prod_{i=1}^{n-1}
\cos\left(\frac{l_i\pi}{a_i}x_i\right)
\right)
\cos(l_nx_n),
\qquad
x=(x_1,\ldots,x_n)\in\sR.
\]
Then
$
\{\varphi_\ell\}_{\ell\in\Z_{\geq0}^n}
$
is an $L^2(\sR)$-orthonormal basis consisting of Neumann
eigenfunctions on $\sR$. Moreover,
\[
\|\varphi_\ell\|_{L^\infty(\sR)}
\leq
\frac{2^{\frac n2}}{\sqrt{|\sR|}}.
\]

Replacing the constant $\Phi$ in Lemma~\ref{hssta} by $2^{\frac n2}/\sqrt{|\sR|}$,
we immediately obtain the following lemma.

\begin{lemma}\label{eigenR'R}
	Let $0<\epsilon<1$. If $\sV_\sQ \le \frac{|\sR|\epsilon}{2^nN_2},$ 
	then
	\[
	\mu_i(\sR') \leq \frac{\mu_i(\sR)}{1-\epsilon}, \qquad 1\leq i<N_2.
	\]
\end{lemma}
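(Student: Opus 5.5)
This is Lemma~\ref{hssta} specialized to the cuboid, with the explicit sup bound $\|\varphi_\ell\|_{L^\infty(\sR)}\le 2^{\frac n2}/\sqrt{|\sR|}$ in place of $\Phi$. I will therefore repeat the min--max argument of Lemma~\ref{genhssta}.

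Order the orthonormal eigenbasis $\{\varphi_\ell\}$ by eigenvalue as $\psi_0,\psi_1,\dots$, so that $\mu_i(\sR)$ is the eigenvalue of $\psi_i$. Fix $1\le i<N_2$ and set $E_i=\operatorname{span}\{\psi_0,\dots,\psi_i\}$. Restrictions to $\sR'$ of functions in $E_i$ lie in $H^1(\sR')$. The subspace $E_i$ keeps dimension $i+1$ after restriction, because a nonzero trigonometric polynomial cannot vanish on the open set $\sR'$. By \eqref{minmax},
\[
\mu_i(\sR')\le\max_{0\neq f\in E_i}\frac{\int_{\sR'}|\nabla f|^2}{\int_{\sR'}f^2}.
\]

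For the numerator I bound $\int_{\sR'}|\nabla f|^2\le\int_{\sR}|\nabla f|^2\le\mu_i(\sR)\sum_j\alpha_j^2$, where $f=\sum_{j=0}^i\alpha_j\psi_j$. For the denominator I use
\[
\int_{\sR'}f^2\ge\sum_j\alpha_j^2-\sV_\sQ\|f\|_\infty^2,
\]
since $|\cup Q_k|=\sV_\sQ$. By Cauchy--Schwarz,
\[
\|f\|_\infty^2\le\frac{2^n}{|\sR|}\Big(\sum_j|\alpha_j|\Big)^2\le\frac{2^n}{|\sR|}(i+1)\sum_j\alpha_j^2 .
\]
Because $i+1\le N_2$, the hypothesis on $\sV_\sQ$ gives $\sV_\sQ\|f\|_\infty^2\le\epsilon\sum_j\alpha_j^2$. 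Hence the Rayleigh quotient is at most $\mu_i(\sR)/(1-\epsilon)$, which is the claimed bound.

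The only point needing care is the index bookkeeping. The lemma only uses $i+1\le N_2$, and this holds for $i<N_2$ whether or not $N_2$ is an integer. No genuine obstacle is expected.
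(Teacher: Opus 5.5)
Your argument is the paper's own. The paper obtains this lemma in one line by inserting $\Phi=2^{n/2}/\sqrt{|\sR|}$ into Lemma~\ref{hssta}, i.e.\ into the min--max and Cauchy--Schwarz computation of Lemma~\ref{genhssta}, and that is exactly what you carry out.

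However, the one step you singled out as needing care is false. Here $N_2=L_{0,n}\pi^n\Lambda_2^{n/2}+2n\Lambda_2^{(n-1)/2}$ is a real number, generically not an integer, and then $i<N_2$ does \emph{not} imply $i+1\le N_2$. For $i=\lfloor N_2\rfloor$ one has $i<N_2<i+1$. For that index your bound gives only $\sV_\sQ\|f\|_\infty^2\le\epsilon\,\frac{i+1}{N_2}\sum_j\alpha_j^2$ with $\frac{i+1}{N_2}>1$. So you do not reach $\mu_i(\sR')\le\mu_i(\sR)/(1-\epsilon)$ for the top index of the stated range. (The paper's one-line derivation has the same looseness, since Lemma~\ref{hssta} is formulated for $i\le N_2-1$ with $N_2$ an integer there.)

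Either of two repairs closes this.
\begin{enumerate}
\item \textbf{Restrict to what is used.} The lemma is only used to derive $\sN^N_{\sR'}(\lambda)\ge\sN^N_\sR((1-\epsilon)\lambda)$ for $\lambda\le\Lambda_2$. That step involves only indices with $\mu_i(\sR)<(1-\epsilon)\lambda\le\Lambda_2$, i.e.\ $i+1\le\sN^N_\sR(\Lambda_2)\le N_2$, and for those indices your argument is complete.
\item \textbf{Cover the full range $1\le i<N_2$.} Apply Cauchy--Schwarz with the individual norms $\|\varphi_\ell\|_\infty^2=2^{m(\ell)}/|\sR|$. This gives $\|f\|_\infty^2\le|\sR|^{-1}\bigl(\sum_j\alpha_j^2\bigr)\sum_{j=0}^i2^{m(\ell_j)}$. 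Because every $a_k\le\pi$, the eigenvalue of $\varphi_\ell$ is at least $m(\ell)$. Hence $\psi_0$ is the constant, and $\psi_1$, whose eigenvalue is $1$, has $m=1$. It follows that $\sum_{j=0}^i2^{m(\ell_j)}\le 3+(i-1)2^n\le 2^nN_2$. The last inequality holds because $i<N_2$ and $3\cdot 2^{-n}<1$ for $n\ge2$.
\end{enumerate}
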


Applying the same argument as in Section~\ref{pfgeneral}, we
conclude that, for $0<\epsilon<1$ and $\lambda\leq\Lambda_2$, if
\begin{equation}\label{upbdVQ}
	\sV_\sQ
	\le
	\frac{|\sR|\epsilon}{2^nN_2},
\end{equation}
then
\begin{equation}\label{R'R}
	\sN_{\sR'}^N(\lambda)
	\geq
	\sN_\sR^N((1-\epsilon)\lambda).
\end{equation}

We first consider the case $0<\lambda\leq1$. By
\eqref{ineqAn},
\[
\sN^N_{\sR'}(\lambda)
\geq
1
\geq
L_{0,n}\pi^n
>
L_{0,n}|\sR'|\lambda^{\frac n2}.
\]
It remains to consider $1<\lambda\leq\Lambda_2$. Combining
Lemma~\ref{lwbdR2} with \eqref{R'R}, we obtain
\[
\begin{aligned}
	\sN_{\sR'}^N(\lambda)
	&\geq
	L_{0,n}|\sR|
	\lambda^{\frac n2}
	(1-\epsilon)^{\frac n2}+
	\left(1-\frac\pi4\right)L_{0,n-1}|\widetilde\sR|
	\lambda^{\frac{n-1}{2}}
	(1-\epsilon)^{\frac{n-1}{2}}.
\end{aligned}
\]

Choose
\[
\epsilon
=
\min\left(
\frac{(4-\pi)L_{0,n-1}}
{4L_{0,n}\sqrt{\Lambda_2}\,n\pi},
\frac1n
\right).
\]
Since $n/2\geq1$, Bernoulli's inequality gives
\[
(1-\epsilon)^{\frac n2}
\geq
1-\frac n2\epsilon,\qquad (1-\epsilon)^{\frac{n-1}{2}}
>
\frac12.
\] 

Substituting this choice of $\epsilon$ into \eqref{upbdVQ}, we
see that if
\begin{equation}\label{finalVQ}
	\sV_\sQ
	\le
	\frac{|\sR|}{2^nN_2}
	\min\left(
	\frac{(4-\pi)L_{0,n-1}}
	{4L_{0,n}\sqrt{\Lambda_2}\,n\pi},
	\frac1n
	\right),
\end{equation}
then, for all $1<\lambda\leq\Lambda_2$,
\[
\begin{aligned}
	\sN_{\sR'}^N(\lambda)
	&\geq
	L_{0,n}|\sR|
	\lambda^{\frac n2}
	\left(1-\frac n2\epsilon\right)+
	\frac{(4-\pi)L_{0,n-1}}{8}
	|\widetilde\sR|
	\lambda^{\frac{n-1}{2}}\\
	&\geq
	L_{0,n}|\sR'|
	\lambda^{\frac n2}
	-
	L_{0,n}|\sR|
	\lambda^{\frac n2}
	\frac n2\epsilon+
	\frac{(4-\pi)L_{0,n-1}}{8}
	|\widetilde\sR|
	\lambda^{\frac{n-1}{2}}.
\end{aligned}
\]
Since
$
|\sR|=\pi|\widetilde\sR|,
$
the definition of $\epsilon$ and the inequality
$\lambda\leq\Lambda_2$ yield
\[
\begin{aligned}
	L_{0,n}|\sR|
	\lambda^{\frac n2}
	\frac n2\epsilon
	&\leq
	\frac{(4-\pi)L_{0,n-1}}{8}
	|\widetilde\sR|
	\frac{\lambda^{\frac n2}}{\sqrt{\Lambda_2}}\leq
	\frac{(4-\pi)L_{0,n-1}}{8}
	|\widetilde\sR|
	\lambda^{\frac{n-1}{2}}.
\end{aligned}
\]
Consequently, for all $1<\lambda\leq\Lambda_2$, 
\[
\sN_{\sR'}^N(\lambda)
\geq
L_{0,n}|\sR'|\lambda^{\frac n2}.
\]

Combining this estimate with \eqref{dim2ass1} and the case
$0<\lambda\leq1$, we obtain the following theorem.

\begin{theorem}\label{thm:cuboid}
	With the notations above, set 
	\[
	\Lambda_2 := \left( \frac{8M}
	{(4-\pi)L_{0,n-1}|\widetilde\sR|}
	\right)^{\frac2{n-1}}  
	\quad \text{ and } \quad 
	N_2 := L_{0,n}\pi^n\Lambda_2^{\frac n2}
	+ 2n\Lambda_2^{\frac{n-1}{2}}.
	\] 	
	Then the Neumann eigenvalues of $\sR' = \sR\setminus \overline{\cup_{i=1}^MQ_i}$
	satisfy P\'olya's conjecture provided
	\[
	\mathcal A_{\mathcal Q}=
	\sum_{k=1}^M\ell(Q_k)^{n-1}
	<
	\frac{(4-\pi)\pi^{n-1}L_{0,n-1}
		|\widetilde\sR|}{16n}
	\]
	and
	\[
	\mathcal V_{\mathcal Q}=\sum_{k=1}^M \ell(Q_k)^n
	<
	\frac{|\sR|}{2^nN_2}
	\min\left(
	\frac{(4-\pi)L_{0,n-1}}
	{4L_{0,n}\sqrt{\Lambda_2}\,n\pi},
	\frac1n
	\right).
	\]
\end{theorem}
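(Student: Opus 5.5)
The proof assembles the pieces developed in this subsection, following the two-regime scheme of the proof of Theorem \ref{mthm}, with the abstract constants replaced by explicit ones.

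\textbf{Input constants.} Lemma \ref{lwbdR2} gives the \ref{ENP} inequality for $\sR$ with
\[
C_1(\sR)=C_2(\sR)=\left(1-\tfrac{\pi}{4}\right)L_{0,n-1}|\widetilde\sR|.
\]

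\textbf{Large $\lambda$.} I would insert $C_1(\sR)$ into the counting argument of the proof of Theorem \ref{mthm}, which combines Lemma \ref{cover} with Lemma \ref{upbdIn} for each $Q_k$. This gives
\[
\sN^N_{\sR'}(\lambda)\ge L_{0,n}|\sR'|\lambda^{\frac n2}+\left(C_1(\sR)-\tfrac{2n}{\pi^{n-1}}\sA_\sQ\right)\lambda^{\frac{n-1}2}-M.
\]
Under the stated bound on $\sA_\sQ$, the coefficient of $\lambda^{\frac{n-1}2}$ is at least $C_1(\sR)/2$. Then $\frac{C_1(\sR)}2\lambda^{\frac{n-1}2}\ge M$ holds exactly when $\lambda\ge\Lambda_2$, which is the threshold \eqref{dim2ass1}. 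So P\'olya's inequality holds for $\lambda\ge\Lambda_2$.

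\textbf{Small $\lambda$: perturbation.} For $\lambda<\Lambda_2$ I would make the perturbation step explicit.
\begin{itemize}
\item The count $\sN_\sR^N(\Lambda_2)$ is bounded by $N_2$, via $\sR\subset I_n$-type eigenvalue comparison ($a_i\le\pi$ makes the eigenvalues of $\sR$ at least those of $I_n$) together with Lemma \ref{upbdIn}.
\item The explicit cosine eigenbasis gives the uniform $L^\infty$ bound $\Phi\le 2^{n/2}|\sR|^{-1/2}$, independent of $\ell$.
\item Lemma \ref{eigenR'R} then yields $\sN_{\sR'}^N(\lambda)\ge\sN^N_\sR((1-\epsilon)\lambda)$ whenever $\sV_\sQ\le |\sR|\epsilon/(2^nN_2)$.
\end{itemize}

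\textbf{Small $\lambda$: two cases.} For $\lambda\le1$ the trivial bound suffices:
\[
\sN^N_{\sR'}\ge1\ge L_{0,n}\pi^n>L_{0,n}|\sR'|\lambda^{n/2},
\]
using \eqref{ineqAn}. For $1<\lambda\le\Lambda_2$, I would apply Lemma \ref{lwbdR2} at $(1-\epsilon)\lambda$ and use Bernoulli's inequality $(1-\epsilon)^{n/2}\ge1-\frac n2\epsilon$. Choosing $\epsilon\le1/n$ also gives $(1-\epsilon)^{\frac{n-1}2}>\frac12$. The volume loss $L_{0,n}|\sR|\lambda^{n/2}\frac n2\epsilon$ must then be absorbed by the surviving half of the second term, $\frac{(4-\pi)}{8}L_{0,n-1}|\widetilde\sR|\lambda^{\frac{n-1}2}$.

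\textbf{Main obstacle.} This absorption is the delicate step. Since $|\sR|=\pi|\widetilde\sR|$ and $\lambda^{1/2}\le\Lambda_2^{1/2}$, it holds once
\[
\epsilon\le\frac{(4-\pi)L_{0,n-1}}{4L_{0,n}\sqrt{\Lambda_2}\,n\pi}.
\]
This gives the stated minimum, and plugging it into $\sV_\sQ\le|\sR|\epsilon/(2^nN_2)$ produces the volume threshold in the theorem. Finally, I would check that the strict inequalities in the hypotheses cover the boundary cases $\lambda=\Lambda_2$ and $\lambda=1$.
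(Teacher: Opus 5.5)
Your proposal is correct and follows the paper's argument step for step. It takes the \ref{ENP} constants for $\sR$ from Lemma~\ref{lwbdR2} and handles $\lambda\ge\Lambda_2$ through Lemmas~\ref{cover} and~\ref{upbdIn} with the stated bound on $\sA_\sQ$. For $\lambda\le\Lambda_2$ it uses Lemma~\ref{eigenR'R}, with $N_2$ obtained from the eigenvalue comparison with $I_n$ and $\Phi\le 2^{n/2}|\sR|^{-1/2}$ from the cosine basis, and then absorbs the Bernoulli loss into half of the $\lambda^{\frac{n-1}{2}}$ term via $|\sR|=\pi|\widetilde\sR|$ and $\lambda\le\Lambda_2$, exactly as the paper does.
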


In particular, taking
\[
\sR=(0,\pi)\times(0,\pi)
\qquad\text{and}\qquad
M=1
\]
in the preceding argument, we obtain that if $Q\subset\sR$ is an
open square with side length $s\leq 1/60$, then the Neumann eigenvalues of $\sR'=\sR\setminus\overline Q
$ satisfy P\'olya's conjecture.

\section{Domains satisfying the \ref{ENP} inequality}\label{sec3}
\subsection{Proof of Proposition \ref{pertil}}

We first establish the strict Neumann P\'olya inequality for periodic Euclidean tiling domains. We begin with a lattice-counting lemma. The following strict ball version of Blichfeldt's principle follows from the classical averaging argument \cite{Bli14} and the fact that every convex multiple translational tile is a polytope \cite{GRS12}.

\begin{lemma}[Strict Blichfeldt lemma for Euclidean balls]\label{lem:strict-blichfeldt-ball}
	Let $n\ge2$, let $\Lambda\subset\R^n$ be a full-rank lattice with covolume $d_\Lambda$, and let $r>0$.   
	Then there exists a translation $\theta\in\R^n$ such that
	\begin{equation}\label{eq:blichfeldt-ceil}
		\#\bigl((\Lambda+\theta)\cap B_r\bigr)\ge \left[\frac{|B_r|}{d_\Lambda}+1\right].
	\end{equation} 
\end{lemma}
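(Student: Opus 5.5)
The plan is to use the classical Blichfeldt averaging argument and to get strictness from the rigidity of multiple tilings by convex bodies \cite{GRS12}. Write $x:=|B_r|/d_\Lambda$ and
\[
f(\theta):=\#\bigl((\Lambda+\theta)\cap B_r\bigr)=\sum_{v\in\Lambda}\mathbbm 1_{B_r}(v+\theta),\qquad \theta\in\R^n .
\]
This $f$ is $\Lambda$-periodic and takes values in $\Z_{\ge0}$. Since $\lfloor x+1\rfloor=\lfloor x\rfloor+1$ and $f$ is integer valued, the conclusion \eqref{eq:blichfeldt-ceil} is equivalent to finding one $\theta$ with $f(\theta)>x$. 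This holds whether or not $x$ is an integer.

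\emph{Step 1 (averaging).} Fix a measurable fundamental domain $F$ of $\Lambda$, so $|F|=d_\Lambda$. By Tonelli and the fact that the translates $F+v$, $v\in\Lambda$, partition $\R^n$ up to null sets,
\[
\int_F f(\theta)\,d\theta=\sum_{v\in\Lambda}\int_{F+v}\mathbbm 1_{B_r}(y)\,dy=|B_r| .
\]
Hence the average of $f$ over $F$ is exactly $x$.

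\emph{Step 2 (the degenerate case forces a multiple tiling).} Argue by contradiction and suppose $f(\theta)\le x$ for every $\theta$. Combined with Step 1, $f-x\le0$ has zero integral over $F$, so $f=x$ almost everywhere. In particular $x$ is a positive integer $k$, and
\[
\sum_{v\in\Lambda}\mathbbm 1_{B_r}(y-v)=k\quad\text{for a.e. } y\in\R^n .
\]
This says that the ball $B_r$ is a $k$-fold translational tile by the lattice $\Lambda$, modulo null sets. Whether the ball is taken open or closed does not matter here, because $\partial B_r$ is a null set.

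\emph{Step 3 (contradiction).} By \cite{GRS12}, every convex body that multiply tiles $\R^n$ by translations is a (centrally symmetric) polytope. A Euclidean ball in dimension $n\ge2$ is convex but not a polytope, so Step 2 is impossible. Therefore $f(\theta_0)>x$ for some $\theta_0$, and integrality gives $f(\theta_0)\ge\lfloor x\rfloor+1=\lfloor x+1\rfloor$. In fact the same reasoning shows $\{f>x\}$ has positive measure. Note that $n\ge2$ is genuinely needed: for $n=1$ the ball is an interval, which does tile.

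The main obstacle is Step 3, which is the only non-elementary input. The point needing care is that the multiple-tiling theorem must apply to tilings that hold only almost everywhere. The Gravin--Robins--Shiryaev result is formulated in exactly that measure-theoretic sense, so Step 2 produces precisely its hypothesis. If one wanted to avoid citing it, a direct alternative for the ball would be a Fourier argument: $k$-fold tiling by $\Lambda$ forces $\widehat{\mathbbm 1_{B_r}}$ to vanish on $\Lambda^*\setminus\{0\}$, and one would contradict this using the Bessel-function zeros of $\widehat{\mathbbm 1_{B_r}}$. That route is messier, so I would simply cite \cite{GRS12}.
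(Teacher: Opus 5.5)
Your proposal is correct and follows the same route the paper indicates: the Blichfeldt averaging identity $\int_F f=|B_r|$ shows that the mean of $f$ is $|B_r|/d_\Lambda$. If no translate beats this mean, then $f$ is a.e.\ constant, so $B_r$ is a multiple lattice tile, which the Gravin--Robins--Shiryaev theorem excludes because a ball in dimension $n\ge2$ is not a polytope. You also supply details the paper leaves implicit: the integrality reduction to $f(\theta)>|B_r|/d_\Lambda$, the a.e.\ formulation of the tiling, and the irrelevance of $\partial B_r$.
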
 

We next construct Neumann trial functions via shifted dual-lattice plane waves. 
\begin{lemma}\label{lem:periodic-plane-waves}
	Let $C$ be the translational supertile in Definition \ref{def:periodic}, and let
	\[
	\Gamma^*:=\{\xi\in\R^n:\xi\cdot\gamma\in2\pi\mathbb Z
	\text{ for every }\gamma\in\Gamma\}
	\]
	be the dual lattice.  Then for every $\theta\in\R^n$ and every $\lambda>0$,
	\begin{equation}\label{eq:plane-wave-bracket}
		\#\{\xi\in\Gamma^*:|\xi+\theta|^2<\lambda\}
		\le N_C^N(\lambda)
		\le \ell N_\Omega^N(\lambda).
	\end{equation}
\end{lemma}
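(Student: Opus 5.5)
The two inequalities in \eqref{eq:plane-wave-bracket} are handled separately. The first is a min-max argument with explicit trial functions on $C$. The second is a domain-decomposition bound comparing $C$ with its $\ell$ constituent tiles.

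\textbf{First inequality.} For $\xi\in\Gamma^*$ set $e_\xi(x):=e^{i(\xi+\theta)\cdot x}$, restricted to $C$. Distinct $\xi,\xi'\in\Gamma^*$ give orthogonal functions in $L^2(C)$:
\[
\int_C e_\xi\overline{e_{\xi'}}\,dx=\int_C e^{i(\xi-\xi')\cdot x}\,dx=0 .
\]
This holds because $C$ is a fundamental domain of $\Gamma$ modulo null sets. Hence integration over $C$ may be replaced by integration over any fundamental domain, such as a parallelepiped spanned by a basis of $\Gamma$. There the integral of the nontrivial character $e^{i(\xi-\xi')\cdot x}$ of $\R^n/\Gamma$ vanishes.

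The same reasoning gives orthogonality of gradients:
\[
\int_C\nabla e_\xi\cdot\overline{\nabla e_{\xi'}}\,dx=(\xi+\theta)\cdot(\xi'+\theta)\int_C e^{i(\xi-\xi')\cdot x}\,dx=0,\qquad \xi\ne\xi' .
\]
Let $E$ be the span of the $e_\xi$ with $|\xi+\theta|^2<\lambda$. Both quadratic forms are diagonal on $E$, so every $f\in E$ satisfies
\[
\int_C|\nabla f|^2\le \max|\xi+\theta|^2\int_C|f|^2 ,
\]
and this maximum is strictly below $\lambda$. The functions are smooth, so $E\subset H^1(C)$ and no boundary conditions are needed. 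The complex-valued trial space is harmless, since the Neumann form is real and the complex min-max gives the same eigenvalues. Applying \eqref{minmax} on $C$ yields $\mu_{\dim E-1}(C)<\lambda$, i.e. $\dim E\le\sN^N_C(\lambda)$.

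\textbf{Second inequality.} Write $\overline C=\bigcup_{j=1}^\ell\overline{\Omega_j}$ with the $\Omega_j$ pairwise disjoint, so every point lies in at most one of them. The multiplicity is $h=1$, and Lemma~\ref{cover} gives $\sN^N_C(\lambda)\le\sum_j\sN^N_{\Omega_j}(\lambda)$. Each $\Omega_j$ is isometric to $\Omega$, so $\sN^N_{\Omega_j}=\sN^N_\Omega$ and the sum is $\ell\,\sN^N_\Omega(\lambda)$.

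The only technical point is that Lemma~\ref{cover} is stated for Lipschitz $C$. I expect this to be the main obstacle, since $C$ need not be Lipschitz. To avoid it, I would prove the bound directly by restriction, which is exactly the argument behind Lemma~\ref{cover} with $h=1$:
\begin{enumerate}
\item The map $H^1(C)\ni f\mapsto(f|_{\Omega_j})_j\in\bigoplus_j H^1(\Omega_j)$ is injective and isometric for both the $L^2$ norm and the Dirichlet form. This uses that $\partial\Omega_j$ has measure zero, which holds since each tile is smooth or polyhedral.
\item Suppose $V\subset H^1(C)$ is a subspace with Rayleigh quotients below $\lambda$ of dimension exceeding $\sum_j\sN^N_{\Omega_j}(\lambda)$. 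Then $V$ contains a nonzero $f$ whose restrictions are orthogonal to the eigenfunctions of each $\Omega_j$ with eigenvalue below $\lambda$.
\item For such an $f$, each $\int_{\Omega_j}|\nabla f|^2\ge\lambda\int_{\Omega_j}|f|^2$. Summing over $j$ contradicts the Rayleigh bound on $V$.
\end{enumerate}
Combining the two inequalities gives \eqref{eq:plane-wave-bracket}.
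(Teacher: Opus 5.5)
Your proposal is correct and follows the paper's route: the shifted plane waves $e^{i(\xi+\theta)\cdot x}$, $\xi\in\Gamma^*$, are orthogonal on $C$ and diagonalize both quadratic forms there, so min-max gives the first inequality, and the second inequality is the Neumann bracketing $\sN^N_C\le\sum_j\sN^N_{\Omega_j}$, which the paper leaves to ``the standard min-max principle.'' Your explicit restriction argument is exactly that bracketing, and it avoids the Lipschitz hypothesis of Lemma~\ref{cover}; note that the fact it needs, $|C\setminus\bigcup_j\Omega_j|=0$, already follows from \eqref{eq:period-cell-volume}, so you do not need to invoke smoothness or polyhedrality of the tiles.
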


\begin{proof} 
	For every nonzero $\zeta\in\Gamma^*$, we have 
	\[ 
	\int_C e^{i\zeta\cdot x}\,dx=0.
	\]
	As a result, given any $\theta$, the restrictions to $C$ of
	\[
	u_{\xi,\theta}(x):=e^{i(\xi+\theta)\cdot x},\qquad \xi\in\Gamma^*,
	\]
	are pairwise orthogonal in $L^2(C)$.  They also diagonalize the Dirichlet form on their span:
	\[
	\int_C\nabla u_{\xi,\theta}\cdot
	\overline{\nabla u_{\eta,\theta}}\,dx
	=|\xi+\theta|^2|C|\,\delta_{\xi\eta}.
	\]
	The conclusion follows from the standard min-max principle.  
\end{proof}

\begin{lemma}[Strict P\'olya for periodic tilings]\label{lem:periodic-strict}
	Let $n\ge2$ and let $\Omega\subset\R^n$ be a bounded periodic Euclidean tiling domain with compact Neumann embedding.  Then
	\begin{equation}\label{eq:periodic-strict}
		N^N_\Omega(\lambda)>L_{0,n}|\Omega|\lambda^{\frac n2},\qquad \lambda>0.
	\end{equation}
\end{lemma}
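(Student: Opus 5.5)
The plan is to combine Lemma~\ref{lem:periodic-plane-waves} with Lemma~\ref{lem:strict-blichfeldt-ball} applied to the dual lattice $\Gamma^*$, choosing the radius to be $\sqrt\lambda$.

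\emph{Step 1: covolume of the dual lattice.} Since $\Gamma^*$ is defined with the $2\pi$ normalization, its covolume is
\[
d_{\Gamma^*}=\frac{(2\pi)^n}{\operatorname{covol}(\Gamma)}=\frac{(2\pi)^n}{\ell|\Omega|},
\]
where the last equality is \eqref{eq:period-cell-volume}.

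\emph{Step 2: choice of shift.} Fix $\lambda>0$ and apply Lemma~\ref{lem:strict-blichfeldt-ball} to the lattice $\Gamma^*$ with $r=\sqrt\lambda$. This produces a shift $\theta$ such that the set $\{\xi\in\Gamma^*:|\xi+\theta|<\sqrt\lambda\}$ has at least
\[
\left[\frac{|B_{\sqrt\lambda}|}{d_{\Gamma^*}}+1\right]
\]
elements. (The sign convention $\Gamma^*+\theta$ versus $\xi+\theta$ is harmless, since we may replace $\theta$ by $-\theta$.)

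Next we evaluate the ratio. Using $|B_{\sqrt\lambda}|=\omega_n\lambda^{n/2}$ and $L_{0,n}=\omega_n/(2\pi)^n$,
\[
\frac{|B_{\sqrt\lambda}|}{d_{\Gamma^*}}=\frac{\omega_n\lambda^{n/2}\,\ell|\Omega|}{(2\pi)^n}=\ell L_{0,n}|\Omega|\lambda^{n/2}.
\]
Here I read $[\cdot]$ as the integer part. For any real $x$ we have $[x+1]>x$, so the count is strictly larger than $\ell L_{0,n}|\Omega|\lambda^{n/2}$.

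\emph{Step 3: conclusion.} Lemma~\ref{lem:periodic-plane-waves} bounds this count above by $\ell\,\sN^N_\Omega(\lambda)$. Dividing by $\ell$ gives \eqref{eq:periodic-strict}.

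\emph{Main obstacle.} The only delicate point is the strictness. It depends on using the $+1$ inside the floor, as stated in Lemma~\ref{lem:strict-blichfeldt-ball}, instead of the weaker classical Blichfeldt bound $\ge |B_r|/d$. Two further checks are needed:
\begin{itemize}
\item The ball in Lemma~\ref{lem:strict-blichfeldt-ball} should be open, to match the strict inequality $|\xi+\theta|^2<\lambda$.
\item The second inequality in \eqref{eq:plane-wave-bracket}, namely $\sN^N_C\le \ell\,\sN^N_\Omega$, follows from Lemma~\ref{cover} with $h=1$, because $C$ is covered by the $\ell$ congruent copies of $\Omega$ with disjoint interiors. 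This is where the compact Neumann embedding hypothesis is used.
\end{itemize}
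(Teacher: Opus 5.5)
Your proposal is correct and follows the paper's proof essentially verbatim. The paper also applies Lemma~\ref{lem:strict-blichfeldt-ball} to $\Gamma^*$ with $r=\sqrt\lambda$ and uses $\operatorname{covol}(\Gamma^*)=(2\pi)^n/|C|=(2\pi)^n/(\ell|\Omega|)$ to identify the ratio with $\ell L_{0,n}|\Omega|\lambda^{n/2}$. It likewise gets strictness from the $+1$ inside the integer part and concludes via Lemma~\ref{lem:periodic-plane-waves} after dividing by $\ell$; no sign change of $\theta$ is needed, since $\#\{\xi\in\Gamma^*:|\xi+\theta|<r\}=\#\bigl((\Gamma^*+\theta)\cap B_r\bigr)$ directly.
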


\begin{proof}
	Fix $\lambda>0$ and put $r=\sqrt\lambda$.  For the dual lattice one has $ \operatorname{covol}(\Gamma^*)=\frac{(2\pi)^n}{|C|}.$ 
	Therefore, by \cref{lem:strict-blichfeldt-ball}, some shift $\theta$ satisfies
	\[
	\#\{\xi\in\Gamma^*:|\xi+\theta|^2<\lambda\}\!\ge\! \left[\frac{|B_r||C|}{(2\pi)^n}+1\right]\!>\! \frac{|B_r||C|}{(2\pi)^n}=\frac{\omega_n\lambda^{n/2}|C|}{(2\pi)^n} 
	=\ell L_{0,n}|\Omega|\lambda^{\frac n2}.
	\]
	Combining this with \cref{lem:periodic-plane-waves} yields
	\[
	\ell N_\Omega^N(\lambda)> \ell L_{0,n}|\Omega|\lambda^{\frac n2},
	\]
	and hence the strict P\'olya inequality.
\end{proof}

If $\Omega$ has smooth boundary, Safarov and Filonov \cite{SafarovFilonov2010}, using earlier microlocal results summarized in \cite{SafarovVassiliev1997}, proved that
\begin{equation}\label{eq:SF-one-sided}
	N_\Omega^N(\lambda)
	\ge L_{0,n}|\Omega|\lambda^{n/2}
	+\frac18L_{0,n-1}|\partial\Omega| \lambda^{(n-1)/2}
	+o(\lambda^{(n-1)/2}),\qquad \lambda\to\infty.
\end{equation} 
Combining this estimate with Lemma~\ref{lem:periodic-strict} and the fact that $\sN_\Omega^N(\lambda)\geq1$ for all $\lambda>0$, we conclude that $\Omega$ satisfies the \ref{ENP} inequality.

If $\Omega$ is polyhedral, Vassiliev's classical two-term asymptotic formula for piecewise smooth domains applies, since every bounded polyhedral domain in $\mathbb R^n$, $n\geq2$, satisfies the billiard nonperiodicity hypothesis \cite{Vassiliev1986}. Arguing as above, we conclude that $\Omega$ satisfies the \ref{ENP} inequality.

\subsection{Proof of Proposition \ref{gentile}}\label{pfgentile}

After scaling, one may assume
\[
\diam(\Omega)=1.
\]
Up to rotations and translations, suppose that copies of $\Omega$ tile $U\times \R^{n_2}$. Let $L>0$ be a sufficiently large integer. Set
\[
R_L:=U\times(-L,L)^{n_2}.
\]

Let $N_L$ denote the number of copies of $\Omega$ intersecting $R_L$.
We claim that
\begin{equation}\label{genL}
	|U|(2L)^{n_2}\le N_L|\Omega|\le |U|(2L+2)^{n_2}.
\end{equation}
Indeed, every point of
\[
R_L=U\times(-L,L)^{n_2}
\]
belongs to a unique copy of $\Omega$. Since these copies are pairwise disjoint and each intersects $R_L$, we obtain
\[
|U|(2L)^{n_2}\le N_L|\Omega|.
\]

On the other hand, let $x\in R_L$ and let $\Omega_x$ be the unique copy of $\Omega$ containing $x$. Then for every $y\in\Omega_x$,
\[
\operatorname{dist}(x,y)\le\operatorname{diam}(\Omega)\le1.
\]
Hence
\[
y\in U\times(-L-1,L+1)^{n_2},
\]
which implies
$
\Omega_x\subset R_{L+1}.
$
Therefore every copy of $\Omega$ intersecting $R_L$ is contained in $R_{L+1}$. Since these copies are pairwise disjoint,
\[
N_L|\Omega|
\le
|U|(2L+2)^{n_2},
\]
proving \eqref{genL}.

Let $N_L'$ denote the number of copies of $\Omega$ intersecting $R_{L+1}$ but not intersecting $R_L$. Similarly,
\begin{equation}\label{genL+2}
	N_L'|\Omega|
	\le
	|U|\left((2L+4)^{n_2}-(2L)^{n_2}\right).
\end{equation}
Let $A_L$ be the set obtained by removing from $R_{L+2}$ the closure of the union of the $N_L$ copies of $\Omega$ intersecting $R_L$, and let 
\[
B_L=R_{L+2}\setminus \overline{R_{L+1}}.
\]
Then we easily obtain the following lemma.
\begin{lemma}\label{genlem}
	\begin{enumerate}
		\item The $N_L$ copies of $\Omega$, together with $A_L$, tile $R_{L+2}$ in the sense that the closure of their union equals $\overline{R_{L+2}}$ and they are pairwise disjoint.
		\item The $N_L'$ copies of $\Omega$, together with $B_L$, cover $A_L$. Moreover, every point in $A_L$ is contained in at most two elements of this cover.
		\item The set $B_L$ can be partitioned into $(2L+4)^{n_2}-(2L+2)^{n_2}$ pairwise disjoint copies of $
		\widetilde U:= U\times (0,1)^{n_2}.$
	\end{enumerate}
\end{lemma}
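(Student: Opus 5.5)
The plan is to verify the three items directly from the tiling structure and the diameter bound $\diam(\Omega)=1$. Throughout, we use that the copies of $\Omega$ tiling $U\times\R^{n_2}$ are pairwise disjoint open sets whose closures cover $\overline{U}\times\R^{n_2}$. Two inclusions follow from the diameter bound, exactly as in the proof of \eqref{genL}. Every copy meeting $R_L$ lies in $R_{L+1}$, and every copy meeting $R_{L+1}$ lies in $R_{L+2}$.

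For item (1), let $W$ be the union of the $N_L$ copies meeting $R_L$. Since $W\subset R_{L+1}\subset R_{L+2}$, the set $A_L=R_{L+2}\setminus\overline W$ is open and disjoint from each of these copies, and the copies are pairwise disjoint. For the closure statement, $\overline{A_L}\cup\overline W\supset \overline{R_{L+2}}$: a point of $R_{L+2}$ either lies in $\overline W$ or lies in the open set $R_{L+2}\setminus\overline W=A_L$. Taking closures gives equality, because every piece is contained in $\overline{R_{L+2}}$.

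For item (2), take $x\in A_L$. If $x\notin \overline{R_{L+1}}$, then $x\in B_L$. Otherwise $x\in\overline{R_{L+1}}\setminus \overline W$. Here $x$ lies in the closure of some tile, and that tile must meet $R_{L+1}$ (we may use that the $n_2$-coordinates of $x$ lie in $[-L-1,L+1]$ and perturb slightly into the tile). The tile does not meet $R_L$, since otherwise $x\in\overline W$. Hence it is one of the $N_L'$ copies, and $x$ lies in its closure. To get a genuine cover of $A_L$ up to a null set, I will note the boundary exception, which is harmless for Lemma~\ref{cover} because it only needs $\overline{A_L}$ to be covered by closures. For the multiplicity bound, the $N_L'$ copies are pairwise disjoint, and $B_L$ is one additional set, so each point lies in at most two members.

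For item (3), write
\[
(-L-2,L+2)^{n_2}\setminus[-L-1,L+1]^{n_2}
\]
as the union, up to null sets, of the unit cubes $k+(0,1)^{n_2}$ with $k\in\Z^{n_2}$ lying in the big cube but not in the middle one. There are $(2L+4)^{n_2}-(2L+2)^{n_2}$ such cubes. Multiplying by $U$ gives the copies of $\widetilde U$.

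The only delicate point is the closure/boundary bookkeeping in item (2). I expect it to be a minor issue handled as above, rather than a real obstacle.
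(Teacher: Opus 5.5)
Your direct verification is correct and matches the paper, which gives no argument beyond saying the lemma follows easily from the tiling and the bound $\diam(\Omega)=1$; your treatment of (1), of (3), and of (2) in the closure sense that Lemma~\ref{cover} actually requires is the intended one. Two small points in (2) should be tightened:

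\begin{enumerate}
\item For $x\in A_L\cap\partial R_{L+1}$ it is not true that every tile whose closure contains $x$ meets $R_{L+1}$. Such $x$ already lie in $\overline{B_L}$, however. For $x\in R_{L+1}$, any tile whose closure contains $x$ does meet $R_{L+1}$, so your argument goes through there.
\item You should record that the $N_L'$ copies and $B_L$ are contained in $A_L$, since Lemma~\ref{cover} needs the pieces to be subsets of the domain. The copies lie in $R_{L+2}$ and, being open and disjoint from $W$, miss $\overline W$. The set $B_L$ misses $\overline{R_{L+1}}\supset\overline W$.
\end{enumerate}
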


For $\lambda>0$, by Lemma \ref{genlem}, Lemma \ref{cover} and \eqref{genL+2},
\begin{equation*}
	\begin{aligned}
	    \sN^N_{R_{L+2}}(\lambda)
	 \le \sN^N_{A_L}(\lambda)+N_L\cdot \sN^N_\Omega(\lambda)
	 \le \sN^N_{B_L}(2\lambda)+N_L'\cdot \sN^N_\Omega(2\lambda)+N_L\cdot \sN^N_\Omega(\lambda),
	\end{aligned}
\end{equation*}
and thus
\begin{equation}\label{genmon}
	\begin{aligned}
\sN^N_{R_{L+2}}(\lambda)
\le& \left((2L+4)^{n_2}\!-\!(2L+2)^{n_2}\right)\sN^N_{\widetilde U}(2\lambda)\\&+\frac{|U|}{|\Omega|}\left((2L+4)^{n_2}\!-\!(2L)^{n_2}\right)\sN^N_\Omega(2\lambda)  +
N_L\cdot \sN^N_\Omega(\lambda).
\end{aligned}
\end{equation}
Since cubes in $\R^{n_2}$ satisfy P\'olya's conjecture,
\[
\sN^N_{R_{L+2}}(\lambda)=\sum_{i\ge 0}\sN^N_{(-L-2,L+2)^{n_2}}(\lambda-\mu_i(U))\ge \sum_{i\ge 0}L_{0,n_2}(2L)^{n_2}\left(\lambda-\mu_i(U)\right)_+^{\frac {n_2}2}.
\]
Then, by Lemma \ref{FL3term}, there exists $\Lambda>0$  such that for all $\lambda>\Lambda$,
\[
\sum_{i\ge0}\left(\lambda-\mu_i(U)\right)_+^{\frac{n_2}{2}}
\ge
L_{\frac{n_2}{2},n_1}|U|\lambda^{\frac {n_1+n_2}2}
+\frac15L_{\frac{n_2}{2},n_1-1}|\partial U|\lambda^{\frac{n_1+n_2-1}{2}}.
\]
Therefore, by \eqref{Cd1d2} and the identity $n_1+n_2=n$, for all $\lambda>\Lambda$,
\[
\sN^N_{R_{L+2}}(\lambda)\ge L_{0,n}(2L)^{n_2}|U|\lambda^{\frac n2}+\frac 15L_{0,n-1}(2L)^{n_2}|\partial U|\lambda^{\frac {n-1}2}.
\]
Combining this with \eqref{genmon},  we have
\[
\begin{aligned}
	   &\left((2L+4)^{n_2}\!-\!(2L+2)^{n_2}\right)\!\sN^N_{\widetilde U}(2\lambda)\!+\!\frac{|U|}{|\Omega|}\!\left((2L+4)^{n_2}\!-\!(2L)^{n_2}\right)\!\sN^N_\Omega(2\lambda)\!+\! N_L \! \cdot\! \sN^N_\Omega(\lambda)\\
	& \quad \ge L_{0,n}(2L)^{n_2}|U|\lambda^{\frac n2}+\frac 15L_{0,n-1}(2L)^{n_2}|\partial U|\lambda^{\frac {n-1}2}.
\end{aligned}
\]
Dividing both sides by $(2L)^{n_2}|U|$ and letting $L\to\infty$, it follows from \eqref{genL} that for $\lambda>\Lambda$,
\begin{align}\label{prop16-1}
\sN^N_\Omega(\lambda)\ge L_{0,n}|\Omega|\lambda^{\frac n2}+\frac {|\partial U|}{5|U|}L_{0,n-1}|\Omega|\lambda^{\frac{n-1}2}.
\end{align}

On the other hand, since $n_2/2\ge1$, by Lemma \ref{rfofGJL} and the Aizenman--Lieb identity, for all $\lambda>0$,
\[
\sN^N_{R_{L+2}}(\lambda)> L_{0,n}(2L)^{n_2}|U|\lambda^{\frac n2}.
\]
Combining this with \eqref{genmon}, we have
\[
\begin{aligned}
	&\left((2L+4)^{n_2}\!-\!(2L+2)^{n_2}\right)\!\sN^N_{\widetilde U}(2\lambda)\!+\!\frac{|U|}{|\Omega|}\!\left((2L+4)^{n_2}\!-\!(2L)^{n_2}\right)\!\sN^N_\Omega(2\lambda)+N_L \! \cdot\! \sN^N_\Omega(\lambda)\\
& \quad	>L_{0,n}(2L)^{n_2}|U|\lambda^{\frac n2}.
\end{aligned}
\]
Dividing both sides by $(2L)^{n_2}|U|$ and letting $L\to\infty$ again,   we get
\[
\sN^N_\Omega(\lambda)>L_{0,n}|\Omega|\lambda^{\frac n2}, \qquad \forall \lambda>0.
\]
Combining this estimate with the lower bound established above for $\lambda>\Lambda$, together with the fact that $\sN^N_\Omega(\lambda)\ge1$ for all $\lambda>0$, we conclude that there
exists a constant $C_2(\Omega)>0$ such that for all $\lambda>0$,
\[
\sN^N_\Omega(\lambda)\ge L_{0,n}|\Omega|\lambda^{\frac n2}+C_2(\Omega).
\]
Therefore, using \eqref{prop16-1}, and by decreasing the constant $
\frac {|\partial U|}{5|U|}L_{0,n-1}|\Omega|
$ 
if necessary, there exists a constant $C_1(\Omega)>0$ such that for all $\lambda>0$,
\[
\sN_\Omega^N(\lambda)\ge L_{0,n}|\Omega|\lambda^{\frac n2}+\max\left(C_1(\Omega)\lambda^{\frac{n-1}2},C_2(\Omega)\right).
\]
This completes the proof.
\subsection{Proof of Proposition \ref{strip}} 
Suppose that  $\Omega$ tiles $\R^{n-1}\times(0,\pi)$ 
via rotations and translations. For brevity, set
\[
D:=\diam(\Omega).
\]

As in Section~\ref{pfgentile}, let $L$ be a sufficiently large
positive integer, and let $N_L$ denote the number of copies of
$\Omega$ intersecting
\[
R_L:=(-LD,LD)^{n-1}\times(0,\pi).
\]
Then
\begin{equation}\label{NL}
	(2LD)^{n-1}\pi
	\leq
	N_L|\Omega|
	\leq
	(2(L+1)D)^{n-1}\pi.
\end{equation}
Let $N'_L$ denote the number of copies of $\Omega$ intersecting
$R_{L+1}$ but not intersecting $R_L$. Then
\begin{equation}\label{NL'}
	N'_L|\Omega|
	\leq
	\left(
	(2(L+2))^{n-1}-(2L)^{n-1}
	\right)
	D^{n-1}\pi.
\end{equation}
Let $A_L$ be the set obtained by removing from $R_{L+2}$ the
closure of the union of the $N_L$ copies of $\Omega$ intersecting
$R_L$, and define
\[
B_L
:=
R_{L+2}\setminus\overline{R_{L+1}}.
\]

As in Lemma~\ref{genlem}, we have the following lemma.

\begin{lemma}\label{3}
	\begin{enumerate}
		\item
		The $N_L$ copies of $\Omega$, together with $A_L$, tile
		$R_{L+2}$.
		
		\item
		The $N'_L$ copies of $\Omega$, together with $B_L$, cover
		$A_L$. Moreover, every point of $A_L$ belongs to at most
		two elements of this cover.
		
		\item
		The set $B_L$ can be partitioned into $ (2(L+2))^{n-1}-(2(L+1))^{n-1}$
		pairwise disjoint copies of $\widetilde\Omega := (0,D)^{n-1}\times(0,\pi).$
	\end{enumerate}
\end{lemma}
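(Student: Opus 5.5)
The plan is to argue exactly as for Lemma~\ref{genlem}, using two facts. First, every copy of $\Omega$ in the tiling lies in the strip $\R^{n-1}\times(0,\pi)$ and has diameter $D$. Second, the boxes $R_L$ grow by exactly $D$ in each of the first $n-1$ coordinate directions when $L$ increases by one, while the last coordinate interval stays $(0,\pi)$. Throughout, statements about tiling and covering are understood modulo the null boundaries of the tiles, in the sense needed by Lemma~\ref{cover}: the closures of the pieces cover the closure of the set.

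\textbf{The diameter observation.} Let $\Omega_j$ be a copy of $\Omega$ that meets $R_L$, and pick a point $x\in\Omega_j\cap R_L$. For every $y\in\Omega_j$ each horizontal coordinate satisfies $|y_i-x_i|\le D$ for $1\le i\le n-1$, and $y_n\in(0,\pi)$ because tiles lie in the strip. Hence $\Omega_j\subset R_{L+1}$. In the same way, every copy meeting $R_{L+1}$ lies in $R_{L+2}$.

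\textbf{Item (1).} The $N_L$ copies are pairwise disjoint and, by the diameter observation, contained in $R_{L+1}\subset R_{L+2}$. The set $A_L$ is open and is by definition disjoint from their closures. Every point of $R_{L+2}$ lies in the closure of some tile, so the closure of the union of the $N_L$ copies together with $A_L$ is all of $\overline{R_{L+2}}$.

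\textbf{Item (2).} Take $x\in A_L$ and split into two cases.
\begin{itemize}
\item If $x\in R_{L+1}$, then $x$ lies in the closure of some tile $\Omega_j$. Since $R_{L+1}$ is open, $\Omega_j$ meets $R_{L+1}$. It cannot meet $R_L$, since otherwise $x$ would lie in the closure of the removed union, which contradicts $x\in A_L$. So $\Omega_j$ is one of the $N'_L$ copies, and these lie inside $R_{L+2}$ by the diameter observation.
\item If $x\notin R_{L+1}$, then $x\in\overline{R_{L+2}}\setminus R_{L+1}$, which is contained in $\overline{B_L}$.
\end{itemize}
For the multiplicity bound, note that the tiles are pairwise disjoint. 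Hence a point of $A_L$ lies in at most one of the $N'_L$ copies, and possibly also in $B_L$, so it belongs to at most two sets of the cover.

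\textbf{Item (3).} Write
\[
B_L=\bigl((-(L+2)D,(L+2)D)^{n-1}\setminus[-(L+1)D,(L+1)D]^{n-1}\bigr)\times(0,\pi).
\]
Subdivide the horizontal cube $(-(L+2)D,(L+2)D)^{n-1}$ into the grid of $(2(L+2))^{n-1}$ cubes of side $D$. Exactly $(2(L+1))^{n-1}$ of these grid cubes make up the inner cube. The remaining $(2(L+2))^{n-1}-(2(L+1))^{n-1}$ cubes, each multiplied by $(0,\pi)$, are translates of $\widetilde\Omega$; they are pairwise disjoint and partition $B_L$ up to a null set.

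The only step requiring care is item (2): it must handle points lying on tile boundaries, and it uses the fact that the only possible overlaps are between a copy and $B_L$. This is what keeps the multiplicity at most two. Everything else is bookkeeping.
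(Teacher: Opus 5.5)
Your argument is correct and is essentially what the paper intends. The paper gives no separate proof: it defers to Lemma~\ref{genlem} (``easily obtained''), whose only real input is the same diameter observation used in proving \eqref{genL}, and your case split in item (2) and grid count in item (3) fill in exactly those routine details. Two small additions are worth making. For the later application of Lemma~\ref{cover}, note explicitly that $B_L$ and the $N'_L$ copies lie inside $A_L$: the former because the $N_L$ copies sit in $R_{L+1}$, the latter because an open tile disjoint from the other tiles also misses their closures. In item (1), the identity for $\overline{R_{L+2}}$ follows directly from the definition of $A_L$, not from every point lying in the closure of some tile.
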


As in Section~\ref{pfgentile}, Lemma~\ref{3},
Lemma~\ref{cover}, and \eqref{NL'} imply that, for every
$\lambda>0$,
\begin{align*}
	\sN^N_{R_{L+2}}(\lambda)
	\leq{}&
	\left(
	(2(L+2))^{n-1}-(2(L+1))^{n-1}
	\right)
	\sN^N_{\widetilde\Omega}(2\lambda)
	\\
	&+
	\frac{
		\left(
		(2(L+2))^{n-1}-(2L)^{n-1}
		\right)
		|\widetilde\Omega|
	}{
		|\Omega|
	}
	\sN^N_\Omega(2\lambda)
	+
	N_L\sN^N_\Omega(\lambda).
\end{align*}

On the other hand, by Lemma~\ref{lwsquare} and the fact that
cubes in $\R^{n-1}$ satisfy P\'olya's conjecture,
\begin{align*}
	\sN^N_{R_{L+2}}(\lambda)
	&=
	\sum_{i\in\Z_{\geq0},i^2<\lambda}
	\sN^N_{(-(L+2)D,(L+2)D)^{n-1}}
	(\lambda-i^2)
	\\
	&\geq
	L_{0,n-1}2^{n-1}(L+2)^{n-1}D^{n-1}
	\sum_{i\in\Z_{\geq0},i^2<\lambda}
	(\lambda-i^2)^{\frac{n-1}{2}}
	\\
	&\geq
	L_{0,n}\!2^{n-1}\!(L \!+\!2)^{n-1}\!
	D^{n-1}\pi\lambda^{\frac n2}
\!+\!
	\left(1\!-\!\frac\pi4\right)L_{0,n-1}\!2^{n-1}\!(L \! +\!2)^{n-1}\!
	D^{n-1}\lambda^{\frac{n-1}{2}}.
\end{align*}
Consequently,
\begin{align*}
	&
	\left(
	(2(L+2))^{n-1}-(2(L+1))^{n-1}
	\right)
	\sN^N_{\widetilde\Omega}(2\lambda)
	\\
	&\quad+
	\frac{
		\left(
		(2(L+2))^{n-1}-(2L)^{n-1}
		\right)
		D^{n-1}\pi
	}{
		|\Omega|
	}
	\sN^N_\Omega(2\lambda)
	+
	N_L\sN^N_\Omega(\lambda)
	\\
	&\geq
	L_{0,n}2^{n-1}(L+2)^{n-1}
	D^{n-1}\pi\lambda^{\frac n2}
	+
	\left(1-\frac\pi4\right)L_{0,n-1}2^{n-1}(L+2)^{n-1}
	D^{n-1}\lambda^{\frac{n-1}{2}}.
\end{align*}

Dividing both sides by
$
(2LD)^{n-1}
$
and letting $L\to\infty$, the first two terms on the left-hand
side vanish. Moreover, it follows from \eqref{NL} that
\[
\lim_{L\to\infty}
\frac{N_L}
{(2LD)^{n-1}}
=
\frac{\pi}{|\Omega|}.
\]
We therefore obtain, for all $\lambda>0$,
\begin{equation}\label{tile-improved-bound}
	\sN^N_\Omega(\lambda)
	\geq
	L_{0,n}|\Omega|\lambda^{\frac n2}
	+
	\frac{(4-\pi)|\Omega|}{4\pi}
	L_{0,n-1}\lambda^{\frac{n-1}{2}}.
\end{equation}

On the other hand,
\[
\sN^N_\Omega(\lambda)\geq1,
\qquad \lambda>0.
\]
Combining this estimate with \eqref{tile-improved-bound} yields
\[
\begin{aligned}
	\sN^N_\Omega(\lambda)
	\geq{}&
	L_{0,n}|\Omega|\lambda^{\frac n2}
	+
	\max\left(
	1-L_{0,n}|\Omega|\lambda^{\frac n2},
	\frac{(4-\pi)|\Omega|}{4\pi}
	L_{0,n-1}\lambda^{\frac{n-1}{2}}
	\right).
\end{aligned}
\]

Set
\[
\lambda_0
:=
\left(
\frac{1}{2L_{0,n}|\Omega|}
\right)^{\frac2n}.
\]
Then
for $0<\lambda\leq\lambda_0$,  
\[
1-L_{0,n}|\Omega|\lambda^{\frac n2}
\geq
1-L_{0,n}|\Omega|\lambda_0^{\frac n2}
= \frac12, 
\]
while for $\lambda\geq\lambda_0$,  
\begin{align*}
	\frac{(4-\pi)|\Omega|}{4\pi}
	L_{0,n-1}\lambda^{\frac{n-1}{2}}
	\!\geq\!
	\frac{(4-\pi)|\Omega|}{4\pi}
	\! L_{0,n-1}\!\lambda_0^{\frac{n-1}{2}}
	\!=\!
	\frac{(4-\pi)|\Omega|^{\frac1n}}{4\pi}
	\! L_{0,n-1}\!
	\left(
	\frac{1}{2L_{0,n}}
	\right)^{\frac{n-1}{n}}.
\end{align*}
Therefore, for every $\lambda>0$,
\begin{align*}
	&
	\max\left(
	1-L_{0,n}|\Omega|\lambda^{\frac n2},
	\frac{\left(4-\pi\right)|\Omega|}{4\pi}
	L_{0,n-1}\lambda^{\frac{n-1}{2}}
	\right)
	\\
	\geq&
	\min\left(
	\frac12,
	\frac{\left(4-\pi\right)|\Omega|^{\frac1n}}{4	\pi}
	L_{0,n-1}
	\left(
	\frac{1}{2L_{0,n}}
	\right)^{\frac{n-1}{n}}
	\right).
\end{align*}

Recall that
\[
L_{0,n}=\frac{\omega_n}{(2\pi)^n}
=\frac{1}{(4\pi)^{n/2}\Gamma\left(\frac n2+1\right)}.
\]
For $n\ge2$, since $\Gamma\left(\frac n2+1\right)\ge1$, we have
$
L_{0,n}\le 1/4\pi.
$
Consequently,
\[
\left(\frac{1}{2L_{0,n}}\right)^{\frac{n-1}{n}}
\ge (2\pi)^{\frac{n-1}{n}}
\ge \sqrt{2\pi}.
\]
Thus, this quantity admits a positive lower bound independent of $n$.
Define
\[
C_1(\Omega)
:=
\frac{\left(4-\pi\right)|\Omega|}{4\pi}L_{0,n-1}
\]
and
\[
C_2(\Omega)
:=
\min\left(
\frac12,
\frac{\left(4-\pi\right)|\Omega|^{\frac1n}}{2\sqrt\pi}
L_{0,n-1}
\right).
\]
It follows that
\[
\sN^N_\Omega(\lambda)
\geq
L_{0,n}|\Omega|\lambda^{\frac n2}
+
\max\left(
C_1(\Omega)\lambda^{\frac{n-1}{2}},
C_2(\Omega)
\right),
\qquad \lambda>0.
\]

\subsection{Proof of Proposition \ref{prop1} and Proposition \ref{prop2}} 

\begin{proof}[Proof of Proposition \ref{prop1}]
Since $\Omega=\Omega_1\times\Omega_2$, the Neumann eigenvalues of $\Omega$ are
\[
\mu_i(\Omega_1)+\mu_j(\Omega_2),\qquad i,j\in\Z_{\ge0}.
\]
Thus, for any $\lambda>0$, under the assumption that $\Omega_1$ satisfies P\'olya's conjecture, 
\[
\sN^N_{\Omega}(\lambda)=\sum_{j:\mu_j(\Omega_2)<\lambda} \sN^N_{\Omega_1}(\lambda-\mu_j(\Omega_2))\ge L_{0,n_1}|\Omega_1|\sum_{j:\mu_j(\Omega_2)<\lambda}(\lambda-\mu_j(\Omega_2))^{\frac{n_1}2}.
\]
Then, by \eqref{FLRiesz}, as $\lambda\to\infty$,
\[
\begin{aligned}
	\sN^N_{\Omega}(\lambda)
	=&L_{0,n_1}|\Omega_1|\left(L_{\frac{n_1}2,n_2}|\Omega_2|\lambda^{\frac{n}{2}}+\frac14L_{\frac{n_1}2,n_2-1}|\partial\Omega_2|\lambda^{\frac{n-1}{2}}+o(\lambda^{\frac{n-1}{2}})\right)\\
	=&L_{0,n}|\Omega|\lambda^{\frac{n}{2}}+\frac 14L_{0,n-1}|\Omega_1| |\partial\Omega_2|\lambda^{\frac{n-1}{2}}+o(\lambda^{\frac{n-1}{2}}).
\end{aligned}
\]
Hence, there exists $\Lambda(\Omega)>0$ such that for all $\lambda>\Lambda(\Omega)$,
\[
\sN^N_{\Omega}(\lambda)\ge L_{0,n}|\Omega|\lambda^{\frac{n}{2}}+\frac 15 L_{0,n-1} |\Omega_1| |\partial\Omega_2|\lambda^{\frac{n-1}{2}}.
\] 
  
On the other hand, since $\frac{n_1}{2}\ge 1$, by Lemma \ref{rfofGJL} and the Aizenman--Lieb identity, for all $\lambda>0$,
\[
\sN^N_{\Omega}(\lambda)>L_{0,n_1}|\Omega_1|L_{\frac{n_1}2,n_2}|\Omega_2|\lambda^{\frac{n}{2}}=L_{0,n}|\Omega|\lambda^{\frac{n}{2}}.
\]
Combining this with the large $\lambda$ asymptotics and the trivial lower bound $\sN^N_\Omega(\lambda)\ge1$, there exist positive constants $C_1(\Omega),C_2(\Omega)$ such that for all $\lambda>0$,
\[
\sN_\Omega^N(\lambda)\ge L_{0,n}|\Omega|\lambda^{\frac n2}+\max\left(C_1(\Omega)\lambda^{\frac{n-1}2},C_2(\Omega)\right).
\]
This verifies the \ref{ENP} inequality.
\end{proof}

\begin{proof}[Proof of Proposition \ref{prop2}]
The Neumann eigenvalues of $a\Omega_1\times\Omega_2$ are
\[
a^{-2}\mu_i(\Omega_1)+\mu_j(\Omega_2),\qquad i,j\in\Z_{\ge0}.
\]
Hence,
\[
\sN^N_{a\Omega_1\times\Omega_2}(\lambda)
=
\sum_{a^{-2}\mu_i(\Omega_1)<\lambda}
\sN^N_{\Omega_2}(\lambda-a^{-2}\mu_i(\Omega_1)).
\]

By Lemma \ref{Seeasy}, there exists a constant $\alpha>0$ such that for all $\lambda>0$,
\[
\sN^N_{\Omega_2}(\lambda)
\ge
L_{0,n_2}|\Omega_2|\lambda^{\frac{n_2}{2}}
-\alpha \lambda^{\frac{n_2-1}{2}}.
\]
Therefore,
\[
\begin{aligned}
	\sN^N_{a\Omega_1\times\Omega_2}(\lambda)
	\ge&
	\,L_{0,n_2}|\Omega_2|a^{-n_2}
	\sum_{\mu_i(\Omega_1)<a^2\lambda}
	(a^2\lambda-\mu_i(\Omega_1))^{\frac{n_2}{2}}
	\\
	&
	-\alpha a^{1-n_2}
	\sum_{\mu_i(\Omega_1)<a^2\lambda}
	(a^2\lambda-\mu_i(\Omega_1))^{\frac{n_2-1}{2}}.
\end{aligned}
\]

Applying Lemma~\ref{FL3term} and \eqref{FLRiesz}, after choosing $a>0$
sufficiently small, there exist $\Lambda>0$ and $c>0$ such that for all $\lambda>a^{-2}\Lambda$,
\[
\sN^N_{a\Omega_1\times\Omega_2}(\lambda)
\ge
L_{0,n}|a\Omega_1\times\Omega_2|
\lambda^{\frac n2}
+c\lambda^{\frac{n-1}2}.
\]

On the other hand, by \cite[Theorem~1.1]{HW24}, after possibly
decreasing $a$,  
\[
\sN^N_{a\Omega_1\times\Omega_2}(\lambda)
>
L_{0,n}|a\Omega_1\times\Omega_2|
\lambda^{\frac n2}, \qquad \forall \lambda>0.
\]
Combining these estimates with the trivial bound $
\sN^N_{a\Omega_1\times\Omega_2}(\lambda)\ge 1$, 
we conclude that there exist positive constants
$C_1(a\Omega_1\times\Omega_2)$ and
$C_2(a\Omega_1\times\Omega_2)$ such that
\[
\sN^N_{a\Omega_1\times\Omega_2}(\lambda)
\ge
L_{0,n}|a\Omega_1\times\Omega_2|
\lambda^{\frac n2}
+\max\!\left(
C_1(a\Omega_1\times\Omega_2)\lambda^{\frac{n-1}2},
C_2(a\Omega_1\times\Omega_2)
\right).
\]
Hence the \ref{ENP} inequality holds.
\end{proof}

\section{Computations with the unit disk}\label{app}

Let 
\begin{equation}\label{eq:BesselFunctionInt}
	J_n(z) = \frac{1}{\pi}\int_0^\pi \cos\bigl(z\sin\theta-n\theta\bigr)\,d\theta .
\end{equation}
denote the Bessel function of the
first kind of order \(n\), and let
\[
0<j'_{n,1}<j'_{n,2}<\cdots
\]
be the positive zeros of \(J_n'\).  It is well known   that 
\[j'_{0,1}>1, \quad j'_{n,1} >n\;(n \ge 1).\]
By separation of variables and the standard Fourier--Bessel theory
for the disk, see, for example, \cite[Section~2.6]{Zelditch2008}, the Neumann spectrum of \(\D\),
as a multiset, consists of 
\[
\lambda_{0,1}=0,
\qquad
\lambda_{0,k+1}=(j'_{0,k})^2,
\qquad
\lambda_{m,k}=(j'_{m,k})^2\quad(m,k\geq1),
\]	 
and each $\lambda_{m,k}$ with $m \ge 1$ has multiplicity two. Moreover, the associated real-valued orthonormal eigenfunctions can be chosen to be  
\[
\varphi_0=\frac{1}{\sqrt{\pi}}, \qquad  
\varphi_{0,k}(r,\theta) = \frac{J_0(j'_{0,k}r)}
{\sqrt{\pi}\,|J_0(j'_{0,k})|} \quad (k\geq1)
\]
and, for each \(m,k\geq1\), 
\begin{align*}
	\varphi_{m,k}^{c}(r,\theta)
	&=
	C_{m,k}J_m(j'_{m,k}r)\cos(m\theta),\\
	\varphi_{m,k}^{s}(r,\theta)
	&=
	C_{m,k}J_m(j'_{m,k}r)\sin(m\theta),
\end{align*}
where
$C_{m,k}$ is the normalizing constant. 

\subsection{Proof of Proposition \ref{diskconstant}}

\begin{proof}
	For simplicity, denote 
	\[
	a:=\frac5{1008},
	\qquad
	c:=\frac{269}{7840},
	\qquad
	x_*:=\frac{c}{a}=\frac{2421}{350}.
	\]
	We prove
	\begin{equation}
		\label{eq:frequency-target-v3}
		\mathcal N_\D^N(x^2) \geq \frac{x^2}{4}+\max(ax,c),
		\qquad x>0.
	\end{equation}
	
	First suppose \(x>x_*\). According to  \cite[(C.22)]{li2026p},
	for \(x\geq20/3\),
	\begin{equation}
		\label{eq:Li-C22-v3}
		\#\{j:\mu_j(\D)\leq x^2\} 	> 	\frac{x^2}{4}
		+x\left(\frac{13}{40}-\frac1\pi\right)
		-\frac{13}{50\pi x}.
	\end{equation} 	
	Using the   lower bound
	\(\pi>3141592/10^6\), we obtain, for $x>x_*$, 
	\begin{align*}
		\frac{13}{40}-\frac1\pi-\frac{13}{50\pi x^2}
		>
		\frac{13}{40}
		-\left(1+\frac{13}{50x_*^2}\right)\frac{10^6}{3141592}
		=
		\frac{456690232967}{92068139178360}
		>
		a.
	\end{align*}     
	Since \(x_* >20/3\), we thus get
	\[
	\#\{j:\mu_j(\D)\leq x^2\} 	>
	\frac{x^2}{4}+ax, \qquad x\geq x_*.
	\]
	As a result, for any $x>x_*$,
	\[
	\mathcal N_\D^N(x^2) \geq \lim_{\varepsilon \to 0+} 	\#\{j:\mu_j(\D)\leq (x-\varepsilon)^2\} 
	\ge  	\frac{x^2}{4}+ax.
	\] 
	
	Next suppose \(0<x\leq x_*\).  As in the Rayleigh--Ritz construction of
	\cite[Section~4.3]{li2026p}, we set 
	\[
	V_{m,\ell}
	:=
	\operatorname{span}\{r^{p_0},\ldots,r^{p_{\ell-1}}\},
	\qquad
	p_i:=m+2i.
	\]
	In this basis, the quadratic-form and Gram matrices are
	\begin{equation}
		\label{eq:disk-ritz-matrices-v3}
		A_{ij}
		=
		\frac{p_ip_j+m^2}{p_i+p_j},
		\qquad
		B_{ij}
		=
		\frac1{p_i+p_j+2},
		\qquad
		0\leq i,j\leq\ell-1,
	\end{equation}
	with \(A_{00}:=0\) when \(m=0\); cf.~\cite[(4.9)]{li2026p}.
	The matrix \(B\) is positive definite.
	
	Let
	\(\theta^{(m,\ell)}_1\leq\cdots\leq
	\theta^{(m,\ell)}_\ell\)
	be the generalized eigenvalues of \(Av=\theta Bv\).  
	The min--max principle gives
	\begin{equation}
		\label{eq:ritz-minmax-v3}
		\lambda_{m,k}\leq\theta^{(m,\ell)}_k,
		\qquad 1\leq k\leq\ell.
	\end{equation}
	For a real symmetric matrix \(M\), let \(n_-(M)\) denote the
	number of its negative eigenvalues, counted with multiplicity.
	For every \(T\), Sylvester's law of inertia gives
	\begin{equation}
		\label{eq:inertia-counts-v3}
		n_-(A-TB)
		=
		\#\bigl\{k:\theta^{(m,\ell)}_k<T\bigr\}.
	\end{equation}
	
	For \(q\in\{1,3,5,6,8\}\), put
	\[
	T_q:=4(q-c).
	\]
	Thus \(T_q/4+c=q\). Exact rational
	\(LDL^{\mathsf T}\) elimination applied to
	\(A_{m,\ell}-T_qB_{m,\ell}\) gives the following pivots. The last
	column is the number of negative pivots and hence, by
	\eqref{eq:inertia-counts-v3}, the number of Ritz eigenvalues below
	\(T_q\).
	\[
	\begin{array}{c|c|c|c|c}
		(m,\ell)&q&T_q&\text{diagonal pivots}&n_-
		\\ \hline
		(1,2)&1&\dfrac{7571}{1960}
		&\left(\dfrac{269}{7840},
		-\dfrac{95442359}{37961280}\right)&1
		\\[2mm]
		(2,2)&3&\dfrac{23251}{1960}
		&\left(\dfrac{269}{11760},
		-\dfrac{875514519}{84358400}\right)&1
		\\[2mm]
		(0,3)&5&\dfrac{38931}{1960}
		&\left(-\dfrac{38931}{3920},
		\dfrac{2703}{15680},
		-\dfrac{698796893}{1907236800}\right)&2
		\\[2mm]
		(3,2)&6&\dfrac{46771}{1960}
		&\left(\dfrac{269}{15680},
		-\dfrac{3249474359}{158172000}\right)&1
		\\[2mm]
		(4,2)&8&\dfrac{62451}{1960}
		&\left(\dfrac{15949}{19600},
		-\dfrac{866322013}{5251686720}\right)&1
		\\[2mm]
		(1,3)&8&\dfrac{62451}{1960}
		&\left(-\dfrac{54611}{7840},
		\dfrac{902174333}{2568901440},
		-\dfrac{11719249600621}{707304677072000}\right)&2
	\end{array}
	\]
	Combining \eqref{eq:ritz-minmax-v3} and
	\eqref{eq:inertia-counts-v3} therefore gives
	\begin{equation}
		\label{eq:finite-spectral-bounds-v3}
		\begin{aligned}
			(j'_{1,1})^2<T_1,
			\quad  
			(j'_{2,1})^2&<T_3,
			&
			(j'_{0,1})^2&<T_5,
			&
			(j'_{3,1})^2&<T_6,
			\\
			\max\bigl\{(j'_{4,1})^2,(j'_{1,2})^2\bigr\}&<T_8.
		\end{aligned}
	\end{equation}
	Indeed, in the \(m=0\) row the two Ritz values below \(T_5\)
	control \(\lambda_{0,2}=(j'_{0,1})^2\), since
	\(\lambda_{0,1}=0\); the last row similarly controls
	\(\lambda_{1,2}=(j'_{1,2})^2\). 
	Taking  multiplicities into account, the bounds in
	\eqref{eq:finite-spectral-bounds-v3} give
	\[
	\begin{array}{c|c}
		\text{range of \(x^2\)}
		&\text{lower bound for \(\mathcal N_\D^N(x^2)\)}
		\\ \hline
		0<x^2\leq T_1&1\\
		T_1<x^2\leq T_3&3\\
		T_3<x^2\leq T_5&5\\
		T_5<x^2\leq T_6&6\\
		T_6<x^2\leq T_8&8\\
		T_8<x^2\leq x_*^2&12.
	\end{array}
	\]
	In each of the first five rows, the displayed lower count is \(q\),
	and hence
	\[
	\frac{x^2}{4}+c
	\leq
	\frac{T_q}{4}+c=q.
	\]
	For the last row, an exact calculation gives
	\[
	\frac{x^2}{4}+c
	\leq
	\frac{x_*^2}{4}+c
	=
	12-\frac{3893}{980000}
	<12.
	\]
	Consequently,
	\begin{equation}
		\label{eq:low-frequency-v3}
		\mathcal N_\D^N(x^2) \geq \frac{x^2}{4}+c,
		\qquad 0<x\leq x_*.
	\end{equation}
	This finishes the proof of  \eqref{eq:frequency-target-v3} and hence  \eqref{eq:unit-disk-explicit-remainder-v3}.
\end{proof}

\subsection{ $L^\infty$-norms of Neumann eigenfunctions}

Next, we compute the $L^\infty$-norms of the set of Neumann  eigenfunctions of the disk mentioned at the beginning of this section. We need two lemmas. 

\begin{lemma}\label{lem:bessel-critical-value}
	Let $a\ge 2n$ satisfies $J_n'(a)=0$, where for $n=0$ we further assume $a>0$, then
	$
	|J_n(a)|> \frac{1}{2\sqrt a}.
	$
\end{lemma}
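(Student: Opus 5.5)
The approach is a Sonine-type monotonicity argument applied to the Liouville normal form of Bessel's equation, rather than to $J_n$ itself. Put $w(x):=\sqrt{x}\,J_n(x)$. Bessel's equation then becomes
\[
w''+Q(x)\,w=0,\qquad Q(x):=1-\frac{n^2-\frac14}{x^2}.
\]
On the range $x\ge 2n$ (and $x>0$ when $n=0$), $Q$ is positive and nondecreasing. We introduce the Sonine function
\[
S(x):=w(x)^2+\frac{w'(x)^2}{Q(x)}.
\]
Differentiating and using $w''=-Qw$ gives
\[
S'(x)=2ww'+\frac{2w'w''}{Q}-\frac{Q'w'^2}{Q^2}=-\frac{Q'(x)}{Q(x)^2}\,w'(x)^2\le 0.
\]
So $S$ is nonincreasing wherever $Q>0$ and $Q'\ge0$. (When $n=0$ we have $Q'<0$, so $S$ is in fact nondecreasing there. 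This direction is handled separately below.)

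\textbf{Limit of $S$.} From \eqref{eq:BesselFunctionInt}, or from the standard large-argument asymptotics, we have $w(x)=\sqrt{2/\pi}\cos(x-\tfrac{n\pi}2-\tfrac\pi4)+O(x^{-1})$ and $w'(x)=-\sqrt{2/\pi}\sin(\cdots)+O(x^{-1})$, while $Q\to1$. Hence $S(x)\to 2/\pi$.

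\textbf{Case $n\ge1$.} Since $S$ is nonincreasing, $S(x)\ge 2/\pi$ for all $x\ge 2n$.

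\textbf{Case $n=0$.} Here $S$ is increasing, so the limit argument gives the wrong direction. For $n=0$ I would instead use the known fact that the local maxima of $\sqrt{x}\,|J_0(x)|$ increase toward $\sqrt{2/\pi}$. Equivalently, I would apply the same computation to $\widetilde S:=Q\,w^2+w'^2$, for which $\widetilde S'=Q'w^2\le0$ when $n=0$, together with $\widetilde S\to 2/\pi$. Either version gives $S(a)\ge 2/\pi$ or $\widetilde S(a)\ge 2/\pi$ at every point $a$ of the allowed range.

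\textbf{Evaluation at a critical point.} Let $a$ satisfy $J_n'(a)=0$. Then
\[
w'(a)=\frac{J_n(a)}{2\sqrt a}+\sqrt a\,J_n'(a)=\frac{J_n(a)}{2\sqrt a}.
\]
For $n\ge1$ this gives
\[
J_n(a)^2\Bigl(a+\frac{1}{4aQ(a)}\Bigr)=S(a)\ge\frac2\pi .
\]
Since $a\ge 2n$, we get $Q(a)\ge 1-\frac{n^2}{4n^2}=\frac34$, and therefore
\[
J_n(a)^2\ge \frac{2/\pi}{a+\frac1{3a}}.
\]
This exceeds $\frac1{4a}$ exactly when $\frac{8}{\pi}a^2>a^2+\frac13$, which holds since $a\ge2$.

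For $n=0$ we have $Q(a)=1+\frac1{4a^2}$ and $a\ge j'_{0,1}>3$. The inequality $\widetilde S(a)\ge 2/\pi$ then reads $J_0(a)^2\bigl(aQ(a)+\frac1{4a}\bigr)\ge\frac2\pi$, which similarly gives $J_0(a)^2>\frac1{4a}$ with a wide margin. The strict inequality survives the constant slack in each case.

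\textbf{Main obstacle.} The hardest step is getting the monotonicity direction right in each regime, and justifying the limit value $2/\pi$ rigorously from \eqref{eq:BesselFunctionInt}. I would cite the standard asymptotic expansion of $J_n$ and $J_n'$ rather than rederive it. I would also double-check that $Q>0$ throughout $[2n,\infty)$, so that $S$ is well defined on the whole interval over which the monotonicity is integrated.
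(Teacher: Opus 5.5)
Your argument is correct. For $n\ge1$ it is the paper's proof: the same normal form $w=\sqrt{x}\,J_n$, the same Sonine function $S=w^2+w'^2/Q$ with $S'=-Q'w'^2/Q^2\le0$, the limit $2/\pi$, and evaluation at the critical point via $w'(a)=w(a)/(2a)$.

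The only difference for $n\ge1$ is the final arithmetic. The paper uses $S(a)=aJ_n(a)^2\bigl(1+\frac{1}{4(a^2-n^2+1/4)}\bigr)<2aJ_n(a)^2$ to get $|J_n(a)|>1/\sqrt{\pi a}$. You use $Q(a)\ge\frac34$ to get $J_n(a)^2\ge\frac{2}{\pi}\bigl(a+\frac{1}{3a}\bigr)^{-1}$. Both give the strict inequality with room to spare.

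The real difference is the case $n=0$, and there your version is the one that works. The paper asserts $q_n'>0$ on $[2n,\infty)$ for every $n$ and runs the same $S$. But $q_0'(x)=-\frac{1}{2x^3}<0$, so $S_0$ is nondecreasing, and the argument as written only gives $S_0(a)\le 2/\pi$, which is the wrong direction.

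Your switch to $\widetilde S=Qw^2+w'^2$ repairs this. It satisfies $\widetilde S'=Q'w^2\le0$ and $\widetilde S\to 2/\pi$, and at the critical point $\widetilde S(a)=J_0(a)^2\bigl(a+\frac{1}{2a}\bigr)$. This case matters: it is the one needed for the radial eigenfunctions $\varphi_{0,k}$ in Proposition~\ref{thm:neumann-disk-supnorm}.

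One small correction: your aside about the local maxima of $\sqrt{x}\,|J_0(x)|$ is not equivalent to the $\widetilde S$ argument. Those maxima occur where $w'=0$, not where $J_0'=0$, so they do not by themselves give a lower bound for $|w(a)|$. Drop that aside and rely on $\widetilde S$.
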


\begin{proof}
	Set
	\[
	u_n(x)=\sqrt{x}\,J_n(x),
	\qquad
	q_n(x)=1-\frac{n^2-\frac14}{x^2}.
	\]
	Then $u_n$ satisfies $u_n''+q_n(x)u_n=0$.  
	For $x\ge 2n$, we have $q_n(x)>0$ and $q_n'(x)>0$. Hence the Sonin function
	\[
	S_n(x):=u_n(x)^2+\frac{u_n'(x)^2}{q_n(x)}
	\]
	satisfies
	\[
	S_n'(x) = -\frac{q_n'(x)}{q_n(x)^2}u_n'(x)^2 \le 0.
	\]
	Moreover, by the standard asymptotics of $J_n$, 
	\[
	\lim_{x\to\infty}S_n(x)=\frac{2}{\pi}.
	\]
	Therefore $
	S_n(a)>\frac{2}{\pi}.$
	On the other hand,$J_n'(a)=0$ implies  $u_n'(a)=\frac{u_n(a)}{2a}$ and thus
	\[
	S_n(a) = aJ_n(a)^2 \left( 1 + \frac{1}{4\left(a^2-n^2+\frac14\right)} \right) < 2aJ_n(a)^2.
	\]
	So we get $
	2aJ_n(a)^2 >\frac{2}{\pi}$
	and consequently 
	\[
	|J_n(a)|>\frac 1{\sqrt{\pi a}}>\frac{1}{2\sqrt a}.
	\]
\end{proof}

\begin{lemma}\label{lem:bessel-near-turning}
	Let \(n\geq1\), let \(a\) be a positive zero of \(J_n'\), and set
	\[
	M_n(a)=\max_{0\leq x\leq a}|J_n(x)|.
	\]
	Then
	\[
	\int_0^a xJ_n(x)^2\,dx
	\geq \frac{n}{4}M_n(a)^2.
	\]
\end{lemma}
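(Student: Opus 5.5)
The plan is to bound the weighted integral from below on a short interval around the point where $|J_n|$ attains its maximum $M:=M_n(a)$ on $[0,a]$. The key inputs are the classical facts that $J_n\ge0$ and is increasing on $[0,j'_{n,1}]$, and that $j'_{n,1}>n$ (recalled above). The natural comparison point is the turning point $x=n$ of the Bessel equation.

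First I will locate the maximizer and show it lies beyond $n$. Let $x_0\in[0,a]$ with $|J_n(x_0)|=M$. Since $J_n(0)=0$ for $n\ge1$, either $x_0=a$ or $J_n'(x_0)=0$. In both cases $x_0$ is a critical point of $J_n$, so $x_0\ge j'_{n,1}>n$. I will also record the standard energy fact: on $[0,n]$ we have $q(x)=1-n^2/x^2\le0$, so $J_n$ has no interior extremum there and is monotone increasing from $0$.

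Next I will bound the integral below by a Sonin-type argument. Put $E(x)=J_n(x)^2+\frac{x^2}{x^2-n^2}J_n'(x)^2$ for $x>n$. From the Bessel equation $J_n''=-\frac1xJ_n'-(1-\frac{n^2}{x^2})J_n$ one computes that $E'(x)\le0$ for $x>n$. Since $J_n'(x_0)=0$, this gives $E(x)\ge E(x_0)=M^2$ for $x\in(n,x_0]$. Hence on $(n,x_0]$, at each point either $J_n^2\ge M^2/2$ or $J_n'^2\ge \frac{x^2-n^2}{2x^2}M^2$. This alternative is weaker than I would like, so I will pair it with the simpler route below.

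The simpler route is monotonicity on $[0,j'_{n,1}]$. For $x\in[x_1,j'_{n,1}]$ we have $J_n(x)\ge J_n(x_1)$, and I will use the known lower bound $J_n(n)\gtrsim n^{-1/3}$ against $M\lesssim n^{-1/3}$. The estimate I am aiming for is
\[
\int_0^a xJ_n^2\,dx\ \ge\ \int_{n}^{j'_{n,1}} xJ_n^2\,dx\ \ge\ n\,(j'_{n,1}-n)\,J_n(n)^2 .
\]
Combining this with $j'_{n,1}-n\ge c\,n^{1/3}$ and $J_n(n)\ge c' n^{-1/3}$ gives order $n^{1/3}$, which beats $\frac n4M^2\sim n^{1/3}$ only if the constants work out. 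This is a risky step.

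A cleaner alternative avoids explicit Airy constants by using the identity
\[
\int_0^a xJ_n^2\,dx=\tfrac12\left[(a^2-n^2)J_n(a)^2+a^2J_n'(a)^2\right]=\tfrac12(a^2-n^2)J_n(a)^2 ,
\]
valid because $J_n'(a)=0$. More generally, $F(x):=\int_0^x tJ_n^2\,dt-\tfrac12[(x^2-n^2)J_n^2+x^2J_n'^2]$ is constant and equal to $0$. At $x=x_0$ this gives
\[
\int_0^a xJ_n^2\,dx\ \ge\ \int_0^{x_0} xJ_n^2\,dx=\tfrac12(x_0^2-n^2)M^2 .
\]
It then remains to show $x_0^2-n^2\ge n/2$, i.e.\ $x_0^2\ge n^2+n/2$. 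Since $x_0\ge j'_{n,1}$, it suffices to show $(j'_{n,1})^2\ge n^2+n/2$. I will take this from the classical bound $j'_{n,1}\ge\sqrt{n(n+2)}$, which gives $(j'_{n,1})^2-n^2\ge 2n\ge n/2$.

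The main obstacle is justifying this lower bound on $j'_{n,1}$. It follows from the Rayleigh quotient for the radial Neumann problem, where the first nonzero eigenvalue of the mode-$n$ radial operator on the unit disk is at least $n(n+2)$. Failing that, I would use the cruder but known $j'_{n,1}>\sqrt{n(n+1)}$, which still gives $x_0^2-n^2\ge n\ge n/2$ and closes the argument.
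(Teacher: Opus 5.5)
Your final argument is correct, and it takes a genuinely different route from the paper's.

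\textbf{The paper's route.} It uses the Sonin function to show $M_n(a)=J_n(j'_{n,1})=\sup_{\R}|J_n|$. It then applies Bernstein's inequality ($J_n$ is entire of exponential type $1$) to get $\|J_n'\|_{L^\infty(\R)}\le M_n(a)$. This gives the linear lower bound $J_n(x)\ge(1-j'_{n,1}+x)M_n(a)$ on $[j'_{n,1}-1,j'_{n,1}]$. Integrating over this unit interval gives $\frac{4j'_{n,1}-1}{12}M_n(a)^2\ge\frac n4M_n(a)^2$, using only $j'_{n,1}>n$.

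\textbf{Your route.} You evaluate the exact Lommel identity $\int_0^x tJ_n(t)^2\,dt=\frac12\bigl[(x^2-n^2)J_n(x)^2+x^2J_n'(x)^2\bigr]$ at a maximizer $x_0$ of $|J_n|$ on $[0,a]$. You correctly note that $x_0$ is a positive critical point (interior, or $x_0=a$), so $\int_0^a xJ_n^2\,dx\ge\frac12(x_0^2-n^2)M_n(a)^2$ with $x_0\ge j'_{n,1}$.

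\textbf{Comparison.} Your route avoids Bernstein's inequality and any need to locate the maximum. Combined with $j'_{n,1}>\sqrt{n(n+2)}$, it yields $nM_n(a)^2$, a constant four times better than the lemma's. The price is that you need the quantitative bound $(j'_{n,1})^2-n^2\ge n/2$, which the qualitative fact $j'_{n,1}>n$ used in the paper does not supply.

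\textbf{One justification to fix.} Your appeal to ``the Rayleigh quotient for the radial Neumann problem'' is only a restatement. That eigenvalue is $(j'_{n,1})^2$ itself, and the Rayleigh quotient alone gives just $\ge n^2$. You should either cite the classical inequality $j'_{\nu,1}>\sqrt{\nu(\nu+2)}$ from Watson's treatise, or prove it as follows.
\begin{itemize}
\item Set $\phi=xJ_n'/J_n$. Then $x\phi'=n^2-x^2-\phi^2$ and $\phi(0^+)=n$.
\item With $t=x^2$ and $\psi=n-\phi$, one has $2t\frac{d\psi}{dt}=t-2n\psi+\psi^2\le t-n\psi$ while $0\le\psi\le n$.
\item Multiplying by $t^{n/2-1}/2$ and integrating from $0$ gives $\psi\le t/(n+2)<n$ for $t<n(n+2)$.
\item Hence $J_n'>0$ on $(0,\sqrt{n(n+2)})$.
\end{itemize}

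The Sonin-function and Airy-scale detours at the start of your proposal are not used and should be dropped.
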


\begin{proof}
	%
	For \(x>n\), put
	\[
	p_n(x)=1-\frac{n^2}{x^2},
	\qquad
	S_n(x)=J_n(x)^2+\frac{J_n'(x)^2}{p_n(x)}.
	\]
	Using the Bessel equation, one obtains for \(x>n\),
	\[
	S_n'(x)
	=
	-\frac{2J_n'(x)^2}{x\,p_n(x)}
	-\frac{p_n'(x)J_n'(x)^2}{p_n(x)^2}
	\leq0.
	\]
	Thus the absolute values of the successive extrema of \(J_n\) are
	nonincreasing.  Since \(j'_{n,1}\) is the first positive critical point and $|J_n|$ is even,
	\[
	M_n(a)=J_n(j'_{n,1})=\sup_{x\in\R}|J_n(x)|.
	\]
	
	By \eqref{eq:BesselFunctionInt}, $J_n$ is entire. Moreover, it is  of exponential type at most $1$ since 
	\[
	|J_n(z)|
	\leq
	\frac{1}{\pi}\int_0^\pi e^{|y||\sin\theta|}\,d\theta
	\leq e^{|y|}
	\leq e^{|z|}.
	\]
	Hence by Bernstein's inequality
	\cite{Bernstein1923}, 
	\[
	\|J_n'\|_{L^\infty(\mathbb R)}
	\leq
	\|J_n\|_{L^\infty(\mathbb R)}
	=M_n(a).
	\]
	As a consequence, for $x\in[j'_{n,1}-1,j'_{n,1}]$,  
	\[
	J_n(x)
	\geq J_n(j'_{n,1})-(j'_{n,1}-x)M_n(a)
	=(1-j'_{n,1}+x)M_n(a).
	\]
	Since \(j'_{n,1}\geq n\geq1\) and \(a\geq j'_{n,1}\), it follows that
	\begin{align*}
		\int_0^a\! xJ_n(x)^2\,dx
		\geq
		M_n(a)^2\int_{j'_{n,1}\!-\!1}^{j'_{n,1}}\! x(1\!-\!j'_{n,1}\!+\!x)^2\,dx=
		\frac{4j'_{n,1}\!-\!1}{12}M_n(a)^2\geq\frac{n}{4}M_n(a)^2.\!
	\end{align*}
\end{proof}

Sharp eigenfunction and spectral-cluster estimates on manifolds with
boundary were established in
\cite{Grieser2002,Sogge2002,SmithSogge2007}. For the unit disk, related
growth estimates were studied in
\cite{LavoiePoliquin2024,Cheng2026}; here we obtain the following
explicit bound.

Consider the special real-valued orthonormal basis of \(L^2(\mathbb D)\),
\[
\mathcal B
=
\{\varphi_0\}
\cup
\{\varphi_{0,k}:k\geq1\}
\cup
\{\varphi_{n,k}^{c},\varphi_{n,k}^{s}:n,k\geq1\}.
\]

\begin{proposition}\label{thm:neumann-disk-supnorm}
	Every \(\varphi\in\mathcal B \setminus \{\varphi_0\}\)   satisfies
	\[
	\|\varphi\|_{L^\infty(\D)}
	\leq 2\,\lambda^{\frac 14}.
	\]
\end{proposition}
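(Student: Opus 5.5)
The plan is to treat the radial family $\varphi_{0,k}$ and the angular families $\varphi^{c}_{m,k},\varphi^{s}_{m,k}$ separately. Throughout, $a:=j'_{m,k}$, so the eigenvalue is $\lambda=a^2$ and the target bound reads $\|\varphi\|_{L^\infty(\D)}\le 2\sqrt a$. In every case the $L^\infty$ norm is the normalizing constant times $\max_{0\le x\le a}|J_m(x)|$, because $|\cos(m\theta)|,|\sin(m\theta)|\le 1$ and $r\mapsto ar$ maps $[0,1]$ onto $[0,a]$.

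\emph{Case $m=0$.} Here $\|\varphi_{0,k}\|_{L^\infty}=\dfrac{\max_{[0,a]}|J_0|}{\sqrt\pi\,|J_0(a)|}$, and $\max|J_0|=J_0(0)=1$ (indeed $|J_n|\le1$ for all real arguments, directly from \eqref{eq:BesselFunctionInt}). Lemma~\ref{lem:bessel-critical-value} with $n=0$ and $a>0$ gives $|J_0(a)|>1/(2\sqrt a)$. Hence $\|\varphi_{0,k}\|_\infty<2\sqrt a/\sqrt\pi<2\sqrt a$.

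\emph{Case $m\ge1$: normalization.} Set $I:=\int_0^a xJ_m(x)^2\,dx$ and $M:=M_m(a)$. The substitution $x=ar$ gives
\[
1=C_{m,k}^2\,\pi\int_0^1 J_m(ar)^2 r\,dr=C_{m,k}^2\,\pi a^{-2} I,
\qquad\text{so}\qquad
\|\varphi^{c,s}_{m,k}\|_\infty^2=C_{m,k}^2M^2=\frac{a^2M^2}{\pi I}.
\]
The question is then how to bound $I$ from below; I split according to the size of $a$ relative to $m$.

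If $a\le \pi m$, use Lemma~\ref{lem:bessel-near-turning}, which gives $I\ge \tfrac m4 M^2$. Then $\|\varphi\|_\infty^2\le \dfrac{4a^2}{\pi m}\le 4a$.

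If $a>\pi m$, then in particular $a\ge 2m$. I would use the classical Lommel identity
\[
\int_0^a xJ_m(x)^2\,dx=\tfrac12\bigl(a^2J_m'(a)^2+(a^2-m^2)J_m(a)^2\bigr).
\]
This can be derived by multiplying the Bessel equation by $x^2J_m'$ and integrating; it is the one ingredient not proved earlier in the paper. Since $J_m'(a)=0$, it reduces to $I=\tfrac12(a^2-m^2)J_m(a)^2$. Lemma~\ref{lem:bessel-critical-value} then gives $I\ge (a^2-m^2)/(8a)$, and $M\le 1$. Therefore
\[
\|\varphi\|_\infty^2\le\frac{8a^3}{\pi(a^2-m^2)}\le \frac{8a}{\pi(1-\pi^{-2})}<4a,
\]
using $a^2-m^2\ge a^2(1-\pi^{-2})$ and $8/(\pi-\pi^{-1})\approx2.83$.

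The main obstacle is choosing the splitting threshold so that both regimes close with constant $2$. The turning-point estimate in Lemma~\ref{lem:bessel-near-turning} only yields $a^2/m$, which is useless for high radial modes. The critical-value lower bound from Lemma~\ref{lem:bessel-critical-value} degenerates as $a\downarrow m$ and is only available for $a\ge 2m$. The threshold $a=\pi m$ sits inside the range $a\ge 2m$ where the second lemma applies and is exactly where the first bound becomes $4a$. The remaining points to verify are the Lommel identity and the numerical margin $8/(\pi-\pi^{-1})<4$.
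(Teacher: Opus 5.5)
Your proof is correct and is essentially the paper's argument. It uses the same separation of the radial case from $m\ge1$: Lemma~\ref{lem:bessel-near-turning} handles the low radial modes, and Lemma~\ref{lem:bessel-critical-value} together with the Lommel normalization $\int_0^a xJ_m(x)^2\,dx=\tfrac12(a^2-m^2)J_m(a)^2$ handles the high ones (the paper uses this normalization implicitly when it writes out $C_{n,k}$). The only difference is your cutoff $a=\pi m$ in place of the paper's $a=2m$, which closes just as well since $8/(\pi-\pi^{-1})<4$. As a side remark, both arguments invoke Lemma~\ref{lem:bessel-critical-value} at $n=0$, where its Sonin-function proof does not apply as written: there $q_0'<0$, so $S_0$ increases toward $2/\pi$. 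The bound $|J_0(a)|>1/(2\sqrt a)$ itself still holds, since $S_0$ is nondecreasing and its value at the first critical point $a\approx 3.83$ is about $0.63$, so $aJ_0(a)^2>1/4$ at every critical point.
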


\begin{proof}
	By \eqref{eq:BesselFunctionInt},   $|J_n(x)|\leq 1$ for all $n$ and all $x\in\mathbb R$. 
	So if $n=0$, Lemma~\ref{lem:bessel-critical-value} gives  
	\[
	\|\varphi_{0,k}\|_{L^\infty(\mathbb D)}
	\le  \frac{1}{\sqrt\pi\,|J_0(j'_{0,k})|}
	\leq \frac{2}{\sqrt\pi}\sqrt{j'_{0,k}}
	<2\lambda_{0,k+1}^{\frac 14}.
	\]
	Now fix $n,k\geq1$ 
	and write
	\[
	M=M_n(j'_{n,k})=\max_{0\leq x\leq j'_{n,k}}|J_n(x)|.
	\]
	Suppose first that $j'_{n,k}<2n$.  Apply 
	Lemma~\ref{lem:bessel-near-turning} to $a=j'_{n,k}$, 
	\[
	\int_0^1J_n(j'_{n,k}r)^2r\,dr = \frac{1}{{j'_{n,k}}^2}\int_0^{j'_{n,k}} xJ_n(x)^2\,dx \geq \frac{nM^2}{4{j'_{n,k}}^2}.
	\]
	Since the angular factors have \(L^2\)-norm squared \(\pi\) and
	\(L^\infty\)-norm \(1\), we obtain
	\begin{align*}
		\|\varphi_{n,k}^{c,s}\|_{L^\infty(\mathbb D)} 
		\leq
		\frac{M}
		{\left(
			\pi\int_0^1J_n(j'_{n,k}r)^2r\,dr
			\right)^{1/2}} \leq
		\frac{2j'_{n,k}}{\sqrt{\pi n}}
		<
		2\lambda_{n,k}^{\frac14},
	\end{align*}
	where the last inequality follows from $j'_{n,k}<2n<\pi n$.
	
	Finally, if $j'_{n,k}\geq2n$, then 
	$1-n^2/{j'_{n,k}}^2\geq3/4$.  Lemma~\ref{lem:bessel-critical-value} and  $|J_n(x)|\leq 1$ imply
	\begin{align*}
		\|\varphi_{n,k}^{c,s}\|_{L^\infty(\mathbb D)} 
		\le C_{n,k} = 
		\left(\frac{2}{\pi}\right)^{1/2}
		\frac{1}
		{\sqrt{1-n^2/{j'_{n,k}}^2}\,|J_n(j'_{n,k})|} \leq
		4\sqrt{\frac{2}{3\pi}}\sqrt{j'_{n,k}}< 2\lambda_{n,k}^{\frac 14}.
	\end{align*}
	This completes the proof.
\end{proof}
In particular, applying Propositions \ref{diskconstant} and \ref{thm:neumann-disk-supnorm} to the proof of Theorem \ref{mthm} yields  the following corollary.
\begin{corollary}\label{cor:coppercoin}
	For every \(0<a<9\times10^{-8}\), the Neumann eigenvalues of 
	\[
	\mathbb D\setminus\overline{(0,a)\times(0,a)}
	\]
	satisfy P\'olya's conjecture.
\end{corollary}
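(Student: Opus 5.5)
The corollary follows by running the proof of Theorem \ref{mthm} with $\Omega=\D$, $M=1$, and a single square $Q=(0,a)\times(0,a)$. Since $|Q|$ is tiny, $\overline Q\subset\D$. We take the explicit constants $C_1(\D)=\frac{5}{1008}$ and $C_2(\D)=\frac{269}{7840}$ from Proposition \ref{diskconstant}; there $\lambda$ stands for $x^2$ and $L_{0,2}|\D|=\frac14$.

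\emph{Large $\lambda$.} Condition \eqref{1} with $n=2$ reads $a\le C_1\pi/8$, which holds easily. The threshold \eqref{2} becomes
\[
\Lambda_2=\Bigl(\tfrac{2}{C_1}\Bigr)^2=\Bigl(\tfrac{2016}{5}\Bigr)^2\approx 1.63\times10^5 .
\]
For $\lambda>\Lambda_2$ the large-$\lambda$ half of the argument then gives $\sN^N_{\D'}(\lambda)\ge\frac14|\D'|\lambda/\pi\cdot\pi$, i.e.\ P\'olya's inequality, directly.

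\emph{Small $\lambda$: counting and sup-norm bounds.} First we bound $N_2=\sN^N_\D(\Lambda_2)$ from above. A crude route is Lemma \ref{cover}: cover $\D$ by the square $(-1,1)^2$, or use the explicit disk Weyl bound. A cleaner option is domain comparison via Lemma \ref{upbdIn} after a scaling, giving $N_2\lesssim \Lambda_2/4+4\sqrt{\Lambda_2}+\dots\approx 4.2\times10^4$; any explicit bound of this size suffices. Next, Proposition \ref{thm:neumann-disk-supnorm} gives $\Phi\le 2\Lambda_2^{1/4}\approx 40$ for eigenfunctions below $\Lambda_2$, and $\varphi_0=\pi^{-1/2}$ is smaller.

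\emph{Small $\lambda$: the comparison.} Lemma \ref{hssta} then yields $\mu_i(\D')\le\mu_i(\D)/(1-\epsilon)$ for $i<N_2$, provided $a^2\le \epsilon/(\Phi^2N_2)$. As in \S\ref{pfgeneral}, this gives for $\lambda\le\Lambda_2$
\[
\sN^N_{\D'}(\lambda)\ge\tfrac{(1-\epsilon)\lambda}{4}+\max\bigl(C_1((1-\epsilon)\lambda)^{1/2},C_2\bigr).
\]
We need this to be at least $\frac{\lambda}{4}\cdot\frac{|\D'|}{\pi}$, and for that it suffices that $\epsilon\lambda/4\le \max(\cdot)$.

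\emph{Choosing $\epsilon$.} Rather than the crude choice used in \S\ref{pfthm1.1}, it is better to use the $\sqrt\lambda$ term, since it grows with $\lambda$. Taking
\[
\epsilon\approx \frac{2C_1}{\sqrt{\Lambda_2}},
\]
the condition $\epsilon\lambda/4\le C_1\sqrt{(1-\epsilon)\lambda}$ holds on $(0,\Lambda_2]$. This choice gives $\epsilon\sim 2.5\times10^{-5}$ and hence
\[
a^2\lesssim \frac{2.5\times10^{-5}}{1600\cdot 4.2\times10^4}\sim 4\times10^{-13},
\]
i.e.\ $a\lesssim 6\times10^{-7}$. This is comfortably consistent with the stated $9\times10^{-8}$, which presumably came from cruder bookkeeping, for instance using the min with $C_2$ or a larger bound on $N_2$.

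The main obstacle is this bookkeeping. We need an explicit upper bound on $N_2$, and we must verify that the chosen $\epsilon$ handles the whole range $\lambda\le\Lambda_2$, including small $\lambda$, where the $C_2$ term is what is used. If necessary, we will fall back to the paper's choice $\epsilon=\min\bigl(2C_2/(L_{0,2}|\D|\Lambda_2),\tfrac12\bigr)$, check numerically that the resulting bound on $a$ exceeds $9\times10^{-8}$, and adjust constants (e.g.\ $\Phi^2\le 4\sqrt{\Lambda_2}$) accordingly.
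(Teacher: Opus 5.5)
Your framework is the paper's: insert $C_1=\frac{5}{1008}$ and $C_2=\frac{269}{7840}$ from Proposition~\ref{diskconstant}, and $\Phi^2\le 4\sqrt{\Lambda_2}=1612.8$ from Proposition~\ref{thm:neumann-disk-supnorm}, into the proof of Theorem~\ref{mthm} with $M=1$. Your large-$\lambda$ step, the value $\Lambda_2=(2016/5)^2$, and your use of Lemma~\ref{hssta} are correct. The display $\frac14|\D'|\lambda/\pi\cdot\pi$ should read $\frac{|\D'|}{4\pi}\lambda$. Your choice $\epsilon=2C_1/\sqrt{\Lambda_2}=C_1^2\approx 2.46\times10^{-5}$ is valid and improves on the general proof's $C_2$-based choice $\epsilon=4C_2/\Lambda_2\approx 8.4\times10^{-7}$. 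Proposition~\ref{diskconstant} gives the $C_1\sqrt{\mu}$ term for \emph{every} $\mu>0$, and for $\lambda\le\Lambda_2=4/C_1^2$ one has $\frac{\epsilon\lambda}{4}\le\frac{C_1}{2}\sqrt\lambda\le C_1\sqrt{(1-\epsilon)\lambda}$. So no separate small-$\lambda$ check is needed; this mirrors the paper's cuboid choice in \S\ref{ECfC}. It gains roughly a factor $6$ in the admissible side length.

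The gap is the upper bound on $N_2=\sN^N_\D(\Lambda_2)$. The explicit threshold depends on it, and none of your suggested routes works as stated:
\begin{itemize}
\item Neumann counting functions are not monotone under domain inclusion, so $\D\subset(-1,1)^2$ gives no comparison.
\item Lemma~\ref{cover} needs open pieces whose closures make up $\overline{\D}$, since it restricts $H^1(\D)$ functions to them; a larger square does not qualify.
\item Your figure $\Lambda_2/4+4\sqrt{\Lambda_2}$ is not what the square of side $2$ would give anyway; its Weyl term is $\lambda/\pi$.
\end{itemize}
A valid bound comes from the Fourier--Bessel description and interlacing: $j'_{0,k}=j_{1,k}>j_{0,k}$, and for $m\ge1$, $m<j'_{m,1}$ and $j'_{m,k+1}>j_{m,k}$. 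Using the Dirichlet P\'olya inequality for the disk \cite{FLPS23}, this gives
\[
\sN^N_\D(\lambda)\le 1+\sN^D_\D(\lambda)+2\#\{m\ge1:\ m<\sqrt\lambda\}\le 1+\frac{\lambda}{4}+2\sqrt\lambda .
\]
Hence $N_2\le 41450$ and $a^2\le C_1^2/(1612.8\cdot 41450)\approx 3.7\times10^{-13}$, i.e.\ $a\lesssim 6\times10^{-7}$, as you predicted. With your better $\epsilon$, any valid bound $N_2\le 1.8\times10^6$ would already suffice for $a<9\times10^{-8}$.
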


\bibliographystyle{amsalpha}

\begin{thebibliography}{FLPS26b}
	
	\bibitem[Ber23]{Bernstein1923}
	Serge Bernstein, \emph{Sur une propri{\'e}t{\'e} des fonctions enti{\`e}res},
	Comptes rendus hebdomadaires des s{\'e}ances de l'Acad{\'e}mie des sciences
	\textbf{176} (1923), 1603--1605.
	
	\bibitem[Bli14]{Bli14}Hans F. Blichfeldt, \emph{A new principle in the geometry of numbers, with some applications}, Trans. Amer. Math. Soc. \textbf{15} (1914), no.~3, 227--235.
	
	\bibitem[Che26]{Cheng2026}
	Haoyu Cheng, \emph{Exact {$L^p$} growth rates of Laplace eigenfunctions on the
		unit disk}, arXiv preprint arXiv:2606.02428 (2026).
	
	\bibitem[Fil25]{Fil25}
	Nikolay Filonov, \emph{On the P{\'o}lya Conjecture for the Neumann problem in
		planar convex domains}, Communications on Pure and Applied Mathematics
	\textbf{78} (2025), no.~3, 537--544.
	
	\bibitem[FL25]{FL25}
	Rupert~L Frank and Simon Larson, \emph{Riesz means asymptotics for Dirichlet
		and Neumann Laplacians on Lipschitz domains}, Inventiones mathematicae
	\textbf{241} (2025), no.~3, 999--1079.
	
	\bibitem[FL26]{FL26CPAM}
	\bysame, \emph{Semiclassical inequalities for Dirichlet and Neumann Laplacians
		on convex domains}, Communications on Pure and Applied Mathematics
	\textbf{79} (2026), no.~3, 762--822.
	
	\bibitem[FLO25]{FLO25}
	Veronica Felli, Lorenzo Liverani, and Roberto Ognibene, \emph{Quantitative
		spectral stability for the Neumann Laplacian in domains with small holes},
	Journal of Functional Analysis \textbf{288} (2025), no.~6, 110817.
	
	\bibitem[FLP21]{FLP21}
	Pedro Freitas, Jean Lagac{\'e}, and Jordan Payette, \emph{Optimal unions of
		scaled copies of domains and P{\'o}lya's Conjecture}, Arkiv f{\"o}r Matematik
	\textbf{59} (2021), no.~1, 11--51.
	
	\bibitem[Fil20]{Filonov2020}
	Nikolay Filonov, \emph{On the P\'olya conjecture for the Neumann problem in tiling sets}, arXiv:2006.10663 (2020).
	
	\bibitem[FLPS23]{FLPS23}
	Nikolay Filonov, Michael Levitin, Iosif Polterovich, and David~A Sher,
	\emph{P{\'o}lya’s Conjecture for Euclidean balls}, Inventiones mathematicae
	\textbf{234} (2023), no.~1, 129--169.
	
	\bibitem[FLPS26a]{FLPS26}
	\bysame, \emph{P{\'o}lya's Conjecture for Dirichlet eigenvalues of annuli},
	Journal of the London Mathematical Society \textbf{113} (2026), no.~2,
	e70425.
	
	\bibitem[FLPS26b]{FLPS26disk}
	\bysame, \emph{P\'olya's Conjecture for higher-dimensional Neumann balls},
	arXiv preprint arXiv:2607.29305, 2026.
	
	\bibitem[Fre19]{FP19}
	Pedro Freitas, \emph{A remark on P{\'o}lya’s Conjecture at low frequencies}, Archiv der Mathematik \textbf{112} (2019), no.~3, 305--311.
	
	\bibitem[FS23]{FS23JMP}
	Pedro Freitas and Isabel Salavessa, \emph{Families of non-tiling domains
		satisfying P{\'o}lya's Conjecture}, Journal of Mathematical Physics
	\textbf{64} (2023), no.~12.
	
	\bibitem[GJL26]{GJL26}
	Zaihui Gan, Renjin Jiang, and Fanghua Lin, \emph{The improved Berezin-Li-Yau
		inequality and Kr{\"o}ger inequality and consequences}, Science China Mathematics (2026).
	
	\bibitem[GMWZ]{jingweiimprovement}
	Jingwei Guo, Changxing Miao, Weiwei Wang, and Guoqing Zhan, \emph{Improvement
		of P\'olya's Conjecture for balls and cylinders}, Science China Mathematics 
	(to appear).
	
	\bibitem[GRS12]{GRS12}Nick Gravin, Sinai Robins, and Dmitry Shiryaev, \emph{Translational tilings by a polytope, with multiplicity}, Combinatorica \textbf{32} (2012), no.~6, 629--648.
	
	\bibitem[Gri02]{Grieser2002}
	Daniel Grieser, \emph{Uniform bounds for eigenfunctions of the Laplacian on
		manifolds with boundary}, Communications in Partial Differential Equations
	\textbf{27} (2002), no.~7--8, 1283--1299.
	
	\bibitem[HS18]{HarrellStubbe2018}
	Evans~M Harrell~II and Joachim Stubbe, \emph{Two-term, asymptotically sharp
		estimates for eigenvalue means of the Laplacian}, Journal of Spectral Theory
	\textbf{8} (2018), no.~4, 1529–-1550.
	
	\bibitem[HW]{HW24}
	Xiang He and Zuoqin Wang, \emph{P\'olya's Conjecture for thin products},
	International Mathematics Research Notices (to appear).
	
	\bibitem[Ivr80]{Ivrii80}
	Victor~Ya. Ivrii, \emph{Second term of the spectral asymptotic expansion of the
		Laplace--Beltrami operator on manifolds with boundary}, Functional Analysis
	and Its Applications \textbf{14} (1980), no.~2, 98--106.
	
	\bibitem[JL26]{JL25}
	Renjin Jiang and Fanghua Lin, \emph{P{\'o}lya's Conjecture up to $epsilon$-loss
		and quantitative estimates for the remainder of Weyl's law}, Communications
	on Pure and Applied Mathematics (2026), e70058.
	
	\bibitem[Kab20]{Kab20}
	Yoshitsugu Kabeya, \emph{Eigenvalues of the Laplace-Beltrami operator under the
		homogeneous Neumann condition on a large zonal domain in the unit sphere.},
	Discrete \& Continuous Dynamical Systems: Series A \textbf{40} (2020), no.~6.
	
	\bibitem[Kel66]{Kel66}
	R. Kellner, \emph{On a theorem of P\'olya}. Amer. Math. Monthly, 1966, 73(8): 856--858.
	
	\bibitem[Lap97]{AL97}
	Alekse{\u\i} Laptev, \emph{Dirichlet and Neumann eigenvalue problems on domains
		in Euclidean spaces}, Journal of Functional Analysis \textbf{151} (1997),
	no.~2, 531--545.
	
	\bibitem[LdC12]{LM12}
	Massimo Lanza~de Cristoforis, \emph{Simple Neumann eigenvalues for the Laplace
		operator in a domain with a small hole: A functional analytic approach},
	Revista matem{\'a}tica complutense \textbf{25} (2012), no.~2, 369--412.
	
	\bibitem[Li26]{li2026p}
	Yutian Li, \emph{P\'olya's Conjecture for the Neumann Laplacian on Euclidean
		balls}, arXiv preprint arXiv:2607.25958, 2026.
	
	\bibitem[Lin17]{lin17}
	Fanghua Lin, \emph{Extremum problems of Laplacian eigenvalues and generalized
		P\'olya Conjecture}, Chinese Annals of Mathematics, Series B \textbf{38}
	(2017), no.~2, 497--512.
	
	\bibitem[LNS11]{LNS11}
	Antoine Laurain, Sergei~A Nazarov, and Jan Sokolowski, \emph{Singular
		perturbations of curved boundaries in three dimensions. the spectrum of the
		Neumann Laplacian}, Zeitschrift f{\"u}r Analysis und ihre Anwendungen
	\textbf{30} (2011), no.~2, 145--180.
	
	\bibitem[LP24]{LavoiePoliquin2024}
	Guillaume Lavoie and Guillaume Poliquin, \emph{Growth rates of Laplace
		eigenfunctions on the unit disk}, Annales Mathématiques du Québec
	\textbf{48} (2024), no.~2, 407--425.
	
	\bibitem[Nem17]{Nemes2017}
	Gerg\"o Nemes, \emph{Error bounds for the large-argument asymptotic expansions
		of the hankel and Bessel functions}, Acta Applicandae Mathematicae
	\textbf{150} (2017), no.~1, 141--177.
	
	
	\bibitem[NS05]{NS05}
	Yu~Netrusov and Yu~Safarov, \emph{Weyl asymptotic formula for the Laplacian on
		domains with rough boundaries}, Communications in mathematical physics
	\textbf{253} (2005), no.~2, 481--509.
	
	\bibitem[P{\'o}l54]{Pol54}
	George P{\'o}lya, \emph{Patterns of plausible inference: mathematics and
		plausible reasoning}, Princeton University Press, 1954.
	
	\bibitem[P{\'o}l61]{Pol61}
	\bysame, \emph{On the eigenvalues of vibrating membranes}, Proc. London Math.
	Soc \textbf{11} (1961), no.~3, 419--433.
	
	\bibitem[RT75]{RT75}
	Jeffrey Rauch and Michael Taylor, \emph{Potential and scattering theory on
		wildly perturbed domains}, Journal of Functional Analysis \textbf{18} (1975),
	no.~1, 27--59.

\bibitem[SV97]{SafarovVassiliev97}
	Yuri Safarov and Dmitri Vassiliev, \emph{The asymptotic distribution of
		eigenvalues of partial differential operators}, Translations of Mathematical
	Monographs, vol. 155, American Mathematical Society, Providence, RI, 1997.
	
	\bibitem[SF10]{SafarovFilonov2010}Y.~G. Safarov and N.~D. Filonov, \emph{Asymptotic estimates of the difference between the Dirichlet and Neumann counting functions}, Funct. Anal. Appl. 44 (2010), 286--294.
	
	
	
	\bibitem[See78]{See1}
	R~Seeley, \emph{A sharp asymptotic remainder estimate for the eigenvalues of
		the Laplacian in a domain of $\mathbb R^3$}, Advances in Mathematics \textbf{29} (1978),
	no.~2, 244--269.
	
	\bibitem[See80]{See2}
	\bysame, \emph{An estimate near the boundary for the spectral function of the
		Laplace operator}, American Journal of Mathematics \textbf{102} (1980),
	no.~5, 869--902.
	
	\bibitem[Sog02]{Sogge2002}
	Christopher~D. Sogge, \emph{Eigenfunction and {Bochner--Riesz} estimates on
		manifolds with boundary}, Mathematical Research Letters \textbf{9} (2002),
	no.~2--3, 205--216.
	
	\bibitem[SS07]{SmithSogge2007}
	Hart~F. Smith and Christopher~D. Sogge, \emph{On the {$L^p$} norm of spectral
		clusters for compact manifolds with boundary}, Acta Mathematica \textbf{198}
	(2007), no.~1, 107--153.
	
	
	
	\bibitem[Vas86]{Vassiliev1986}D.~Vassiliev, \emph{Two-term asymptotics of the spectrum of a boundary value problem in the case of a piecewise smooth boundary}, Soviet Math. Dokl. 33 (1986), 227--230; Russian original: Dokl. Akad. Nauk SSSR 286 (1986), 1043--1046.
	
	\bibitem[Zel08]{Zelditch2008}
	Steve Zelditch, \emph{Local and global analysis of eigenfunctions on Riemannian
		manifolds}, Handbook of Geometric Analysis, No. 1, Advanced Lectures in
	Mathematics, vol.~7, International Press, Somerville, MA, 2008, pp.~545--658.
	
\end{thebibliography}
\providecommand{\bysame}{\leavevmode\hbox to3em{\hrulefill}\thinspace}
\providecommand{\MR}{\relax\ifhmode\unskip\space\fi MR }
\providecommand{\MRhref}[2]{%
	\href{http://www.ams.org/mathscinet-getitem?mr=#1}{#2}
}
\providecommand{\href}[2]{#2}

\end{document}